\documentclass[11pt]{amsart}

\usepackage{amssymb,amsmath,amsthm,amsfonts}
\usepackage{graphicx}
\usepackage{bbm}
\usepackage{hyperref}
\usepackage{tikz}
\usepackage{tikz-cd}
\usepackage[utf8]{inputenc}
\usepackage{mathtools}
\usepackage{yhmath}
\usepackage{theoremref}
\usepackage{cite}
\usepackage{comment}
\usepackage[margin=1in]{geometry}
\usepackage{enumerate}

\newtheorem{introthm}{Theorem}

\newtheorem{introcol}[introthm]{Corollary}
\newtheorem{introprop}[introthm]{Proposition}

\newtheorem{thm}{Theorem}[section]
\newtheorem{lemma}[thm]{Lemma}
\newtheorem{col}[thm]{Corollary}
\newtheorem{prop}[thm]{Proposition}
\newtheorem{fact}[thm]{Fact}

\theoremstyle{definition}
\newtheorem{defn}[thm]{Definition}

\theoremstyle{remark}
\newtheorem{ex}[thm]{Example}
\newtheorem{rmk}[thm]{Remark}

\DeclareMathOperator{\re}{Re}
\DeclareMathOperator{\im}{Im}
\DeclareMathOperator{\id}{id}
\DeclareMathOperator{\tr}{tr}
\DeclareMathOperator{\sa}{sa}
\DeclareMathOperator{\Spec}{Spec}
\DeclareMathOperator{\ev}{ev}
\DeclareMathOperator{\Th}{Th}
\DeclareMathOperator{\Leb}{Leb}

\newcommand{\cU}{\mathcal{U}}
\newcommand{\cV}{\mathcal{V}}

\newcommand{\N}{\mathbb{N}}
\newcommand{\C}{\mathbb{C}}
\newcommand{\R}{\mathbb{R}}
\newcommand{\Q}{\mathbb{Q}}
\newcommand{\M}{\mathbb{M}}

\DeclarePairedDelimiter{\norm}{\lVert}{\rVert}

\begin{document}
	
	\title[Elementary equivalence and disintegration]{Elementary equivalence and disintegration of tracial von Neumann algebras}
	
	\author{David Gao}
	\address{Department of Mathematical Sciences, UCSD, 9500 Gilman Dr, La Jolla, CA 92092, USA}
	\email{weg002@ucsd.edu}
	\urladdr{https://sites.google.com/ucsd.edu/david-gao/home}
	
	\author{David Jekel}
	\address{Department of Mathematical Sciences, University of Copenhagen, Universitetsparken 5, 2100 Copenhagen {\O}, Denmark}
	\email{daj@math.ku.dk}
	\urladdr{https://davidjekel.com}
	
	\begin{abstract}
		We prove an analog of the disintegration theorem for tracial von Neumann algebras in the setting of elementary equivalence rather than isomorphism, showing that elementary equivalence of two direct integrals of tracial factors implies fiberwise elementary equivalence under mild, and necessary, hypotheses.  This verifies a conjecture of Farah and Ghasemi.  Our argument uses a continuous analog of ultraproducts where an ultrafilter on a discrete index set is replaced by a character on a commutative von Neumann algebra, which is closely related to Keisler randomizations of metric structures.  We extend several essential results on ultraproducts, such as {\L}o{\'s}'s theorem and countable saturation, to this more general setting.
	\end{abstract}
	
	\maketitle
	
	\section{Introduction}
	
	\subsection{Motivation}
	
	Our results are motivated by the problem of classifying tracial von Neumann algebras up to elementary equivalence.  Elementary equivalence is a central notion in model theory, which means that two objects have the same first-order theory.  An equivalent characterization that is more familiar to operator algebraists follows from the Keisler-Shelah theorem:  Two tracial von Neumann algebras $M$ and $N$ are elementarily equivalent (written $M \equiv N$) if and only if there is some ultrafilter $\cU$ on some index set such that the ultrapowers $M^{\cU}$ and $N^{\cU}$ are isomorphic.
	
	Von Neumann \cite{vonNeumann1949reduction} showed that a von Neumann algebra with separable predual can be decomposed as a direct integral $M = \int_\Omega^{\oplus} M_\omega\,d\omega$ over some measure space $\Omega$ such that the center $Z(M) \subseteq M$ agrees with $L^\infty(\Omega) \subseteq \int_\Omega^{\oplus} M_\omega\,d\omega$, and hence reduced the classification of separable von Neumann algebras to that of factors.  In the case where the von Neumann algebra admits a faithful normal tracial state, the factors are either matrix algebras or $\mathrm{II}_1$ factors.  The classification of factors, even in the $\mathrm{II}_1$ case, is an extremely challenging task.  While Murray and von Neumann distinguished group von Neumann algebras $L(G)$ for free groups versus amenable groups, it was not until the work of McDuff that infinitely many non-isomorphic $\mathrm{II}_1$ factors were shown to exist \cite{McD69a,McD69b}.  Recent breakthroughs have classified von Neumann algebras associated to large classes of groups using deformation/rigidity techniques (see e.g.~\cite{IPV2013,CIOS2023}), but are still very far from general $\mathrm{II}_1$ factors.
	
	Through the study of ultrapowers and later the introduction (officially) of continuous model theory for von Neumann algebras in \cite{FHS2014a}, there was growing interest in classification of von Neumann algebras up to elementary equivalence, and many of the developments paralleled the early history of classification up to isomorphism.  Farah, Hart, and Sherman showed that property Gamma is axiomatizable, and hence amenable von Neumann algebras and free group von Neumann algebras are not elementarily equivalent \cite[\S 3.2.2]{FHS2014b}; preservation of Gamma under elementary equivalence also follows from the earlier work of Fang, Ge, and Li \cite[Corollary 5.2]{FGL2006}.  Boutonnet, Chifan, and Ioana  showed that McDuff's family of $\mathrm{II}_1$ factors are not elementarily equivalent \cite{BCI17}; see also \cite{GH2017}.  In the non-Gamma setting, the first examples proved to be non-elementarily equivalent were given in \cite{CIKE23}, and further examples were obtained in \cite{KEP2023}.
	
	However, the more fundamental question about the analog of von Neumann's disintegration theorem for elementary equivalence was not fully addressed (and similarly, there is not much written about classification of tracial von Neumann algebras with nontrivial center in practice, except for \cite{CFQH2024}).  Farah and Ghasemi showed, in general, that the theory of a direct integral is uniquely determined by the theories of the integrands \cite[Corollary 3.8]{FG24} (see also \cite[Theorem 18.3]{BYIT2024}).  In the case where the integral is over an atomless probability space, this also follows from the work of Ben Yaacov on randomizations of metric structures \cite{BY13}.
	
	Our goal in this paper is to establish a converse recovering the theory of the integrands from the theory of the integral for tracial von Neumann algebras, which was conjectured by Farah and Ghasemi in an earlier version of \cite{FG24}.  In other words, we will show that the integrands in the direct integral can be recovered up to elementary equivalence if the integral is known up to elementary equivalence.  After the first draft of this paper appeared, Ben Yaacov, Ibarluc{\'i}a, and Tsankov added a general version of this result to their systematic treatment of direct integrals, affine logic, and continuous logic; see \cite[Theorem 20.13]{BYIT2024}.
	
	\subsection{Results}
	
	We require a slight technical hypothesis to rule out different multiplicities of the same factor occurring in the diffuse part of the measure space in the direct integral decomposition.  For example, suppose that $(\Omega,\mu)$ is diffuse (i.e.\ has no atoms) and $(M_\omega)_{\omega \in \Omega}$ are a measurable field of pairwise non-elementarily equivalent tracial factors.  Let $(\tilde{\Omega},\tilde{\mu}) = (\Omega \sqcup \Omega,\frac{1}{2} \mu \oplus \frac{1}{2} \mu)$, and consider the measurable field $(N_\omega)_{\omega \in \tilde{\Omega}}$ on $\tilde{\Omega}$ consisting of two copies of $(M_\omega)_{\omega \in \Omega}$.  Then it follows from \cite[Corollary 3.8]{FG24} that $\int_\Omega (M_\omega,\tau_\omega) \,d\mu(\omega)$ is elementarily equivalent to $\int_{\tilde{\Omega}} N_\omega\,d\tilde{\mu}(\omega) \cong \int_\Omega (M_\omega \oplus M_\omega,\frac{1}{2} \tau_\omega \oplus \frac{1}{2} \tau_\omega)\,d\mu(\omega)$, but there is no measurable isomorphism $(\Omega,\mu) \to (\Omega \sqcup \Omega,\frac{1}{2} \mu \oplus \frac{1}{2} \mu)$ exhibiting elementary equivalence fiberwise because in $\Omega$ all the elementary equivalence classes are distinct but in $\tilde{\Omega}$ there are two copies of each elementary equivalence class.  Hence, there is an unavoidable ambiguity due to the different possibly multiplicities of each elementary equivalence class over the underlying measure space.  To rule out such examples, we add an assumption of ``infinite multiplicity'' of each fiber in the direct integral decomposition.
	
	Given a probability space $(\Omega,\mu)$ and a mapping $F$ from $\Omega$ into some space $\Omega'$, we say that that $(\Omega,\mu,F)$ is \emph{stable under tensorization with $L^\infty[0,1]$}, if there is a measurable isomorphism $\Phi: (\Omega,\mu) \to (\Omega \otimes [0,1], \mu \otimes \operatorname{Leb})$ such that $F \circ \pi_1 \circ \Phi = F$, where $\pi_1$ is the first coordinate projection (see Lemma \ref{lem: diffuse fibers} for equivalent characterizations of this property).  Note that the example in preceding paragraph created $\tilde{\Omega}$ by tensorizing $\Omega$ with a two-point space.  Of course, the measure space $\Omega$ in a direct integral decomposition can be expressed as a direct sum of an atomic one and an atomless (diffuse) one, and we only want to assume stability under tensorization for the diffuse part.  Hence, our main result can be stated as follows; note that the converse to Theorem \ref{introthm: EE disintegration} follows from \cite[Corollary 3.8]{FG24}.  See also Remark \ref{rmk: Thm A general case} for what happens without the stability assumption.
	
	\begin{introthm} \label{introthm: EE disintegration}
		Let $(\Omega_1,\mu_1)$ and $(\Omega_2,\mu_2)$ be standard Borel probability spaces, and let $(M,\tau) = \int_{\Omega_1}^{\oplus} (M_\omega,\tau_\omega) \,d\mu_1(\omega)$ and $(N,\sigma) = \int_{\Omega_2}^{\oplus} (N_\omega,\sigma_\omega) \,d\mu_2(\omega)$ be direct integrals of separable tracial factors.  Suppose that the diffuse part of each decomposition is stable under tensorization with $L^\infty[0,1]$.  If $(M,\tau) \equiv (N,\sigma)$, then there exists an isomorphism of probability spaces (i.e.\ a measure-preserving map with measurable inverse up to null sets) $f: \Omega_1 \to \Omega_2$ such that $(M_\omega, \tau_\omega) \equiv (N_{f(\omega)},\tau_{f(\omega)})$ for almost every $\omega \in \Omega_1$.
	\end{introthm}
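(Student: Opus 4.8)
The plan is to extract from $\Th(M)$ a measure-theoretic invariant recording the distribution of the fiber theories, together with the finer data of the atomic part, and then to reconstruct the measure-preserving map $f$ from equality of these invariants. Let $\mathcal{T}$ denote the compact metrizable space of complete theories of separable tracial factors, and let $T_1 \colon \Omega_1 \to \mathcal{T}$, $\omega \mapsto \Th(M_\omega)$, be the (measurable) theory map, with pushforward $\nu_1 = (T_1)_* \mu_1$; define $T_2$ and $\nu_2$ analogously. The central claim is that $\Th(M)$ determines $\nu_1$. To see this I would use the fiberwise, or conditional, semantics furnished by the character/continuous-ultraproduct machinery developed earlier: for a sentence $\varphi$, the function $\omega \mapsto \varphi^{M_\omega}$ is a central element $\hat\varphi \in Z(M) = L^\infty(\Omega_1)$, obtained by computing $\varphi$ with each quantifier $\sup_x$ (resp.\ $\inf_x$) over $M$ replaced by the lattice supremum (resp.\ infimum) in $Z(M)_{\mathrm{sa}}$ of the corresponding $Z(M)$-valued evaluations. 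The key point, which is exactly the extended {\L}o{\'s}-type statement, is that the conditional evaluation is definable: for each formula $\psi(x_1, \dots, x_n)$ the map $\bar a \mapsto \hat\psi(\bar a) \in Z(M)$ is a $Z(M)$-valued definable predicate, and in particular each sentence $\varphi$ yields a central element $\hat\varphi$. Consequently, for any continuous $g \colon \R^k \to \R$ and sentences $\varphi_1, \dots, \varphi_k$, the quantity
\[
\int_{\mathcal T} g\bigl(\varphi_1(t), \dots, \varphi_k(t)\bigr) \, d\nu_1(t) = \int_{\Omega_1} g\bigl(\varphi_1^{M_\omega}, \dots, \varphi_k^{M_\omega}\bigr)\, d\mu_1(\omega) = \tau\bigl(g(\hat\varphi_1, \dots, \hat\varphi_k)\bigr)
\]
is an invariant of $\Th(M)$. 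Since such functions are dense in $C(\mathcal T)$ and $\mathcal T$ is compact metrizable, these integrals pin down $\nu_1$, so $M \equiv N$ forces $\nu_1 = \nu_2$.

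Next I would recover the atomic part. The atoms of $\mu_1$ correspond exactly to the minimal projections of the measure algebra $Z(M)$, that is, to central projections $z$ with $\tau(z) > 0$ for which $zMz$ is a factor (were $z$ to split centrally, $zMz$ would have nontrivial center). The conditions ``$z$ is a central projection with $\tau(z) = p$'' and ``$zMz$ is a factor whose relativized theory realizes prescribed sentence values'' are expressible by relativizing quantifiers to $zMz$ with trace $\tau(\cdot)/\tau(z)$, so $\Th(M)$ recovers the multiset $\{(p_a, t_a)\}_a$ of pairs (atom mass, fiber theory), and with it the atomic measure $\nu_1^{\mathrm{at}} = \sum_a p_a \, \delta_{t_a}$ and the diffuse part $\nu_1^{\mathrm{diff}} = \nu_1 - \nu_1^{\mathrm{at}}$. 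Thus $M \equiv N$ additionally forces the atomic multisets to agree and $\nu_1^{\mathrm{diff}} = \nu_2^{\mathrm{diff}}$.

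Finally I would assemble $f$ from these matching invariants. On the atomic parts, equality of the multisets $\{(p_a, t_a)\}$ yields a mass-preserving bijection of the atoms of $\mu_1$ with those of $\mu_2$ carrying each fiber to an elementarily equivalent one. On the diffuse parts, the stability hypothesis enters through Lemma \ref{lem: diffuse fibers}: it guarantees that, after disintegrating $\mu_i^{\mathrm{diff}}$ over $\nu_i^{\mathrm{diff}}$ along $T_i$, almost every fiber is atomless, whence $(\Omega_i^{\mathrm{diff}}, \mu_i^{\mathrm{diff}}, T_i) \cong (\mathcal T \times [0,1], \nu_i^{\mathrm{diff}} \otimes \Leb, \pi_1)$ depends only on $\nu_i^{\mathrm{diff}}$. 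Since $\nu_1^{\mathrm{diff}} = \nu_2^{\mathrm{diff}}$, this produces a measure isomorphism of the diffuse parts intertwining $T_1$ and $T_2$, and gluing the two pieces gives the desired $f$ with $(M_\omega, \tau_\omega) \equiv (N_{f(\omega)}, \sigma_{f(\omega)})$ almost everywhere.

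I expect the main obstacle to be the first step: making precise the sense in which the fiberwise semantics $\varphi \mapsto \hat\varphi$ is definable relative to the center, so that the distribution $\nu_1$ of fiber theories becomes a genuine invariant of $\Th(M)$ rather than merely of the isomorphism class of $M$. This is precisely where the replacement of ultrafilters by characters on $Z(M)$ and the corresponding {\L}o{\'s}'s theorem and saturation results are needed, and where care is required to handle the interaction of quantifiers over measurable fields with integration over $\Omega_1$.
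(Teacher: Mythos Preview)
Your proposal is correct and follows the same overall architecture as the paper: extract the distribution of fiber theories as an elementary invariant, handle the atomic part separately, and invoke the stability hypothesis via Lemma~\ref{lem: diffuse fibers} to build $f$ on the diffuse part. The genuine difference lies in how you establish invariance of the distribution $\nu_1$ under elementary equivalence. The paper proceeds through Keisler--Shelah: $M\equiv N$ yields $M^{\cU}\cong N^{\cU}$, and Proposition~\ref{prop: distribution ultrapower} (built on the ultrafiber {\L}o{\'s} theorem and Proposition~\ref{prop: tensor formula}) shows the distribution of theories is unchanged by passing to an ultrapower. You instead argue directly that $\tau\bigl(g(\hat\varphi_1,\dots,\hat\varphi_k)\bigr)$ is computable from $\Th(M)$ because the center and the center-valued interpretation are definable. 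The paper explicitly mentions this route in the introduction as a viable alternative (via Dixmier averaging and Ben~Yaacov's randomization framework) but opts for the ultrapower argument to minimize model-theoretic overhead. Your approach gives a more intrinsic explanation of why $\nu_1$ is an elementary invariant, at the cost of having to make the definability claim precise---which you rightly identify as the main obstacle. The paper's approach converts the problem into one about \emph{isomorphic} ultrapowers, which in particular reduces the atomic-part matching to the one-line observation that minimal central projections in $M^{\cU}\cong N^{\cU}$ must correspond, avoiding the relativization-to-$zMz$ definability arguments you sketch. One small correction: saturation plays no role here; it appears only in \S\ref{sec: saturation} for a separate result.
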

	
	To give a simple example, we can obtain continuum many non-elementarily equivalent tracial von Neumann algebras with diffuse center as follows.  Let $P$ and $Q$ be two non-elementarily equivalent $\mathrm{II}_1$ factors.  Let $M_c$ be the direct integral over $[0,1]$ where the fibers on $[0,c]$ are $P$ and the fibers on $(c,1]$ are $Q$.  Theorem \ref{introthm: EE disintegration} shows that $M_c$ and $M_{c'}$ are not elementarily equivalent for $c \neq c'$.
	
	Our approach is based on constructing the ``ultrafibers'' out of a direct integral $(M,\tau) = \int_{\Omega}^{\oplus} (M_\omega,\tau_\omega) \,d\mu(\omega)$ associated to characters on $L^\infty(\Omega)$.  While it is not possible to obtain the fiber $(M_\omega,\tau_\omega)$ canonically as a $\mathrm{C}^*$-quotient of $M$ since a point evaluation is not necessarily well-defined on $L^\infty(\Omega)$, we can obtain a $\mathrm{C}^*$-quotient corresponding to each character on $L^\infty(\Omega)$.  If $\Omega$ was a discrete measurable space $I$, so that $L^\infty(\Omega) = \ell^\infty(I)$ for the index set $I$, then a character would always be given by an ultrafilter on $I$, and then our ultrafiber would reduce to the ultraproduct of $(M_i,\tau_i)_{i \in I}$ with respect to this ultrafilter.  As we will see, there is no need to assume that $M$ is a direct integral in the classical sense; we can replace $L^\infty(\Omega)$ by any central von Neumann subalgebra $N$ of a tracial von Neumann algebra $M$, and so obtain an ultrafiber of $M$ over $N$ for any character on $N$ (see \S \ref{subsec: ultrafiber}). 
	The possibility of this construction was already implicit in Wright's 1954 work on the quotients of $\mathrm{AW}^*$-algebras by maximal ideals \cite{Wri54}.  See also \cite[\S A.3]{SS08}.  The ultrafibers studied here also closely relate with Keisler and Ben Yaacov's work on randomizations of structures \cite{Keisler1999,BY13}.  Indeed, on careful inspection, the ultrafibers are exactly the fibers in the canonical realization of \cite[Theorem 3.11]{BY13}.  Hence, our work gives a more operator-algebraic approach to randomizations for tracial von Neumann algebras.
	
	Note in addition that \cite{BYIT2024} developed direct integrals in the non-separable setting more generally by relating affine logic with continuous logic, and in particular showed the existence of direct integral decompositions for arbitrary (including non-separable) tracial von Neumann algebras \cite[Corollary 29.10]{BYIT2024}.  See \cite[\S 9]{BYIT2024} for a discussion of combining direct integrals and ultraproducts.  See also \cite[\S 20]{BYIT2024} for a general version of the result on unique distribution of theories, which works for affine theories that are \emph{simplicial}.  As our goal is to present these results for an operator algebraic audience, we focus here on the separable case.
	
	Although here we argue using ultrafibers in order to minimize the model theory background needed to understand the proof, we also point out another, more model-theoretic way to prove Theorem \ref{introthm: EE disintegration} in the diffuse case using Ben Yaacov's disintegration theorem \cite[Theorem 3.32]{BY13}.  Ben Yaacov's theorem deals with a two-sorted structure with an auxiliary sort representing the probability space used in the randomization (for instance, the space $(\Omega,\mu)$ in a direct integral decomposition). 
	Thus, to apply this result, one seeks to upgrade the plain structure of a tracial von Neumann algebra into a structure with an auxiliary sort for the center, which is a commutative tracial von Neumann algebra and hence corresponds to a probability space.  The key point for this approach (which distinguishes tracial von Neumann algebras from certain other metric structures) is that the center is a definable set and the center-valued trace is a definable function, which follows from Dixmier averaging as in \cite[Lemma 4.2]{FHS13}. Hence, the new upgraded structure is definable in terms of the plain tracial von Neumann algebra structure.  From this one can deduce that the center-valued interpretations of formulas are definable functions by similar reasoning as \cite[Lemma 3.13]{BY13}.  We also remark that the disintegration result in \cite[Theorem 20.13]{BYIT2024} holds in greater generality, including situations where the fiberwise interpretations of formulas cannot be expressed as definable functions with respect to the original language in any sense; such definability is thus a stronger conclusion that requires stronger hypotheses.
	
	The special case of Theorem \ref{introthm: EE disintegration} when all the fibers are the same, so that the direct integral over $\Omega$ reduces to $L^\infty(\Omega,\mu) \overline{\otimes} M$, is of particular note.
	
	\begin{introcol} \label{introcol: Ozawa's question EE}
		Let $(M,\tau_M)$ and $(N,\tau_N)$ be $\mathrm{II}_1$ factors, and let $(\Omega,\mu)$ be a standard Borel probability space.  If $L^\infty(\Omega,\mu) 
		\overline{\otimes} (M,\tau_M) \equiv L^\infty(\Omega,\mu) \overline{\otimes} (N,\tau_N)$, then $(M,\tau_M) \equiv (N,\tau_N)$.
	\end{introcol}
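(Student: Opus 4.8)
The plan is to read the corollary off from Theorem \ref{introthm: EE disintegration}, the only real content being that the stability hypothesis becomes automatic when the field of fibers is constant. First I would recognize $L^\infty(\Omega,\mu) \overline{\otimes} (M,\tau_M)$ as the direct integral $\int_\Omega^{\oplus}(M_\omega,\tau_\omega)\,d\mu(\omega)$ of the constant measurable field $(M_\omega,\tau_\omega) = (M,\tau_M)$, whose center is exactly $L^\infty(\Omega,\mu)$, and similarly write $L^\infty(\Omega,\mu)\overline{\otimes}(N,\tau_N) = \int_\Omega^{\oplus}(N_\omega,\sigma_\omega)\,d\mu(\omega)$ with $(N_\omega,\sigma_\omega) = (N,\tau_N)$. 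Since $M$ and $N$ are $\mathrm{II}_1$ factors they are in particular separable tracial factors, so both sides are direct integrals of the kind Theorem \ref{introthm: EE disintegration} treats, taken over the same standard Borel probability space $(\Omega,\mu)$, and the associated classifying map $F$ (recording the elementary equivalence class of the fiber) is constant in each case.

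Next I would check the stability hypothesis. Decomposing $(\Omega,\mu)$ into its atomic and diffuse parts, the diffuse part $\Omega_d$ is up to isomorphism of probability spaces a (scaled) interval, and hence admits a measurable isomorphism $\Phi\colon \Omega_d \to \Omega_d \otimes [0,1]$ onto its product with $([0,1],\operatorname{Leb})$, both being diffuse standard probability spaces of equal total mass. Because $F$ is constant, the compatibility condition $F \circ \pi_1 \circ \Phi = F$ holds trivially, so the diffuse part of each decomposition is stable under tensorization with $L^\infty[0,1]$ with no further work. (If $\Omega$ is purely atomic the hypothesis is vacuous.)

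With these hypotheses verified and the assumed elementary equivalence $L^\infty(\Omega,\mu)\overline{\otimes}(M,\tau_M) \equiv L^\infty(\Omega,\mu)\overline{\otimes}(N,\tau_N)$ in hand, Theorem \ref{introthm: EE disintegration} supplies an isomorphism of probability spaces $f\colon \Omega \to \Omega$ with $(M_\omega,\tau_\omega) \equiv (N_{f(\omega)},\sigma_{f(\omega)})$ for almost every $\omega$. As both fields are constant this says precisely $(M,\tau_M) \equiv (N,\tau_N)$, completing the proof. The sole step I expect to require any thought — and it is a short one — is the observation that constancy of $F$ makes the stability assumption free; once that is noted, the corollary is an immediate specialization of Theorem \ref{introthm: EE disintegration}.
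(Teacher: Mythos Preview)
Your approach is correct in spirit and would work cleanly when $M$ and $N$ are separable, but it takes a genuinely different route from the paper. The paper does \emph{not} derive Corollary~\ref{introcol: Ozawa's question EE} from Theorem~\ref{introthm: EE disintegration}; instead it gives an independent, more algebraic proof (stated as Proposition~\ref{prop: general Ozawa's question EE}) that uses only the ultrafiber/generalized ultrapower construction and Proposition~\ref{introprop: ultraproduct EE}. Concretely: Keisler--Shelah gives an isomorphism $(A\overline{\otimes}M)^\cU \cong (B\overline{\otimes}N)^\cU$; since the centers are $A^\cU$ and $B^\cU$, one then picks any character $\cV$ on this common center and shows that the resulting ultrafiber of $(A\overline{\otimes}M)^\cU$ is itself a generalized ultrapower of $M$, and similarly for $N$. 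Thus $M$ and $N$ have isomorphic generalized ultrapowers, and Proposition~\ref{introprop: ultraproduct EE} gives $M\equiv N$. This argument avoids the measure-theoretic machinery (distributions of theories, Rokhlin disintegration) underlying Theorem~\ref{introthm: EE disintegration}, and it works for arbitrary diffuse commutative $A,B$, not just $L^\infty(\Omega,\mu)$ on both sides.

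One small gap in your write-up: you assert that $\mathrm{II}_1$ factors are ``in particular separable,'' but this is false in general, and Theorem~\ref{introthm: EE disintegration} requires separable fibers. So your deduction literally only covers the separable case, whereas the paper's direct argument has no such restriction. If you want to keep your strategy, you should either add separability as a hypothesis or note that the non-separable case needs a different argument.
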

	
	This is related to the problem, which has become known as Ozawa's question due a discussion on MathOverflow, of whether $L^\infty(\Omega,\mu) \overline{\otimes} M  \cong L^\infty(\Omega,\mu) \overline{\otimes} N$ implies $M \cong N$ in general (this is an exercise if we assume that $M$ and $N$ are separable); our result solves the analogous question for elementary equivalence.  In fact, we are able to give a simpler and more ``algebraic'' proof of Corollary \ref{introcol: Ozawa's question EE} than the general case.  In other words, we will be able to prove it from $\mathrm{C}^*$-algebraic manipulations of our generalized ultraproducts once we know that the Keisler--Shelah characterization of elementary equivalence works for generalized ultrapowers (see Proposition \ref{introprop: ultraproduct EE} below).
	
	Now let us briefly describe the generalized ultraproduct construction in the case of ultrapowers.  It is well-known that ultrafilters on a (discrete) index set $I$ are equivalent to pure states on $\ell^\infty(I)$.  From this point of view, there is no need to restrict $I$ to be a discrete measurable space.  Thus, for a measure space $(\Omega,\mu)$, we can consider a pure state $\mathcal{U}$ on $L^\infty(\Omega,\mu)$ as an ``ultrafilter on a measure space.''  Given a tracial von Neumann algebra $(M,\tau)$, the ultrapower $M^{\cU}$ can then be defined in an analogous way.  Take the tracial state $\cU \circ (\id \otimes \tau)$ on the $L^\infty(\Omega,\mu) \overline{\otimes} M$, and let $M^{\cU}$ be the algebra generated by the GNS construction of this trace, which is a $\mathrm{C}^*$-quotient of $L^\infty(\Omega,\mu) \overline{\otimes} M$.  This $M^{\cU}$ is a special case of the ultrafiber construction described above, namely it is the ultrafiber of $L^\infty(\Omega,\mu) \overline{\otimes} M$ associated to the character $\cU$ on $N = L^\infty(\Omega,\mu)$.  To give a short proof of Corollary \ref{introcol: Ozawa's question EE} we only need the following fact, which allows us to extend the Keisler-Shelah characterization of elementary equivalence to the setting of ultrapowers over measure spaces.
	
	\begin{introprop} \label{introprop: ultraproduct EE}
		Let $\cU$ be a pure state on $L^\infty(\Omega,\mu)$ for some measure space, and let $(M,\tau)$ be a tracial von Neumann algebra.  Then $M^{\cU} \equiv M$.
	\end{introprop}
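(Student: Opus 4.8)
The plan is to deduce the proposition from a generalized form of {\L}o{\'s}'s theorem for the ultrafiber $M^{\cU}$, of which it is the special case of constant fields (equivalently, of sentences). Identify $L^\infty(\Omega,\mu)$ with $C(K)$ for its Gelfand spectrum $K$, so that the pure state $\cU$ is evaluation $\ev_p$ at a point $p \in K$ and is in particular a character (multiplicative). Represent elements of $L^\infty(\Omega,\mu)\overline{\otimes} M$ by essentially bounded measurable fields $\omega \mapsto b(\omega) \in M$, and write $[b]$ for the image of $b$ in the $\mathrm{C}^*$-quotient $M^{\cU}$. The heart of the matter is the identity
\[
\varphi^{M^{\cU}}([b_1],\dots,[b_n]) = \cU\bigl(\omega \mapsto \varphi^M(b_1(\omega),\dots,b_n(\omega))\bigr),
\]
valid for every formula $\varphi(x_1,\dots,x_n)$ and all fields $b_1,\dots,b_n$, where the inner function is first shown to define an element of $L^\infty(\Omega,\mu)$ on which the character $\cU$ acts. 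Granting this, I take the $b_i = 1 \otimes m_i$ to be constant fields (so $[b_i]$ is the image of $m_i$ under the diagonal inclusion $\iota\colon M \to M^{\cU}$): then $\omega \mapsto \varphi^M(\bar m)$ is the constant $\varphi^M(\bar m)\cdot 1$, and since $\cU$ is unital, $\varphi^{M^{\cU}}(\iota(\bar m)) = \varphi^M(\bar m)$. Applying this to sentences shows $\Th(M^{\cU}) = \Th(M)$, that is, $M^{\cU} \equiv M$ (and more precisely $\iota$ is an elementary embedding).

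I would prove the displayed identity by induction on the complexity of $\varphi$, tracking at each stage that $\omega \mapsto \varphi^M(\bar b(\omega))$ is measurable and essentially bounded. For atomic formulas --- $*$-polynomials composed with the trace --- the identity is the defining property of the trace $\cU \circ (\id \otimes \tau)$ on $M^{\cU}$ together with the fact that $\id\otimes\tau$ acts fiberwise, and it holds for any state $\cU$. The connective step is where purity is essential: for a continuous connective $h$ one needs $\cU(h(f_1,\dots,f_k)) = h(\cU(f_1),\dots,\cU(f_k))$, which holds precisely because $\cU = \ev_p$ is a character on $C(K)$; for a non-multiplicative state this fails (already for $h = |\cdot|$, by convexity), which is exactly why general states do not yield elementarily equivalent fibers. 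For the quantifier step, writing $g(\omega) = \sup_{\|y\|\le r}\psi^M(\bar b(\omega),y)$, the inequality $\varphi^{M^{\cU}}([\bar b]) \le \cU(g)$ follows from the inductive hypothesis and positivity of $\cU$, since $\psi^M(\bar b(\omega), c(\omega)) \le g(\omega)$ pointwise for any field $c$; the reverse inequality requires producing, for each $\varepsilon > 0$, a measurable field $c$ with $\psi^M(\bar b(\omega), c(\omega)) \ge g(\omega) - \varepsilon$ almost everywhere, after which the inductive hypothesis and positivity of $\cU$ give $\varphi^{M^{\cU}}([\bar b]) \ge \cU(g) - \varepsilon$.

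The main obstacle is this quantifier step: both the measurability of $g$ and the construction of the near-maximizing field $c$ are measurable-selection problems, which I would resolve using separability of $M$ (so that the supremum over the operator-norm ball may be computed over a fixed countable $\|\cdot\|_2$-dense set, making $g$ measurable as a countable supremum of measurable functions) together with a selection/exhaustion argument partitioning $\Omega$ into measurable pieces on which a fixed dense element nearly attains the fiberwise supremum. A secondary technical point, needed so that the supremum over all of $M^{\cU}$ agrees with the supremum over images of fields, is that the image of $L^\infty(\Omega,\mu)\overline{\otimes} M$ is $\|\cdot\|_2$-dense in each operator-norm ball of $M^{\cU}$ (by Kaplansky density) and that formulas are uniformly continuous in $\|\cdot\|_2$ on such balls; this lets me restrict quantification to field representatives without loss. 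The purity hypothesis, by contrast, enters only in the conceptually clean connective step, and the non-pure counterexample confirms that it cannot be dropped.
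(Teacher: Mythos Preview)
Your outline is sound in spirit and close to the paper's, but there is a real gap: you invoke separability of $M$ in the quantifier step, yet the proposition is stated for an arbitrary tracial von Neumann algebra (indeed the paper later applies it to algebras of the form $A\mathbin{\overline\otimes}M$ with $A$ large). The representation of $L^\infty(\Omega,\mu)\mathbin{\overline\otimes}M$ by pointwise-measurable $M$-valued fields, and the countable-supremum/measurable-selection arguments you propose, both depend on this separability (and implicitly on $(\Omega,\mu)$ being standard). Without it, neither the fiberwise function $\omega\mapsto\varphi^M(\bar b(\omega))$ nor the near-maximizing section has an obvious meaning.

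The paper avoids this by never passing to pointwise fields. Instead it defines an \emph{operator-valued} interpretation $\varphi_E^{\tilde M}\in A$ intrinsically (using suprema in the commutative von Neumann algebra $A$ rather than pointwise suprema), and handles the quantifier step via an abstract partition-of-unity lemma: given a bounded family $\{z_i\}$ in $A_{\sa}$ and $\varepsilon>0$, there is a projection-valued measure $\{e_j\}$ with $\|\sup_i z_i-\sum_j e_j z_{i_j}\|<\varepsilon$. From this one gets a single $y=\sum_j e_j y_j\in(\tilde M)_1$ with $\psi_E^{\tilde M}(\bar x,y)\ge\varphi_E^{\tilde M}(\bar x)-\varepsilon$ in $A_{\sa}$, replacing your measurable selection entirely. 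For the specific tensor-product situation $\tilde M=A\mathbin{\overline\otimes}M$, the identity $\varphi_{\tilde E}^{\tilde M}(1\otimes\bar x)=1\otimes\varphi^M(\bar x)$ is established first for simple tensors $\sum_j e_j\otimes m_j$ and then extended by Kaplansky density and an operator-valued uniform-continuity lemma; this is the abstract analogue of your density/Kaplansky remark, but again phrased without pointwise fields. If you restrict to separable $M$ and a standard probability space, your argument and the paper's coincide via the comparison in Lemma~\ref{direct-integral-case}.
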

	
	For Theorem \ref{introthm: EE disintegration}, we generalize this ultrapower construction by replacing $L^\infty(\Omega,\mu)$ inside $L^\infty(\Omega,\mu) \otimes M$ with a general $N \subseteq M$ such that $N$ is contained in the center of $M$ (for instance, $N$ could be $L^\infty(\Omega,\mu)$ inside a direct integral $M$).  One of the key ingredients for all our results is the following analog of {\L}o{\'s}'s theorem in this setting, which is closely related to \cite[Theorem 3.19]{BY13}.  We hope that our proof  will make the result accessible to operator algebraists.
	
	\begin{introthm}[{\L}o{\'s} theorem for ultrafibers] \label{introthm: Los}
		Let $(M,\tau)$ be a tracial von Neumann algebra, and let $N \subseteq Z(M)$.  Let $E: M \to N$ be the trace-preserving conditional expectation.  For each $n$-variate formula $\phi$, let $\phi_E^M$ be the $N$-valued interpretation of $\phi$ on elements of $M$ using $E$ in place of the trace (see \S \ref{subsec: operator valued interpretation}).
		
		Let $\cU$ be a pure state on $N$ and let $M^{/E,\cU} = M / I_{\cU}$ be the ultrafiber (see \S \ref{subsec: ultrafiber}). 
		Let $\overline{x} \in M^n$ and let $\pi_{E,\cU}: M \to M^{/E,\cU}$ be the quotient map.  Then for each formula $\phi$, we have
		\[
		\phi^{M^{/E,\cU}}(\pi_{E,\cU}(\overline{x})) = \cU[\phi_E^M(\overline{x})].
		\]
	\end{introthm}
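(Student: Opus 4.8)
The plan is to argue by induction on the complexity of the formula $\phi$, following the template of the classical {\L}o{\'s} theorem but with the ultrafilter replaced by the character $\cU$ on $N$. Throughout I write $\pi = \pi_{E,\cU}$, and recall that $M^{/E,\cU}$ carries the trace $\tilde\tau$ induced by the tracial state $\tau_\cU = \cU \circ E$ on $M$, so that $\pi$ is a trace-preserving $*$-homomorphism with $\tilde\tau(\pi(x)) = \cU(E(x))$. For the base case, an atomic formula has the form $\phi(\overline x) = \re \tau(p(\overline x))$ for a $*$-polynomial $p$, and its $N$-valued interpretation is $\phi_E^M(\overline x) = \re E(p(\overline x))$. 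Since $\pi$ is a $*$-homomorphism, $p(\pi(\overline x)) = \pi(p(\overline x))$, and applying $\tilde\tau$ gives $\phi^{M^{/E,\cU}}(\pi(\overline x)) = \re \tilde\tau(\pi(p(\overline x))) = \re \cU(E(p(\overline x))) = \cU(\phi_E^M(\overline x))$, as desired.

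For the connective case, suppose $\phi = f(\phi_1, \dots, \phi_k)$ for a continuous $f \colon \R^k \to \R$. The elements $a_i = (\phi_i)_E^M(\overline x) \in N_{\sa}$ mutually commute because $N$ is commutative, and $\phi_E^M(\overline x) = f(a_1, \dots, a_k)$ via joint continuous functional calculus. The key point is that $\cU$, being a \emph{pure} state on the commutative algebra $N$, is a character, i.e.\ evaluation at a point of $\Spec N$; restricting $\cU$ to the commutative $\mathrm{C}^*$-algebra generated by $a_1, \dots, a_k$ shows that $\cU(f(a_1, \dots, a_k)) = f(\cU(a_1), \dots, \cU(a_k))$. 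Combined with the induction hypothesis $\cU(a_i) = \phi_i^{M^{/E,\cU}}(\pi(\overline x))$, this yields the claim. This is precisely the step where purity of $\cU$ is essential.

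The quantifier case is the heart of the argument; I treat $\phi = \sup_y \psi$ (the case of $\inf$ being dual, via $-\psi$), where $y$ ranges over the domain of quantification, a fixed operator-norm ball which I denote $M_1$. By definition $\phi_E^M(\overline x) = a$, where $a = \bigvee_{y \in M_1} \psi_E^M(\overline x, y)$ is the supremum in the order-complete lattice $N_{\sa}$. Using separability of $M$ together with the $\norm{\,\cdot\,}_2$-continuity of $y \mapsto \psi_E^M(\overline x, y)$, one reduces this to a countable supremum $a = \bigvee_{k} a_k$ with $a_k = \psi_E^M(\overline x, y_k)$ for $\{y_k\}$ dense in $M_1$; since $N \cong L^\infty(Y)$, the supremum is then the pointwise essential supremum. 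For the inequality $\phi^{M^{/E,\cU}}(\pi(\overline x)) \le \cU(a)$, note that $\pi(M_1)$ is $\norm{\,\cdot\,}_2$-dense in the unit ball of $M^{/E,\cU}$ by the Kaplansky density theorem, so by continuity it suffices to take $\tilde y = \pi(y)$ with $y \in M_1$, and for each such $y$ the induction hypothesis together with positivity of $\cU$ gives $\psi^{M^{/E,\cU}}(\pi(\overline x), \pi(y)) = \cU(\psi_E^M(\overline x, y)) \le \cU(a)$.

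For the reverse inequality I will use a measurable splicing argument. Fix $\epsilon > 0$, partition $Y$ into measurable sets $Y_k$ on which $a_k > a - \epsilon$, and set $y = \sum_k \mathbbm{1}_{Y_k} y_k \in M_1$, where the central projections $\mathbbm{1}_{Y_k} \in N \subseteq Z(M)$ make this a well-defined contraction. The crucial ingredient is a \emph{locality} property of the $N$-valued interpretation: if $p \in N$ is a central projection with $p y = p y'$, then $p\,\psi_E^M(\overline x, y) = p\,\psi_E^M(\overline x, y')$. This is proved by a parallel induction (it holds for $E$ of a $*$-polynomial because $p$ is central and $E$ is an $N$-bimodule map, and it passes through connectives and quantifiers since these act fiberwise over $Y$). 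Locality gives $\mathbbm{1}_{Y_k}\psi_E^M(\overline x, y) = \mathbbm{1}_{Y_k} a_k \ge \mathbbm{1}_{Y_k}(a - \epsilon)$, and summing over $k$ yields $\psi_E^M(\overline x, y) \ge a - \epsilon$ in $N_{\sa}$. Applying $\cU$ and the induction hypothesis, $\phi^{M^{/E,\cU}}(\pi(\overline x)) \ge \psi^{M^{/E,\cU}}(\pi(\overline x), \pi(y)) = \cU(\psi_E^M(\overline x, y)) \ge \cU(a) - \epsilon$, and letting $\epsilon \to 0$ finishes the proof. I expect the main obstacle to be exactly this quantifier step: establishing the locality property of $\phi_E^M$ under central cutdowns and organizing the measurable selection so that the spliced element $y$ genuinely witnesses the supremum $\cU(a)$, which is where separability of $M$ and the centrality of $N$ are used in an essential way.
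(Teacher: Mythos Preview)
Your inductive strategy, the base and connective cases, and the key idea for the quantifier step (a locality property for $\psi_E^M$ under central cutdowns, followed by splicing a near-maximizer from pieces over a projection-valued partition of $N$) are exactly what the paper does; see Lemma~\ref{formula-partition} and Proposition~\ref{prop: maximizer}. Two points, however, need correction.

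First, and most importantly, the theorem is stated for an arbitrary tracial von Neumann algebra $(M,\tau)$ with no separability hypothesis, yet your splicing argument invokes a countable $\norm{\cdot}_2$-dense subset $\{y_k\}$ of $(M)_1$ to reduce the lattice supremum $a = \bigvee_{y} \psi_E^M(\overline x,y)$ to a countable one. This is a genuine gap in the non-separable case (which is the main case of interest here, e.g.\ when $M$ is itself an ultraproduct). The paper avoids separability entirely via Lemma~\ref{partition-of-unity}: for \emph{any} uniformly bounded family $\{z_i\}_{i\in I}$ in $N_{\sa}$ and any $\epsilon>0$, there is a PVM $\{e_j\}$ and indices $i_j$ with $\norm{\sup_i z_i - \sum_j e_j z_{i_j}} \le \epsilon$. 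This is proved on the Gelfand spectrum of $N$ using that clopen sets form a basis, and it feeds directly into your splicing step without ever choosing a dense sequence in $M$. Relatedly, the $\norm{\cdot}_2$-continuity of $y \mapsto \psi_E^M(\overline x,y)$ that you invoke is not free: in the paper's organization (Lemma~\ref{uniform-continuity}) its proof for the quantifier case already \emph{uses} Proposition~\ref{prop: maximizer}, so you would have to carry it explicitly as part of your induction hypothesis rather than cite it.

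Second, a minor misattribution: the fact that every element of the closed unit ball of $M^{/E,\cU}$ lifts to the closed unit ball of $M$ is not Kaplansky density; it is the norm-lifting property of $\mathrm{C}^*$-quotients, established here concretely in Proposition~\ref{prop: ultrafiber-well-defined}. Your use of it is correct, only the name is off.
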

	
	In \S \ref{sec: saturation}, we point out that, while these generalized ultraproducts are a useful tool, they do not necessarily produce new objects. Indeed, if we assume the continuum hypothesis, then we can show by standard techniques that each ultrafiber of a measurable field $(M_\omega,\tau_\omega)$ can be realized as a discrete ultraproduct of some sequence of elements $(M_{\omega_n},\tau_{\omega_n})$.
	
	It is natural to ask to what extent these results generalize beyond the tracial setting, that is, to von Neumann algebras with faithful normal states.  Theorem \ref{introthm: Los} certainly applies for general metric structures, hence for von Neumann algebras with states as axiomatized in \cite{Dabrowski2019}, \cite[\S 5]{AGHS2025}.  Theorem \ref{introthm: EE disintegration} relies on the fact that the center of ultraproducts is the ultraproduct of the center (see Proposition \ref{prop: center of ultrafiber} and the proof of Proposition \ref{prop: distribution ultrapower}); this will fail for type $\mathrm{III}_0$ von Neumann algebras (see \cite[Fact 8.5]{AGHS2025}).  There may also be difficulty in the type $\mathrm{II}_\infty$ setting due to the lack of axiomatizability of this class \cite[Proposition 8.3]{AGHS2025}.  However, we expect that the result will hold if the algebras and states in the direct integral decomposition are tracial or $\mathrm{III}_\lambda$ for $\lambda > 0$.
	
	We close with a word of motivation on why the notions from continuous model theory, such as formulas, types, and theories, are important for the study of operator algebras (other than for proving Theorem \ref{introthm: EE disintegration}).  Indeed, since complete theories correspond to elementary equivalence classes, which in turn can be characterized in terms of ultraproducts, one might ask what we gain from formalizing the notion of theories through sentences.  One reason is to provide a \emph{topology}; since complete theories can be expressed as characters on a certain real $\mathrm{C}^*$-algebra of sentences, the space of complete theories comes with a natural weak-$*$ topology.  This topology can be characterized by the fact that the theory of an ultraproduct $\prod_{\cU} M_i$ is the ultralimit of the theories of $M_i$.  If we look at things purely in terms of elementary equivalence classes (defined in terms of isomorphic ultrapowers), the topology could only be described in terms of sequences or nets by using the condition $\operatorname{Th}(\prod_{\cU} M_i) = \lim_{\mathcal{U}} \operatorname{Th}(M_I)$, and it is not transparent what the open sets are, why this condition defines a topology, why the space is Hausdorff, etc.  On the other hand, after defining theories properly using sentences, we obtain a compact metrizable space of complete theories, which is actually crucial for proving Theorem \ref{introthm: EE disintegration} in the diffuse case.  The key point is that $\omega \mapsto \operatorname{Th}(M_\omega)$ is a measurable map from the underlying probability space into the space of theories with its Borel $\sigma$-algebra, i.e., $\operatorname{Th}(M_\omega)$ is a random variable in the space of complete theories, and we show that the distribution of $\operatorname{Th}(M_\omega)$ is uniquely determined if $\int_\Omega M_\omega$ is given up to elementary equivalence.
	
	\subsection{Notation}
	
	We assume familiarity with $\mathrm{C}^*$-algebras and von Neumann algebras.  For background, refer to \cite{AP16,Tak79,Blackadar2006,BrownOzawa2008}.  We recall the following terminology and notation:
	\begin{itemize}
		\item A \emph{tracial von Neumann algebra} is a pair $(M,\tau)$ where $M$ is a (necessarily finite) von Neumann algebra and $\tau$ is a faithful normal tracial state.
		\item An \emph{embedding} of tracial von Neumann algebras is a trace-preserving $*$-homomorphism.  Often $M$ and $N$ have a preferred trace in context, and an embedding $M \to N$ will still be assumed to be trace-preserving even if $\tau$ is not written explicitly.
		\item $\norm{\cdot}$ signifies the operator norms on a von Neumann algebra or more generally a $C^\ast$-algebra.
		\item $\norm{\cdot}_{2,\phi}$ denotes the $2$-norm with respect to a given state $\phi$, namely $\norm{x}_{2,\phi} = \phi(x^*x)^{1/2}$.  In particular, when a tracial state is given by context, then $\norm{\cdot}_2$ will denote its associated $2$-norm.
		\item $Z(M)$ denotes the center of a von Neumann algebra $M$.
		\item $M_{\operatorname{sa}}$ denotes the set of self-adjoint elements.
		\item $M_+$ denotes the set of positive elements.
		\item $U(M)$ denotes the set of unitary elements.
		\item $(M)_1$ denotes the unit ball of $M$ with respect to operator norm.
	\end{itemize}
	
	We also briefly recall a definition of direct integrals for tracial von Neumann algebras.  Although von Neumann's work defines direct integrals of von Neumann algebras based on direct integrals of the underlying Hilbert spaces, the direct integral can also be described directly in terms of algebra elements, thanks to the axiomatic characterizations of tracial von Neumann algebras as tracial $\mathrm{C}^*$-algebras whose unit ball is complete in $\norm{\cdot}_2$ \cite[Proposition 2.6.4]{AP16}.  This development is more suited to our purposes and consistent with the general definition for metric structures used in \cite{FG24,BYIT2024}.
	
	\begin{defn}[Measurable fields of tracial von Neumann algebras] \label{def: measurable field}
		Let $(\Omega,\mu)$ be a probability space (i.e.\ a measure space with $\mu(\Omega) = 1$).  Let $(M_\omega,\tau_\omega)$ be a collection of separable tracial von Neumann algebras indexed by $\omega \in \Omega$.  A \emph{section} is a function $f: \Omega \to \bigsqcup_{\omega \in \Omega} M_\omega$ with $f(\omega) \in M_\omega$.  Let $(e_n)_{n \in \N}$ be a sequence of sections such that
		\begin{enumerate}[(1)]
			\item $e_1(\omega) = 1$.
			\item For each $\omega$, $(e_n(\omega))_{n \in \N}$ is dense in $(M_\omega)_1$ with respect to $\norm{\cdot}_{2,\tau_\omega}$.
			\item For each $n$-variable $*$-polynomial $p$, the function $\omega \mapsto \tau_\omega[p(e_1(\omega),\dots,e_n(\omega))]$ is measurable.
		\end{enumerate}
		Then we say that $(M_\omega, (e_n(\omega))_{n \in \N})_{\omega \in \Omega}$ is a \emph{measurable field of (separable) tracial von Neumann algebras}.
	\end{defn}
	
	\begin{defn}
		Continue the same setup as Definition \ref{def: measurable field}.
		\begin{enumerate}
			\item A section $f: \omega \to \bigsqcup_{\omega \in \Omega} M_\omega$ is \emph{measurable} if $\omega \mapsto \tau_\omega(e_n(\omega) f(\omega))$ is measurable for each $n \in \N$.
			\item A section $f: \omega \to \bigsqcup_{\omega \in \Omega} M_\omega$ is \emph{bounded} if $\omega \mapsto \norm{f(\omega)}$ is bounded.
			\item A \emph{simple section} is a section of the form $\sum_{j=1}^n \lambda_j \mathbbm{1}_{S_j}(\omega) e_n(\omega)$ where $\lambda_j \in \C$ and $(S_j)_{j=1}^n$ is a measurable partition of $\Omega$.
		\end{enumerate}
	\end{defn}

	\begin{fact} \label{fact: sections}
		Continue the same setup as Definition \ref{def: measurable field}.
		\begin{enumerate}[(1)]
			\item If $(S_j)_{j=1}^m$ is a measurable partition of $\Omega$ and $p_j$ is a non-commutative $*$-polynomial in $n$ variables, then $\sum_{j=1}^m \mathbbm{1}_{S_j}(\omega) \tau_\omega[p_j(e_1(\omega),\dots,e_n(\omega))]$ is a measurable section, so in particular simple sections are measurable.
			\item If $f$ is a measurable section, then $\omega \mapsto \norm{f(\omega)}_{\tau_\omega,2}$ is measurable since
			\[
			\norm{f(\omega)}_{\tau_\omega,2} = \sup_{n \in \N} \mathbbm{1}_{e_n(\omega) \neq 0} \frac{|\tau_\omega(f(\omega) e_n(\omega))|}{\norm{e_n(\omega)}_{2,\tau_\omega}}.
			\]
			Hence, by polarization, $\omega \mapsto \tau_\omega(f(\omega)^* g(\omega))$ is measurable whenever $f$ and $g$ are measurable sections.
			\item If $f$ is a measurable section, then $\omega \mapsto \norm{f(\omega)}$ is measurable since
			\[
			\norm{f(\omega)} = \sup_{n,m} \mathbbm{1}_{e_n(\omega) \neq 0} \mathbbm{1}_{e_m(\omega) \neq 0} \frac{|\tau_\omega(e_n(\omega)^* f(\omega) e_m(\omega))|}{\norm{e_n(\omega)}_{\tau_\omega,2} \norm{e_m(\omega)}_{\tau_\omega,2}},
			\]
			and $\tau_\omega(e_n(\omega)^* f(\omega) e_m(\omega)) = \tau_\omega(f(\omega) [e_m(\omega) e_n(\omega)^*])$ is measurable by (1) and (2).
			\item If $f_n$ are measurable sections, $f$ is a section, and $\lim_{n \to \infty} \norm{f_n(\omega) - f(\omega)}_{\tau_\omega,2} = 0$ for a.e.\ $\omega$, then $f$ is a measurable section.
			\item Every bounded measurable section is a pointwise-$\norm{\cdot}_2$ limit of simple sections $f_n$ such that $\sup_\omega \norm{f_n(\omega)} \leq \sup_\omega \norm{f(\omega)}$.  Indeed, by rescaling, assume without loss of generality that $\norm{f(\omega)} \leq 1$. Then define $m(\omega,n)$ recursively with $m(\omega,1) = 1$ and
			\[
			m(\omega,n+1) = \begin{cases}
				n+1, & \text{ if } \norm{e_{n+1}(\omega) - f(\omega)}_{2,\tau_\omega} < \norm{e_{m(\omega,n)}(\omega) - f(\omega)}_{2,\tau_\omega} \\
				m(\omega,n), & \text{ else.}
			\end{cases}
			\]
			Then $f_n(\omega) := e_{m(\omega,n)}(\omega)$ is a simple section and $\norm{f_n(\omega) - f(\omega)}_{2,\tau_\omega} \to 0$.
			\item Bounded measurable sections form a $*$-algebra under pointwise multiplication.  To show closure under products, let $f$ and $g$ by bounded measurable sections and let $f_n$ and $g_n$ be approximating sequences of simple sections as in (6).  Then $f_n g_n$ is a measurable section by (1).  We also have $\norm{f_n(\omega) g_n(\omega) - f(\omega) g(\omega)}_{\tau_\omega,2} \to 0$ for each $\omega$, so $fg$ is measurable by (5).  Closure under addition and adjoint are left to the reader.
		\end{enumerate}
	\end{fact}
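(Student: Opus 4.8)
The plan is to treat all six items as a single ladder climbing out of the one measurability hypothesis built into Definition \ref{def: measurable field}, namely that $\omega \mapsto \tau_\omega[p(e_1(\omega),\dots,e_n(\omega))]$ is measurable for every $*$-polynomial $p$. The conceptual engine throughout is the left regular (GNS) representation of $M_\omega$ on $L^2(M_\omega)$: since $(e_n(\omega))_n$ is $2$-norm dense in the operator unit ball $(M_\omega)_1$, both the $2$-norm and the operator norm of any element can be recovered as a \emph{countable} supremum of quantities that are measurable in $\omega$, and this is what ultimately makes norms, inner products, and products measurable.

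First I would dispatch item (1): the function in question is a finite sum of products $\mathbbm{1}_{S_j}(\omega)\cdot \tau_\omega[p_j(e_\bullet(\omega))]$, each factor measurable by hypothesis, so the sum is measurable; and a simple section $f$ is a measurable section because $\tau_\omega(e_m(\omega) f(\omega))$ expands into exactly such a combination of polynomial traces. I would also record here that measurable sections form a complex vector space, immediate from the defining condition. For item (2) I would prove the stated formula by viewing $f(\omega)$ as a vector in $L^2(M_\omega)$ and using $\norm{\xi}_2 = \sup\{|\langle \xi,\eta\rangle| : \norm{\eta}_2 \le 1\}$; the point requiring care is that the normalized family $e_n(\omega)/\norm{e_n(\omega)}_2$ is $2$-norm dense in the unit sphere of $L^2(M_\omega)$. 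This holds because any nonzero $b \in M_\omega$ satisfies $a := b/\norm{b} \in (M_\omega)_1$ with $a/\norm{a}_2 = b/\norm{b}_2$, while bounded elements are $2$-norm dense. Each term of the supremum is then measurable (using $\tau_\omega(f e_n) = \tau_\omega(e_n f)$ and measurability of $\norm{e_n(\omega)}_2$), and a countable supremum of measurable functions is measurable; polarization upgrades this to measurability of $\omega \mapsto \tau_\omega(f(\omega)^* g(\omega))$, and in particular to closure of measurable sections under adjoints. Item (3) is the same idea one dimension up, via $\norm{f} = \sup_{\norm{\eta}_2,\norm{\zeta}_2 \le 1}|\langle f\eta,\zeta\rangle|$ for left multiplication on $L^2(M_\omega)$; I would rewrite $\tau_\omega(e_n^* f e_m) = \tau_\omega(f\cdot e_m e_n^*)$ and observe that $e_m e_n^*$ is itself a measurable section, so the trace is measurable by item (2).

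The remaining items are limiting arguments built on (1)--(3). For item (4), Cauchy--Schwarz gives $|\tau_\omega(e_m(\omega)(f(\omega) - f_n(\omega)))| \le \norm{e_m(\omega)}_2\,\norm{f(\omega) - f_n(\omega)}_2 \to 0$ a.e., exhibiting $\tau_\omega(e_m f)$ as an a.e.\ pointwise limit of measurable functions. For item (5) I would run the recursive nearest-point selection as written: $f_n(\omega) := e_{m(\omega,n)}(\omega)$ is simple because $m(\cdot,n)$ takes finitely many values on measurable sets, the comparison sets $\{\omega : \norm{e_{n+1}(\omega) - f(\omega)}_2 < \norm{f_n(\omega) - f(\omega)}_2\}$ being measurable by item (2) applied to the measurable sections $e_{n+1} - f$ and $f_n - f$; convergence is density of $(e_k(\omega))_k$ in $(M_\omega)_1$, and $\norm{f_n(\omega)} \le 1$ because each value lies in the unit ball. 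Finally, for item (6) I would approximate bounded measurable $f,g$ by simple $f_n,g_n$ via (5), note $f_n g_n$ is measurable by (1), and push the estimate $\norm{f_n g_n - fg}_2 \le \norm{f_n}\,\norm{g_n - g}_2 + \norm{g}\,\norm{f_n - f}_2$ (using $\norm{xy}_2 \le \norm{x}\norm{y}_2$ and $\norm{xy}_2 \le \norm{y}\norm{x}_2$ together with the uniform operator-norm bounds from (5)) to zero pointwise, whence $fg$ is measurable by (4).

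I expect the main obstacle to be items (2) and (3): the substance is not the algebra but the verification that the single $\omega$-independent countable family $(e_n(\omega))_n$ --- dense only in the operator unit ball --- genuinely computes both the Hilbert-space norm and the operator norm after normalization, uniformly enough that the supremum ranges over a fixed countable index set independent of $\omega$. Once those two supremum formulas are secured, every remaining item is a routine density-and-limit argument.
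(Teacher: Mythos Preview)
Your proposal is correct and follows essentially the same approach as the paper: the countable supremum formulas for $\norm{\cdot}_2$ and $\norm{\cdot}$ via the dense family $(e_n)$, the recursive nearest-point selection in (5), and the simple-section approximation for products in (6) are exactly the arguments the paper sketches inline. You have simply filled in the details the paper leaves implicit (the density-of-normalizations point in (2), the Cauchy--Schwarz step in (4), the explicit product estimate in (6)), so there is nothing to correct or contrast.
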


	\begin{defn}[Direct integrals of tracial von Neumann algebras]
		Let $(\Omega,\mu)$ and $(M_\omega, (e_n(\omega))_{n \in \N})_{\omega \in \Omega}$ be as in Definition \ref{def: measurable field}.  The direct integral is the pair $(M,\tau)$ given as follows:
		\begin{enumerate}
			\item $M$ is the set of bounded measurable sections modulo equality almost everywhere.
			\item The $*$-algebra operations on $M$ are defined pointwise on $\Omega$.
			\item The norm on $M$ is given by $\norm{f} = \operatorname{esssup}_{\omega \in \Omega} \norm{f(\omega)}$.
			\item The trace $\tau$ is given by $\tau(f) = \int_{\Omega} \tau_\omega(f(\omega))\,d\mu(\omega)$.
		\end{enumerate}
	\end{defn}
	
	To see that $(M,\tau)$ is a tracial von Neumann algebra via \cite[Proposition 2.6.4]{AP16}, one first checks it is a $\mathrm{C}^*$-algebra with the norm given by the essential supremum as above, which is immediate once we know that $\omega \mapsto \norm{f(\omega)}$ is measurable for each $f \in M$.  Then  $\tau$ is faithful since $\tau(f^*f) = 0$ implies that $\tau_\omega(f(\omega)^*f(\omega)) = 0$ and hence $f(\omega) = 0$ almost everywhere.  Finally, for completeness of the unit ball in $\norm{\cdot}_{2,\tau}$, suppose that $f_n$ is Cauchy in $\norm{\cdot}_{2,\tau}$.  Take a subsequence $f_{n(k)}$ such that $\sum_k \norm{f_{n(k)} - f_{n(k+1)}}_{2,\tau}^2 < \infty$, and observe that $f_{n(k)}(\omega)$ converges in $\norm{\cdot}_{2,\tau_\omega}$ almost everywhere to some $f(\omega)$.  By Fact \ref{fact: sections} (5), $f$ is measurable.  It also follows from standard measure-theory arguments that $f_n \to f$ in $\norm{\cdot}_{2,\tau}$.
	
	We remark that the set of measurable sections, and hence the direct integral, can depend on the choice of $e_n(\omega)$.  However, as in \cite[Remark 8.13]{BYIT2024}, if two sequences $(e_n(\omega))_{n \in \N}$ and $(e_n'(\omega))_{n \in \N}$ are measurable with respect to each other, then a bounded section $f$ is measurable with respect to $(e_n)_{n \in \N}$ if and only if it is measurable with respect to $(e_n')_{n \in \N}$ in light of Fact \ref{fact: sections} (5) and (4), and hence the direct integrals are the same.
	
	\subsection{Organization}
	
	The rest of the paper is organized as follows:
	\begin{itemize}
		\item \S \ref{sec: ultrafiber} explains the construction of ultrafibers and ultraproducts associated to pure states on a commutative von Neumann algebra, and how to deduce Corollary \ref{introcol: Ozawa's question EE} from Proposition \ref{introprop: ultraproduct EE}.
		\item \S \ref{sec: Los} describes the $N$-valued interpretation of formulas for some $N \subseteq Z(M)$ and proves Theorem \ref{introthm: Los}.
		\item \S \ref{sec: distribution} defines the distribution of theories and shows that it can be recovered when a tracial von Neumann algebra is given up to elementary equivalence, thus proving Theorem \ref{introthm: EE disintegration}.
		\item \S \ref{sec: saturation} shows that under continuum hypothesis, an ultrafiber associated to a separable direct integral will in fact be isomorphic to an ultraproduct over the natural numbers of some of the fibers.
		\item \S \ref{sec: RM} applies the ultrafiber construction to ultraproducts of random matrix algebras.  This gives a new way of making a ``deterministic selection'' of elements in an ultraproduct representing the large-$n$ limit of random matrix models, which sidesteps measurability problems inherent in the study of ultralimits.
	\end{itemize}
	
	\subsection{Acknowledgements}
	
	We thank Ilijas Farah for bringing the problem of direct integrals and model theory to our attention. 
	We thank Srivatsav Kunnawalkam Elayavalli, Jennifer Pi, Ionut Chifan, and David Sherman for comments on a draft of the paper.  We thank the referee for careful reading of the manuscript and helpful suggestions.
	DJ was partially supported by the National Sciences and Engineering Research Council (Canada), grant RGPIN-2017-05650, and the Danish Independent Research Fund, grant 1026-00371B.
	
	\section{Ultrafibers and generalized ultrapowers} \label{sec: ultrafiber}
	
	\subsection{Construction of ultrafibers} \label{subsec: ultrafiber}
	
	As motivation, we recall the relationship between ultrafilters on discrete sets and characters.  A \emph{character} on a commutative (unital) $\mathrm{C}^*$-algebra is a multiplicative linear functional into the complex numbers.  It is a standard fact that characters are equivalent to \emph{pure states}, or extreme points in the spaces of states (positive linear functionals of norm $1$).
	
	For an ultrafilter $\mathcal{U}$ on an index set $I$, the ultralimit $\lim_\mathcal{U}$ defines a character on $l^\infty(I)$. If $\mathcal{U} \neq \mathcal{V}$, then there is a set $A \subseteq I$ with $A \in \mathcal{U}$ but $A \notin \mathcal{V}$, so $\lim_\mathcal{U} \mathbbm{1}_A = 1$ but $\lim_\mathcal{V} \mathbbm{1}_A = 0$, whence the map $\mathcal{U} \mapsto \lim_\mathcal{U}$ is injective. In fact, it is also surjective, for, if $\sigma$ is a character on $l^\infty(I)$, then $\sigma$ must send any projection to either $0$ or $1$. As such, it is easy to verify that
	\begin{equation*}
		\mathcal{U} = \{A \subseteq I: \sigma(1_A) = 1\}
	\end{equation*}
	is an ultrafilter.  For this ultrafilter, clearly $\lim_\mathcal{U} \mathbbm{1}_A = \sigma(\mathbbm{1}_A)$ for any projection $\mathbbm{1}_A$ in $\ell^\infty(I)$. As projections densely span a von Neumann algebra, we see that $\lim_\mathcal{U} = \sigma$. Therefore, we shall use ultrafilters on $I$ and characters on $l^\infty(I)$ interchangeably and use the same symbol to denote both. We shall further generalize to use $\mathcal{U}$ and $\mathcal{V}$ to denote characters on arbitrary commutative von Neumann algebras.
	
	\begin{defn} \label{def: ultrafiber}
		Let $M$ be a finite von Neumann algebra, $N \subseteq Z(M)$ be a subalgebra of its center, $E: M \to N$ be a normal, tracial, faithful conditional expectation, and $\mathcal{U}$ be a character on $N$. The \textit{ultrafiber} of $M$ w.r.t. $E$ and $\mathcal{U}$, denoted by $M^{/E, \mathcal{U}}$, is given by
		\begin{equation*}
			M^{/E, \mathcal{U}} = M/I_{E, \mathcal{U}}
		\end{equation*}
		where $I_{E, \mathcal{U}}$ is the closed ideal given by,
		\begin{equation*}
			I_{E, \mathcal{U}} = \{x \in M: \mathcal{U}(E(x^*x)) = 0\}
		\end{equation*}
		$M^{/E, \mathcal{U}}$ shall always be regarded as equipped with the faithful tracial state $\tau_{E, \mathcal{U}}$ given by,
		\begin{equation*}
			\tau_{E, \mathcal{U}}(x + I_{E, \mathcal{U}}) = \mathcal{U}(E(x))
		\end{equation*}
	\end{defn}

	Consider the case where $M$ is a direct sum $\oplus_{i \in I} M_i$ of tracial von Neumann algebra $(M_i,\tau_i)$ over some index set $I$; suppose $N = \ell^\infty(I)$ and $E: M \to N$ is given by application of the trace on each direct summand.  Suppose $\mathcal{U}$ is the character given by the ultralimit associated to an ultrafilter on $I$, which we shall also denoted by $\mathcal{U}$.  Then it is easy to verify that $M^{/E, \mathcal{U}} = \prod_\mathcal{U} M_i$. Hence, the ultrafiber construction generalizes the classical ultraproducts.  We remark here that although our primary interest is in tracial von Neumann algebras, it is important in Definition \ref{def: ultrafiber} \emph{not} to assume that $M$ admits a faithful tracial state; indeed, if $M$ had a faithful tracial state, then so would $N$, but if $I$ is an uncoutable discrete set, then $\ell^\infty(I)$ does not have a faithful state.

	\begin{prop}\label{prop: ultrafiber-well-defined}
		$(M^{/E, \mathcal{U}}, \tau_{E, \mathcal{U}})$ is a tracial von Neumann algebra. Furthermore, any element of norm 1 in $M^{/E, \mathcal{U}}$ can be lifted to an element of norm 1 in $M$.
	\end{prop}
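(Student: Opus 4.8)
The plan is to verify, via the characterization in \cite[Proposition 2.6.4]{AP16}, that $(M^{/E,\mathcal U},\tau_{E,\mathcal U})$ is a unital $\mathrm C^*$-algebra carrying a faithful tracial state whose unit ball is complete in $\norm{\cdot}_2$. Write $\phi := \mathcal U \circ E$, a state on $M$. Since $E$ is tracial and $\mathcal U$ is multiplicative, $\phi(xy) = \mathcal U(E(xy)) = \mathcal U(E(yx)) = \phi(yx)$, so $\phi$ is a (generally non-normal, non-faithful) tracial state, and $I_{E,\mathcal U} = \{x : \phi(x^*x) = 0\}$ is precisely its GNS null space. First I would check that $I_{E,\mathcal U}$ is a closed two-sided ideal: it is a closed left ideal by the Cauchy--Schwarz inequality for $\phi$ (as for any state), and it is a right ideal because traciality gives $\phi(m^*x^*xm) = \phi\big((x^*x)^{1/2} mm^* (x^*x)^{1/2}\big) \le \norm{m}^2 \phi(x^*x)$. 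Hence $M^{/E,\mathcal U} = M/I_{E,\mathcal U}$ is a unital $\mathrm C^*$-algebra, and the same Cauchy--Schwarz bound shows $\phi$ vanishes on $I_{E,\mathcal U}$, so $\tau_{E,\mathcal U}(x + I_{E,\mathcal U}) = \phi(x)$ is well defined; it is manifestly a tracial state, and faithful by the very definition of $I_{E,\mathcal U}$.

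For the norm-one lifting I would use the standard functional-calculus trick for $\mathrm C^*$-quotients. Given $\bar x \in M^{/E,\mathcal U}$ with $\norm{\bar x} = 1$, pick any lift $x_0$ and set $x = x_0\, g(x_0^* x_0)$, where $g$ is the continuous function with $g(t) = \min(1, t^{-1/2})$. Then $x^* x = g(x_0^*x_0)\, x_0^* x_0\, g(x_0^*x_0) = \min(x_0^*x_0, 1)$ has norm $\le 1$, so $\norm{x} \le 1$; and since $\Spec(\bar x^*\bar x) \subseteq [0,1]$ we have $g(\bar x^*\bar x) = 1$, whence the image of $x$ is $\bar x\, g(\bar x^*\bar x) = \bar x$. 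As the quotient map is contractive, $\norm{x} = 1$. The same construction yields contractive lifts of arbitrary unit-ball elements, which is all that is needed below.

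The heart of the argument, and the step I expect to be the main obstacle, is completeness of the unit ball $(M^{/E,\mathcal U})_1$ in $\norm{\cdot}_{2,\tau_{E,\mathcal U}}$. The difficulty is that $\phi$ is not normal, so a $\norm{\cdot}_2$-Cauchy sequence of lifts need not be Cauchy in any norm in which $(M)_1$ is known to be complete; the usual ultraproduct proof is unavailable because $\mathcal U$ is a character on an abstract commutative algebra rather than an ultrafilter on an index set. My plan is to replace ``sets belonging to the ultrafilter'' by ``central spectral projections of $\mathcal U$-value $1$,'' exploiting that a character commutes with continuous functional calculus: for $a \in N_{\mathrm{sa}}$ and continuous $f$ one has $\mathcal U(f(a)) = f(\mathcal U(a))$. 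Concretely, given a rapidly Cauchy sequence $\bar x_n$ (arranged so that $\norm{\bar x_{n+1} - \bar x_n}_2 < 2^{-n}$) with contractive lifts $x_n$, set $d_n = x_{n+1} - x_n$ and let $p_n \in N$ be the spectral projection of $E(d_n^*d_n)$ onto $[0, 2^{-n}]$; the functional-calculus identity forces $\mathcal U(p_n) = 1$. Then $P_n := p_1\cdots p_n$ is a decreasing sequence of central projections with $\mathcal U(P_n) = 1$, and on each $P_n$ the increments $d_1,\dots,d_n$ are uniformly small in the $N$-valued $2$-norm $\norm{y}_{2,E} = \norm{E(y^*y)}^{1/2}$.

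I would then patch the lifts together across the orthogonal central projections $c_n := P_{n-1} - P_n$, defining $x = \sum_n c_n x_n + P_\infty w$, where $P_\infty = \bigwedge_n P_n$ and $w$ is the $\norm{\cdot}_{2,E}$-limit of $P_\infty x_n$. Because the $c_n$ are orthogonal central projections and each $\norm{x_n} \le 1$, the sum converges strongly to an element of $(M)_1$; here it is essential that $M$ is a genuine von Neumann algebra. On the residual projection $P_\infty$ one has $\norm{P_\infty d_n}_{2,E} \le 2^{-n/2}$, so $(P_\infty x_n)$ is $\norm{\cdot}_{2,E}$-Cauchy, and its limit $w \in (M)_1$ exists because $(M)_1$ is complete in this $N$-valued $2$-norm, which follows from $\sigma$-weak compactness of $(M)_1$, normality of $E$, and the fact that the normal states of $N$ determine its order (equivalently, $M$ is a self-dual Hilbert module over $N$). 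Finally I would estimate $\norm{\bar x - \bar x_n}_2^2 = \mathcal U\big(E((x-x_n)^*(x-x_n))\big)$ by splitting along $P_n$ and $1-P_n$: the part on $1-P_n$ vanishes since $\mathcal U(1-P_n) = 0$, while on $P_n$ one uses $P_n \le p_j$ (so that each increment is controlled by $2^{-j}$), the bound $\norm{\cdot}_{2,\phi} \le \norm{\cdot}_{2,E}$, and orthogonality of the $c_m$ to bound the total by a constant times $2^{-n}$. This gives $\bar x_n \to \bar x$, hence completeness, and \cite[Proposition 2.6.4]{AP16} upgrades $M^{/E,\mathcal U}$ to a tracial von Neumann algebra. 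An alternative, less hands-on route to completeness is to identify $M^{/E,\mathcal U}$ with $\pi_\phi(M)$ in the GNS representation of $\phi$ and argue that this image is weakly closed, in the spirit of Wright's analysis of quotients of $\mathrm{AW}^*$-algebras by maximal ideals \cite{Wri54}.
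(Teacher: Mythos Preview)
Your proposal is correct and follows essentially the same strategy as the paper: reduce to \cite[Proposition 2.6.4]{AP16}, prove completeness of $(M)_1$ in the $N$-valued norm $\norm{\cdot}_{2,E}$, and bridge the gap between $\norm{\cdot}_{2,\phi}$ and $\norm{\cdot}_{2,E}$ using central projections of $\mathcal U$-value $1$. The execution differs only in packaging. The paper isolates the ``projection with $\mathcal U$-value $1$'' step as a separate lemma (Lemma~\ref{neighborhood-of-character}, proved via the Gelfand spectrum) and then \emph{inductively modifies} the lifts so that the resulting sequence is itself $\norm{\cdot}_{2,E}$-Cauchy; you instead take arbitrary lifts and assemble the limit in one shot via the partition $c_n = P_{n-1}-P_n$. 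The paper derives the norm-one lifting as a byproduct of completeness (showing $q((M)_1)$ is closed and contains the open ball), whereas your direct functional-calculus argument $x = x_0\,g(x_0^*x_0)$ is cleaner and independent of the rest.

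One small imprecision: you write that ``the functional-calculus identity forces $\mathcal U(p_n)=1$,'' but $p_n = \mathbbm{1}_{[0,2^{-n}]}(E(d_n^*d_n))$ is obtained from a \emph{discontinuous} function, and $\mathcal U$ is not normal. You need to sandwich: pick continuous $f$ with $0\le f\le \mathbbm{1}_{[0,2^{-n}]}$ and $f\equiv 1$ on a neighbourhood of $\mathcal U(E(d_n^*d_n))<2^{-2n}$; then $\mathcal U(p_n)\ge \mathcal U(f(E(d_n^*d_n)))=f(\mathcal U(E(d_n^*d_n)))=1$. This is exactly what the paper's Lemma~\ref{neighborhood-of-character} accomplishes in a slightly different form.
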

	
	The proof follows similar lines as the proof that classical ultraproducts are tracial von Neumann algebras in \cite[Proposition 5.4.1]{AP16}.  We recall two ingredients from classical von Neumann algebra theory.
	
	\begin{lemma}\label{neighborhood-of-character}
		Let $N$ be a commutative von Neumann algebra, $\mathcal{U}$ be a character on $N$, $f \in N$. For any $\epsilon > 0$, there exists a projection $p \in N$ s.t. $\mathcal{U}(p) = 1$ and $\|fp - \mathcal{U}(f)p\|_\infty \leq \epsilon$.
	\end{lemma}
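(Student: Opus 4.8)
The plan is to reduce everything to the spectral calculus of the single positive element
$h := (f - \mathcal{U}(f) 1)^*(f - \mathcal{U}(f) 1) \in N$. First I would set $g = f - \mathcal{U}(f) 1$ and record the two facts that drive the argument. Since $\mathcal{U}$ is a character, hence unital and multiplicative, $\mathcal{U}(g) = \mathcal{U}(f) - \mathcal{U}(f)\mathcal{U}(1) = 0$; and since a character on a $\mathrm{C}^*$-algebra is a $*$-homomorphism, $\mathcal{U}(h) = \mathcal{U}(g^*)\mathcal{U}(g) = |\mathcal{U}(g)|^2 = 0$. This is the only place where multiplicativity of $\mathcal{U}$ is used, and it is precisely what lets us control a single positive element instead of $f$ directly.

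Next I would invoke that $N$ is a von Neumann algebra, hence closed under Borel functional calculus, to define the spectral projection $p := \mathbbm{1}_{[0,\epsilon^2]}(h) \in N$. This projection serves both purposes. For the norm estimate, $p$ commutes with $h$ and $hp \le \epsilon^2 p$, so $(gp)^*(gp) = p\, g^* g\, p = hp$ has norm at most $\epsilon^2$; taking square roots gives $\norm{fp - \mathcal{U}(f) p} = \norm{gp} \le \epsilon$, as required.

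Finally, to see $\mathcal{U}(p) = 1$, I would use positivity of the character. Writing $q = 1 - p = \mathbbm{1}_{(\epsilon^2,\infty)}(h)$, functional calculus yields the operator inequality $h \ge \epsilon^2 q$ (it holds pointwise on the spectrum of $h$). Applying the positive unital functional $\mathcal{U}$ and using $\mathcal{U}(h) = 0$ gives $0 = \mathcal{U}(h) \ge \epsilon^2 \mathcal{U}(q)$, so $\mathcal{U}(q) = 0$ and hence $\mathcal{U}(p) = \mathcal{U}(1) - \mathcal{U}(q) = 1$. (Alternatively, one could note $\mathcal{U}(p) \in \{0,1\}$ because $p$ is a projection, and rule out $\mathcal{U}(p) = 0$ using the same inequality.)

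There is no serious obstacle; the whole content lies in choosing $h$ as the object to which functional calculus is applied. The only points to be careful about are that we genuinely need $N$ to be a von Neumann algebra, rather than merely a $\mathrm{C}^*$-algebra, so that the spectral projection $\mathbbm{1}_{[0,\epsilon^2]}(h)$ lands in $N$, and that $\mathcal{U}$, being a character on a $\mathrm{C}^*$-algebra, is automatically a positive unital state so that the final inequality is legitimate.
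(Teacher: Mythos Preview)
Your proof is correct. It differs from the paper's argument, which passes through Gelfand duality: the paper identifies $N \cong C(\Omega)$ for $\Omega$ the Gelfand spectrum, observes that $f$ is continuous so $|f(t) - \mathcal{U}(f)| \leq \epsilon$ on an open neighborhood of the point $\mathcal{U} \in \Omega$, and then shrinks that neighborhood to a clopen set (using that $\Omega$ is Stonean) to obtain $p$. Your approach stays internal to $N$ via Borel functional calculus on the single element $h = |f - \mathcal{U}(f)1|^2$, taking $p = \mathbbm{1}_{[0,\epsilon^2]}(h)$ and reading off both conclusions from the operator inequalities $hp \leq \epsilon^2 p$ and $h \geq \epsilon^2(1-p)$. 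The two proofs are secretly producing the same projection (your $p$ is the indicator of the clopen set $\{t : |f(t) - \mathcal{U}(f)| \leq \epsilon\}$), but your route avoids invoking the Stonean property of the spectrum, trading it for the existence of spectral projections in a von Neumann algebra; this is arguably cleaner for readers more comfortable with functional calculus than with extremally disconnected spaces.
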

	
	\begin{proof} Let $\Omega$ be the spectrum of $N$ and we shall regard $f$ as a continuous function on $\Omega$. Then there exists an open neighborhood $O \subseteq \Omega$ of $\mathcal{U}$ s.t. $|f(t) - \mathcal{U}(f)| \leq \epsilon$ whenever $t \in O$. As clopen sets form a basis of topology for $\Omega$, $O$ can be chosen to be clopen. Letting $p = 1_O$ concludes the proof.
	\end{proof}
	
	\begin{lemma}\label{2-infty-norm}
		Let $M$ be a finite von Neumann algebra, $N \subseteq Z(M)$ be a subalgebra of its center, $E: M \to N$ be a normal, tracial, faithful conditional expectation. Then $\|x\|_{E, 2, \infty} = \|E(x^\ast x)\|_\infty^{1/2}$ defines a norm on $M$ and $(M)_1$ is complete under $\|\cdot\|_{E, 2, \infty}$.
	\end{lemma}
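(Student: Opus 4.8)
The plan is to treat $\norm{\cdot}_{E,2,\infty}$ as a supremum of ordinary GNS $2$-seminorms and to deduce completeness of $(M)_1$ from $\sigma$-weak compactness together with a lower semicontinuity estimate.

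\emph{The norm properties.} First I would record the elementary fact that for a commutative von Neumann algebra $N$ and $a \in N_+$,
\[
\norm{a}_\infty = \sup\{\psi(a) : \psi \text{ a normal state on } N\}.
\]
Indeed, if $p$ is the spectral projection of $a$ for the interval $[\norm{a}_\infty - \varepsilon, \norm{a}_\infty]$, then $p \neq 0$, so there is a normal state $\psi$ with $\psi(p) = 1$, and $a \geq (\norm{a}_\infty - \varepsilon)p$ gives $\psi(a) \geq \norm{a}_\infty - \varepsilon$; the reverse inequality is clear. Applying this with $a = E(x^*x)$ and writing $\varphi_\psi := \psi \circ E$, which is a state on $M$ because $E$ is positive and unital, I obtain
\[
\norm{x}_{E,2,\infty} = \sup_\psi \norm{x}_{2,\varphi_\psi},
\]
the supremum over normal states $\psi$ on $N$. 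Each $\norm{\cdot}_{2,\varphi_\psi}$ is a genuine seminorm (triangle inequality from Cauchy--Schwarz for the state $\varphi_\psi$), so the supremum is a seminorm; it is a norm because $E$ is faithful, whence $\norm{x}_{E,2,\infty} = 0$ forces $E(x^*x) = 0$ and hence $x = 0$. Homogeneity follows from $N$-bilinearity of $E$.

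\emph{Completeness of $(M)_1$.} Let $(x_n)$ be a $\norm{\cdot}_{E,2,\infty}$-Cauchy sequence in $(M)_1$. Since $M$ is a dual space and $(M)_1$ is $\sigma$-weakly compact, I would pass to a subnet $x_{n_\alpha} \to x$ in the $\sigma$-weak topology, with $x \in (M)_1$. The goal is then to upgrade this to $\norm{x_n - x}_{E,2,\infty} \to 0$; once this is shown the limit $x$ lies in $(M)_1$ and we are done. The mechanism is standard given one key ingredient: if $y \mapsto \norm{y}_{E,2,\infty}$ is $\sigma$-weakly lower semicontinuous, then for fixed $m$ so is $y \mapsto \norm{y - x_m}_{E,2,\infty}$, whence
\[
\norm{x - x_m}_{E,2,\infty} \leq \liminf_\alpha \norm{x_{n_\alpha} - x_m}_{E,2,\infty}.
\]
Given $\varepsilon > 0$ and $N_0$ with $\norm{x_n - x_m}_{E,2,\infty} < \varepsilon$ for $n, m \geq N_0$, cofinality of the subnet makes the right-hand side at most $\varepsilon$ for every $m \geq N_0$, which yields the convergence.

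\emph{The main obstacle.} The crux is thus the $\sigma$-weak lower semicontinuity of $\norm{\cdot}_{E,2,\infty}$. This is \emph{not} transparent from the description $\norm{x}_{E,2,\infty}^2 = \sup_\omega \omega(E(x^*x))$ over characters $\omega$ of $N$, since such $\omega$ (for instance the evaluations coming from free ultrafilters when $N = \ell^\infty(I)$) are typically not normal, so the individual functionals $x \mapsto (\omega \circ E)(x^*x)$ need not be $\sigma$-weakly lower semicontinuous. The resolution is precisely the reformulation from the first step: writing $\norm{x}_{E,2,\infty} = \sup_\psi \norm{x}_{2,\varphi_\psi}$ over \emph{normal} states $\psi$ on $N$, each $\varphi_\psi = \psi \circ E$ is now a normal state on $M$ (composition of normal maps). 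For a normal state $\varphi$ with GNS data $(\pi_\varphi, H_\varphi, \xi_\varphi)$, the representation $\pi_\varphi$ is $\sigma$-weakly continuous, so
\[
\norm{x}_{2,\varphi}^2 = \norm{\pi_\varphi(x)\xi_\varphi}^2 = \sup_{\norm{\zeta} \leq 1} |\langle \pi_\varphi(x)\xi_\varphi, \zeta\rangle|^2
\]
exhibits $\norm{x}_{2,\varphi}^2$ as a supremum of $\sigma$-weakly continuous functions, hence lower semicontinuous; since $t \mapsto \sqrt{t}$ is continuous and increasing, $x \mapsto \norm{x}_{2,\varphi}$ is lower semicontinuous as well. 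Taking the supremum over $\psi$ preserves lower semicontinuity, so $\norm{\cdot}_{E,2,\infty}$ is $\sigma$-weakly lower semicontinuous, which completes the argument.
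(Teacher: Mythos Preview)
Your proof is correct. Both your argument and the paper's share the same skeleton: produce a weak limit point of the Cauchy sequence in $(M)_1$ and then use normality to push the Cauchy estimate through to the limit. The implementations differ, however. The paper builds an explicit faithful normal representation $\pi = \bigoplus_i \pi_{\varphi_i \circ E}$ from a maximal family of normal states on $N$ with disjoint supports, shows the Cauchy sequence converges strong-$*$ in that picture, and then invokes normality of $E$ to pass $\|E((x_n - x_m)^*(x_n - x_m))\|_\infty \leq \varepsilon$ to the limit in $m$. You instead express $\|\cdot\|_{E,2,\infty}$ as a supremum of GNS $2$-norms over \emph{normal} states $\psi$ on $N$, observe that each such $\|\cdot\|_{2,\psi\circ E}$ is $\sigma$-weakly lower semicontinuous (since $\psi\circ E$ is normal), and run the standard Banach--Alaoglu plus lower-semicontinuity argument on a $\sigma$-weak cluster point. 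Your route is a bit more abstract and avoids the explicit disjoint-support construction; the paper's route is more hands-on and yields the slightly stronger intermediate statement that the sequence actually converges strong-$*$. Either way the key insight is the same one you flagged: the supremum must be taken over \emph{normal} states rather than characters, so that the relevant functionals are $\sigma$-weakly continuous.
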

	
	\begin{proof}
		That $\|\cdot\|_{E, 2, \infty}$ defines a norm is an easy exercise. To prove completeness, we let $\{\varphi_i\}_{i \in I}$ be a maximal collection of normal states on $N$ with disjoint supports. Then $\sum_{i \in I} \mathrm{supp}(\varphi_i) = 1$, so the direct sum of the GNS representations of $M$, associated with $\varphi_i \circ E$,
		\begin{equation*}
			\pi = \bigoplus_{i \in I} \pi_{\varphi_i \circ E}
		\end{equation*}
		is a faithful normal representation. Thus, a uniformly dense subset of positive linear functionals on $M$ is of the form,
		\begin{equation*}
			\phi(x) = \sum_{i \in I_0} \varphi_i(E(y_ix))
		\end{equation*}
		where $I_0 \subseteq I$ is a finite subset and $y_i \in M_+$ is supported on $\mathrm{supp}(\varphi_i)$.
		
		Now, let $(x_n) \subseteq (M)_1$ be a Cauchy sequence under $\|\cdot\|_{E, 2, \infty}$. For any finite subset $I_0 \subseteq I$ and $y_i \in M_+$ supported on $\mathrm{supp}(\varphi_i)$, we have,
		\begin{equation*}
			\begin{split}
				\sum_{i \in I_0} \varphi_i(E(y_i(x_n - x_m)^\ast (x_n - x_m))) &\leq \sum_{i \in I_0} \|y_i\|_\infty \varphi_i(E((x_n - x_m)^\ast (x_n - x_m)))\\
				&\leq \sum_{i \in I_0} \|y_i\|_\infty \|x_n - x_m\|_{E, 2, \infty}^2\\
				&\to 0
			\end{split}
		\end{equation*}
		as $n, m \to \infty$. Hence, as $\|x_n\|_\infty \leq 1$ for all $n$, we see that $x_n \to x$ in the strong$^\ast$ topology for some $x \in (M)_1$.
		
		We claim that $\|x_n - x\|_{E, 2, \infty}^2 \to 0$ to conclude the proof. For any $\epsilon > 0$, there exists $N > 0$ s.t. $\|x_n - x_m\|_{E, 2, \infty}^2 \leq \epsilon$ whenever $n, m \geq N$, i.e., $\|E((x_n - x_m)^\ast (x_n - x_m))\|_\infty \leq \epsilon$. As $x_m \to x$ in the strong$^\ast$ topology, $(x_n - x_m)^\ast (x_n - x_m) \to (x_n - x)^\ast (x_n - x)$ in the strong$^\ast$ topology. As $E$ is normal, we have $E((x_n - x_m)^\ast (x_n - x_m)) \to E((x_n - x)^\ast (x_n - x))$ in the strong$^\ast$ topology, whence $\|E((x_n - x)^\ast (x_n - x))\|_\infty \leq \epsilon$, i.e., $\|x_n - x\|_{E, 2, \infty}^2 \leq \epsilon$ whenever $n \geq N$. The concludes the proof.
	\end{proof}

	\begin{proof}[Proof of Proposition \ref{prop: ultrafiber-well-defined}]
		Let $q: M \to M^{/E, \mathcal{U}} = M/I_{E, \mathcal{U}}$ be the natural quotient map. We claim that $q((M)_1)$ is complete under $\|\cdot\|_{\tau_{E, \mathcal{U}}, 2}$. Granted the claim, then as $q((M)_1)$ is contained in $(M^{/E, \mathcal{U}})_1$ and contains all elements of the latter space of operator norm strictly smaller than 1, consequently dense in the latter space under $\|\cdot\|_{\tau_{E, \mathcal{U}}, 2}$, we must have $q((M)_1) = (M^{/E, \mathcal{U}})_1$ and it is complete. Whence, it is a tracial von Neumann algebra with $\tau_{E, \mathcal{U}}$ a faithful, normal, tracial state, by \cite[Proposition 2.6.4]{AP16}.
		
		To prove the claim, we let $(x_n) \subseteq q((M)_1)$ be a Cauchy sequence under $\|\cdot\|_{\tau_{E, \mathcal{U}}, 2}$. By taking a subsequence if necessary, we may assume $\|x_n - x_{n+1}\|_{\tau_{E, \mathcal{U}}, 2} \leq 2^{-(n+1)}$. We shall construct by induction a sequence $(\mathbbm{x_n}) \subseteq (M)_1$, with $q(\mathbbm{x_n}) = x_n$ and $\|\mathbbm{x_n} - \mathbbm{x_{n+1}}\|_{E, 2, \infty} \leq 2^{-n}$. We start with an arbitrary $\mathbbm{x_1} \in (M)_1$ with $q(\mathbbm{x_1}) = x_1$. Now, assume $\mathbbm{x_n}$ up to some $n$ have been constructed, let $\mathbbm{y_{n+1}} \in (M)_1$ be arbitrarily chosen with $q(\mathbbm{y_{n+1}}) = x_{n+1}$. Since,
		\begin{equation*}
			\mathcal{U}(E((\mathbbm{x_n} - \mathbbm{y_{n+1}})^\ast (\mathbbm{x_n} - \mathbbm{y_{n+1}}))) = \|x_n - x_{n+1}\|_{\tau_{E, \mathcal{U}}, 2}^2 \leq 2^{-2(n+1)}
		\end{equation*}
		by Lemma \ref{neighborhood-of-character}, there exists a projection $p \in N$ s.t. $\mathcal{U}(p) = 1$ and,
		\begin{equation*}
			\|E((\mathbbm{x_n} - \mathbbm{y_{n+1}})^\ast (\mathbbm{x_n} - \mathbbm{y_{n+1}}))p\|_\infty \leq 2^{-2n}
		\end{equation*}
		
		Let $\mathbbm{x_{n+1}} = p\mathbbm{y_{n+1}} + (1-p)\mathbbm{x_n}$. Since both $\mathbbm{y_{n+1}}$ and $\mathbbm{x_n}$ have operator norms bounded by $1$, $\mathbbm{x_{n+1}} \in (M)_1$ as well. We have,
		\begin{equation*}
			\begin{split}
				\|\mathbbm{x_n} - \mathbbm{x_{n+1}}\|_{E, 2, \infty}^2 &= \|E((\mathbbm{x_n} - \mathbbm{x_{n+1}})^\ast (\mathbbm{x_n} - \mathbbm{x_{n+1}}))\|_\infty\\
				&= \|E((p\mathbbm{x_n} - p\mathbbm{y_{n+1}})^\ast (p\mathbbm{x_n} - p\mathbbm{y_{n+1}}))\|_\infty\\
				&= \|E((\mathbbm{x_n} - \mathbbm{y_{n+1}})^\ast (\mathbbm{x_n} - \mathbbm{y_{n+1}}))p\|_\infty\\
				&\leq 2^{-2n}
			\end{split}
		\end{equation*}
		
		We also have,
		\begin{equation*}
			\begin{split}
				\mathcal{U}(E((\mathbbm{x_{n+1}} - \mathbbm{y_{n+1}})^\ast (\mathbbm{x_{n+1}} - \mathbbm{y_{n+1}}))) &= \mathcal{U}(E(((1-p)\mathbbm{x_n} - (1-p)\mathbbm{y_{n+1}})^\ast ((1-p)\mathbbm{x_n} - (1-p)\mathbbm{y_{n+1}})))\\
				&= \mathcal{U}((1-p)E((\mathbbm{x_n} - \mathbbm{y_{n+1}})^\ast (\mathbbm{x_n} - \mathbbm{y_{n+1}})))\\
				&= 0
			\end{split}
		\end{equation*}
		where in the final equality we used that $\mathcal{U}(p) = 1$ and therefore $\mathcal{U}(1-p) = 0$. Thus, $\mathbbm{x_{n+1}} - \mathbbm{y_{n+1}} \in I_{E, \mathcal{U}}$ and $q(\mathbbm{x_{n+1}}) = q(\mathbbm{y_{n+1}}) = x_{n+1}$. This concludes the inductive construction.
		
		By Lemma \ref{2-infty-norm}, $\mathbbm{x_n} \to \mathbbm{x}$ for some $\mathbbm{x} \in (M)_1$ in $\|\cdot\|_{E, 2, \infty}$. Let $x = q(\mathbbm{x})$. We shall show that $x_n \to x$ in $\|\cdot\|_{\tau_{E, \mathcal{U}}, 2}$ to conclude the proof of the claim. Indeed,
		\begin{equation*}
			\begin{split}
				\|x_n - x\|_{\tau_{E, \mathcal{U}}, 2}^2 &= \mathcal{U}(E((\mathbbm{x_n} - \mathbbm{x})^\ast (\mathbbm{x_n} - \mathbbm{x})))\\
				&\leq \|E((\mathbbm{x_n} - \mathbbm{x})^\ast (\mathbbm{x_n} - \mathbbm{x}))\|_\infty\\
				&= \|\mathbbm{x_n} - \mathbbm{x}\|_{E, 2, \infty}\\
				&\to 0
			\end{split}
		\end{equation*}
		
		This proves the claim.
	\end{proof}
	
	\begin{defn}[Generalized ultrapowers] \label{def: gen ultrapower}
		Let $(M, \tau)$ be a tracial von Neumann algebra, $A$ be an abelian von Neumann algebra, and $\mathcal{U}$ be a character on $A$. The \textit{generalized ultrapower} of $M$ with respect to $\cU$, denoted by $M^\mathcal{U}$, is given by
		\begin{equation*}
			M^\mathcal{U} = (A \mathbin{\overline{\otimes}} M)^{/E, \mathcal{U}},
		\end{equation*}
		where $E: A \mathbin{\overline{\otimes}} M \to A$ is given by $E = \mathrm{Id} \otimes \tau$. We shall denote the canonical trace on $M^\mathcal{U}$ by $\tau_\mathcal{U}$. The map that sends $x \in M$ to the element represented by $1_A \otimes x$ in $M^\mathcal{U}$ is a trace-preserving embedding and shall be called the \textit{diagonal embedding}.
	\end{defn}
	
	\begin{rmk}
		Consider the case where $A = \ell^\infty(I)$, where $I$ is an index set, and $\mathcal{U}$ is the character given by the ultralimit associated to an ultrafilter on $I$, which we shall also denote by $\mathcal{U}$. Then $M^\mathcal{U}$, as defined here, is easily seen to be the same as the ordinary ultrapower of $M$ with respect to $\mathcal{U}$. The diagonal embedding is also the same map as in the ordinary case. Hence, this construction generalizes the ordinary ultrapowers.
	\end{rmk}
	
	\subsection{Factoriality and centers}
	
	The center of $M^{/E,\cU}$ behaves as one would expect based on the case of ultraproducts over discrete index sets \cite[Corollary 4.3]{FHS13}.  To set up the proof, we first recall the center-valued trace and the Dixmier averaging theorem.
	
	\begin{thm}{(Generalized Dixmier averaging theorem \cite[III.5, Cor.~of Thm.~1]{Dix81})}
		Let $M$ be a finite von Neumann algebra with center $Z(M)$. Then there is a unique normal, faithful, tracial conditional expectation $E: M \to Z(M)$, called the \textit{center-valued trace}. Furthermore, for each $x \in M$, there exists a sequence $x_i$, where each $x_i$ is a finite convex combination of unitary conjugates of $x$, such that $x_i \to E[x]$ in $\norm{\cdot}_\infty$.
	\end{thm}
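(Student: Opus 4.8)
The plan is to prove the averaging (``furthermore'') statement first and then harvest the center-valued trace $E$ from it. Writing a general $x \in M$ as $\re x + i\im x$ and handling real and imaginary parts separately reduces everything to self-adjoint elements, so fix $x = x^*$ and set
\[
d(x) = \inf_{z \in Z(M)_{\operatorname{sa}}} \norm{x - z},
\]
and let $K_x$ denote the norm-closed convex hull of the unitary orbit $\{uxu^* : u \in U(M)\}$. Then $K_x \subseteq M_{\operatorname{sa}}$, every element of $K_x$ has norm at most $\norm{x}$, and $K_x \subseteq M_+$ when $x \geq 0$; moreover each averaging operator $y \mapsto \sum_j \lambda_j u_j y u_j^*$ is a unital positive contraction fixing $Z(M)$ pointwise, so it maps $K_x$ into itself and does not increase $d(\cdot)$.

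The crux is the \emph{Dixmier contraction lemma}: for every $x = x^*$ there is a finite average $y = \sum_j \lambda_j u_j x u_j^*$ with $d(y) \leq \tfrac{3}{4} d(x)$. I would prove this by splitting the finite algebra $M$ along its central type decomposition into a type $\mathrm{I}$ part and a type $\mathrm{II}_1$ part and treating each separately; after subtracting a central element we may assume $d(x)$ is close to $\norm{x}$. Using the Comparison Theorem for projections one produces, in the type $\mathrm{I}_n$ homogeneous summands, mutually equivalent abelian projections cyclically permuted by unitaries of $M$ (matrix-unit permutations), and in the type $\mathrm{II}_1$ part a halving projection $p \sim 1 - p$ implemented by a symmetry $u \in U(M)$; averaging $x$ over the associated self-adjoint unitaries symmetrizes $x$ across these projections and shrinks the distance to the center by the universal factor $\tfrac{3}{4}$. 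This structural argument, combining comparison theory with the explicit construction of the permuting and halving unitaries, is where essentially all the difficulty lies and is the main obstacle.

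Granting the lemma, I would iterate: set $x_0 = x$ and let $x_{k+1}$ be a finite average of unitary conjugates of $x_k$ with $d(x_{k+1}) \leq \tfrac{3}{4} d(x_k)$, so that $d(x_k) \leq (3/4)^k d(x)$; since a convex combination of unitary conjugates of a convex combination of unitary conjugates of $x$ is again one for $x$, each $x_k$ lies in $K_x$. Choosing central $z_k$ with $\norm{x_k - z_k}$ close to $d(x_k)$ and using that the averaging operators fix $z_k$ and are contractions gives $\norm{x_{k+1} - z_k} \leq \norm{x_k - z_k}$; hence $\norm{z_{k+1} - z_k}$ is summable, $(z_k)$ is norm-Cauchy with a central limit $z$, and $\norm{x_k - z} \to 0$. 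Thus $z \in K_x \cap Z(M)$, and the explicit finite averages $x_k$ converge in operator norm to $z$, which is exactly the ``furthermore''.

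Finally I would define $E(x) := z$ for self-adjoint $x$, extended complex-linearly, and show $K_x \cap Z(M) = \{z\}$ so that $E$ is well defined. For this I would invoke the standard structural fact that a finite von Neumann algebra admits a separating family of normal tracial states; each such $\tau$ is affine, norm-continuous and invariant under unitary conjugation, hence constant on $\overline{K_x}$, and therefore agrees on any two central points of $K_x$, forcing uniqueness. The same separation yields additivity, traciality and faithfulness of $E$ by comparing $\tau \circ E = \tau$ on $M_{\operatorname{sa}}$ across all such $\tau$, while $E(z) = z$ for $z \in Z(M)$ together with $\norm{E(x)} \leq \norm{x}$ exhibits $E$ as a contractive projection onto $Z(M)$, i.e.\ a conditional expectation by Tomiyama's theorem; normality follows from $\tau \circ E = \tau$ and separation applied to bounded increasing nets. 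Uniqueness of $E$ is then immediate, since any normal tracial conditional expectation $\Phi : M \to Z(M)$ is also constant on $K_x$ and fixes $Z(M)$, whence $\Phi(x) = z = E(x)$ for all self-adjoint $x$ and thus $\Phi = E$.
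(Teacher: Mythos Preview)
The paper does not give its own proof of this theorem; it is quoted as a classical result with a citation to Dixmier's book, and is used as a black box in the subsequent arguments (notably Proposition~\ref{prop: center of ultrafiber}). Your outline is precisely the classical Dixmier argument from that reference: reduce to self-adjoints, prove a contraction lemma for $d(\cdot)$ using comparison of projections (matrix-unit permutations in the type $\mathrm{I}_n$ summands, a halving symmetry in the type $\mathrm{II}_1$ part), iterate to land in $K_x \cap Z(M)$, and then read off $E$.

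One caution on the uniqueness step. You invoke the existence of a separating family of normal tracial states on a finite von Neumann algebra to show $K_x \cap Z(M)$ is a singleton. In many standard treatments that separating family is itself obtained \emph{from} the center-valued trace (compose $E$ with normal states on $Z(M)$), so as written your argument risks circularity. You can avoid this either by establishing the separating family of traces independently (e.g.\ via reduction theory or the type $\mathrm{I}$/$\mathrm{II}_1$ decomposition you already invoked), or by arguing uniqueness directly: show that for any $y \in K_x$ one has $K_y \subseteq K_x$ and hence $K_x \cap Z(M) \subseteq K_y$; applying this with $y = z' \in K_x \cap Z(M)$ gives $z \in K_{z'} = \{z'\}$. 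Either route closes the gap.
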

	
	Now the center of the ultrafiber can be evaluated as follows.
	
	\begin{prop} \label{prop: center of ultrafiber}
		Let $M$ be a finite von Neumann algebra, $N \subseteq Z(M)$ be a subalgebra of its center, $E: M \to N$ be a normal, tracial, faithful conditional expectation, and $\mathcal{U}$ be a character on $N$.  Let $\pi_{E,\cU}: M \to M^{/E,\cU}$ be the canonical projection map.  Then
		\[
		Z(M^{/E,\cU}) = \pi_{E,\cU}(Z(M)) \cong Z(M)^{/E|_{Z(M)},\cU}.
		\]
		In particular, if $N = Z(M)$ and $E$ is the center-valued trace, then $M^{/E, \mathcal{U}}$ is a factor.
	\end{prop}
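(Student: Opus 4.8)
The plan is to prove the two inclusions $\pi_{E,\cU}(Z(M)) \subseteq Z(M^{/E,\cU})$ and $Z(M^{/E,\cU}) \subseteq \pi_{E,\cU}(Z(M))$ separately, and then identify $\pi_{E,\cU}(Z(M))$ with the ultrafiber of the center. Write $\pi = \pi_{E,\cU}$ for brevity. The inclusion $\pi(Z(M)) \subseteq Z(M^{/E,\cU})$ is immediate: if $z \in Z(M)$ then $zy = yz$ for all $y \in M$, so $\pi(z)\pi(y) = \pi(y)\pi(z)$, and since $\pi$ is surjective $\pi(z)$ is central. For the claimed isomorphism I would observe that the restriction $E|_{Z(M)} : Z(M) \to N$ is again a normal, faithful, tracial conditional expectation, and that its defining ideal is exactly $I_{E|_{Z(M)},\cU} = \{z \in Z(M) : \cU(E(z^*z)) = 0\} = Z(M) \cap I_{E,\cU}$. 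Hence $z \mapsto \pi(z)$ descends to a trace-preserving $*$-isomorphism of $Z(M)^{/E|_{Z(M)},\cU} = Z(M)/(Z(M)\cap I_{E,\cU})$ onto $\pi(Z(M))$; since the domain is a tracial von Neumann algebra by Proposition \ref{prop: ultrafiber-well-defined} and the map is $\norm{\cdot}_2$-isometric, the image is $\norm{\cdot}_2$-complete in its unit ball and is therefore a von Neumann subalgebra.

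The heart of the argument is the reverse inclusion $Z(M^{/E,\cU}) \subseteq \pi(Z(M))$, for which I would apply the generalized Dixmier averaging theorem in \emph{operator norm}. Let $a \in Z(M^{/E,\cU})$ and choose any lift $x \in M$ with $\pi(x) = a$ (possible since $\pi$ is surjective, and with $\norm{x} = \norm{a}$ by Proposition \ref{prop: ultrafiber-well-defined} if desired). Let $E_Z : M \to Z(M)$ denote the center-valued trace. By Dixmier averaging, for each $\epsilon > 0$ there are unitaries $u_1,\dots,u_n \in M$ and weights $\lambda_j \geq 0$ with $\sum_j \lambda_j = 1$ such that
\[
\Bigl\lVert \sum_{j=1}^n \lambda_j u_j x u_j^* - E_Z(x)\Bigr\rVert \leq \epsilon.
\]
Applying $\pi$, which is contractive for the operator norm, gives $\norm{\sum_j \lambda_j \pi(u_j) a \pi(u_j)^* - \pi(E_Z(x))} \leq \epsilon$. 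But $a$ is central, so $\pi(u_j) a \pi(u_j)^* = a$ for every $j$ and the convex combination collapses to $a$; hence $\norm{a - \pi(E_Z(x))} \leq \epsilon$. Letting $\epsilon \to 0$ yields $a = \pi(E_Z(x)) \in \pi(Z(M))$, as wanted.

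I expect the main subtlety to be precisely the interplay between the two conditional expectations: Dixmier averaging is taken with respect to the center-valued trace $E_Z$ onto all of $Z(M)$, while the ultrafiber is built from the possibly smaller expectation $E$ onto $N$. The point that makes everything work is that Dixmier averaging delivers operator-norm (not merely $\norm{\cdot}_2$) approximation, which survives the quotient since $\pi$ is a $*$-homomorphism, so no compatibility between $E_Z$ and $E$ beyond multiplicativity of $\pi$ is needed. Finally, for the corollary I would specialize to $N = Z(M)$ with $E = E_Z$: then $E|_{Z(M)} = \id_{Z(M)}$, so $Z(M^{/E,\cU}) \cong Z(M)^{/\id,\cU} = Z(M)/\{z \in Z(M) : \cU(z^*z) = 0\}$. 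Since $\cU$ is a character on the commutative algebra $Z(M)$ we have $\cU(z^*z) = |\cU(z)|^2$, so this ideal is $\ker \cU$, and $Z(M)/\ker\cU \cong \C$; thus $M^{/E,\cU}$ is a factor.
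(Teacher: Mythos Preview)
Your proof is correct and follows essentially the same approach as the paper: both use the generalized Dixmier averaging theorem in operator norm to show that any central element of $M^{/E,\cU}$ equals $\pi_{E,\cU}(E_Z(x))$ for any lift $x$, and both identify $Z(M)^{/E|_{Z(M)},\cU}$ with $\pi_{E,\cU}(Z(M))$ by observing that the defining ideals coincide. Your explicit remark about why operator-norm approximation (rather than $\norm{\cdot}_2$) is what makes the argument go through is a nice addition, but the underlying strategy is identical.
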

	
	\begin{proof}
		First, we show that $Z(M^{/E,\cU}) = \pi_{E,\cU}(Z(M))$.  It is immediate that $\pi_{E,\cU}(Z(M)) \subseteq Z(M^{/E,\cU})$ since $\pi_{E,\cU}$ is a $*$-homomorphism.  Conversely, suppose that $x \in Z(M^{/E,\cU})$, and fix $\mathbbm{x} \in M$ with $\pi_{E,\cU}(\mathbbm{x}) = x$.  Let $E_0: M \to Z(M)$ be the center-valued trace.  By the generalized Dixmier averaging theorem, there is a sequence
		\[
		\mathbbm{x}_i = \frac{1}{n_i} \sum_{j=1}^{n_i} \mathbbm{u}_{i,j} \mathbbm{x}_i \mathbbm{u}_{i,j}^*
		\]
		of convex combinations of unitary conjugates of $\mathbbm{x}$ such that $\mathbbm{x}_i \to E_0(\mathbbm{x}) \in Z(M)$ in norm. Since $\pi_{E,\cU}(\mathbbm{u}_{i,j}) \in U(M^{/E, \mathcal{U}})$ and $\pi_{E,\cU}(\mathbbm{x}) = x \in Z(M^{/E, \mathcal{U}})$, we see that $\pi_{E,\cU}(\mathbbm{u}_{i,j}\mathbbm{x}\mathbbm{u}_{i,j}^\ast) = \pi_{E,\cU}(\mathbbm{x})$. Hence, $\pi_{E,\cU}(\mathbbm{x}_i) = \pi_{E,\cU}(\mathbbm{x})$, and so $\pi_{E,\cU}(E_0(\mathbbm{x})) = x$, which implies that $x \in \pi_{E,\cU}(Z(M))$ as desired.
		
		Next, we give an isomorphism $\pi_{E,\cU}(Z(M)) \cong Z(M)^{/E|_{Z(M)},\cU}$.  First, note that $E|_{Z(M)}$ is a faithful normal conditional expectation from $Z(M)$ onto $N$, and so $Z(M)^{/E|_{Z(M)},\cU}$ is well-defined.  Two elements $\mathbbm{x}$ and $\mathbbm{y}$ in $Z(M)$ represent the same element of $Z(M)^{/E|_{Z(M)},\cU}$ if and only if $E((\mathbbm{x} - \mathbbm{y})^*(\mathbbm{x} - \mathbbm{y})) = 0$ if and only if $\mathbbm{x}$ and $\mathbbm{y}$ represent the same element in $M^{/E,\cU}$.  Hence, there is a well-defined injective $*$-homomorphism $Z(M)^{/E|_{Z(M)},\cU} \to M^{/E,\cU}$, and its image is clearly equal to $\pi_{E,\cU}(Z(M))$.
		
		In the case that $N = Z(M)$, then $Z(M)^{/E|_{Z(M)},\cU} \cong \C$ since $\cU$ is a pure state on $Z(M)$, and hence $Z(M^{/E,\cU}) \cong \C$, so $M^{/E,\cU}$ is a factor.
	\end{proof}
	
	\subsection{Proof of Corollary \ref{introcol: Ozawa's question EE} from Proposition \ref{introprop: ultraproduct EE}}\label{tensor-proof-section}
	
	Now we are ready to give a short proof of Corollary \ref{introcol: Ozawa's question EE} showing that if $M$ and $N$ are $\mathrm{II}_1$ factors and $M \otimes L^\infty[0,1] \equiv N \otimes L^\infty[0,1]$, then $M \equiv N$.  We state it here in a slightly more general form.  The only ingredients needed are the Keisler--Shelah theorm for ordinary ultraproducts, the generalized ultraproduct construction, and Proposition \ref{introprop: ultraproduct EE} which shows that $M$ is elementarily equivalent to each of its generalized ultrapowers, which we will be proven later in Proposition \ref{prop: Los ultrapower}.
	
	\begin{prop} \label{prop: general Ozawa's question EE}
		Let $M$ and $N$ be $\mathrm{II}_1$ factors.  Let $A$ and $B$ be diffuse commutative tracial von Neumann algebras.  If $M \overline{\otimes} A \equiv N \overline{\otimes} B$, then $M \equiv N$.
	\end{prop}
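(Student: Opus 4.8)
The plan is to use the Keisler--Shelah theorem to convert the elementary equivalence $M \overline{\otimes} A \equiv N \overline{\otimes} B$ into an honest isomorphism of ordinary ultrapowers, and then to ``cut down'' that isomorphism by a character on the center so as to produce a factor on each side that is elementarily equivalent to $M$, respectively $N$. The bridge between the ordinary ultrapower of the tensor product and the generalized ultrapower of the factor is supplied by Proposition \ref{prop: center of ultrafiber} together with a transitivity property of the ultrafiber construction.

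First I would apply the Keisler--Shelah theorem for ordinary ultraproducts to obtain an index set $I$, an ultrafilter $\mathcal{W}$ on $I$, and a trace-preserving $*$-isomorphism $\Theta \colon (M \overline{\otimes} A)^{\mathcal{W}} \xrightarrow{\ \cong\ } (N \overline{\otimes} B)^{\mathcal{W}}$. Next I would recognize the ordinary ultrapower as an ultrafiber: writing $Q = \ell^\infty(I) \overline{\otimes} A \overline{\otimes} M$, with $E \colon Q \to \ell^\infty(I)$ the conditional expectation $\id \otimes \tau_A \otimes \tau_M$ and $\mathcal{W}$ viewed as a character on $\ell^\infty(I)$, one has $(M \overline{\otimes} A)^{\mathcal{W}} = Q^{/E,\mathcal{W}}$. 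Since $M$ is a factor, $Z(Q) = \ell^\infty(I) \overline{\otimes} A$, so Proposition \ref{prop: center of ultrafiber} gives
\[
Z\big((M \overline{\otimes} A)^{\mathcal{W}}\big) = \pi_{E,\mathcal{W}}\big(\ell^\infty(I) \overline{\otimes} A\big) \cong A^{\mathcal{W}},
\]
and symmetrically on the $N$ side. Because any $*$-isomorphism of finite von Neumann algebras intertwines center-valued traces by their uniqueness, $\Theta$ restricts to an isomorphism of centers and commutes with the center-valued traces.

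The key step, and the main obstacle, is a transitivity lemma for ultrafibers: cutting $(M\overline{\otimes} A)^{\mathcal{W}}$ by a character $\cV$ on its center reproduces a single generalized ultrapower of $M$. Concretely, let $\tilde{E} = \id \otimes \tau_M \colon Q \to \ell^\infty(I)\overline{\otimes}A$ be the center-valued trace of $Q$. One checks that $\tilde E$ preserves the ideal $I_{E,\mathcal{W}}$, using $E = (\id \otimes \tau_A)\circ \tilde E$ and the Kadison--Schwarz inequality, so $\pi_{E,\mathcal{W}}\circ \tilde E$ descends to a normal faithful tracial conditional expectation onto $Z((M\overline{\otimes}A)^{\mathcal{W}})$, which must equal the center-valued trace $E_0$ by uniqueness; that is, $E_0 \circ \pi_{E,\mathcal{W}} = \pi_{E,\mathcal{W}}\circ \tilde E$. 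Setting $\tilde{\cU} = \cV \circ \pi_{E,\mathcal{W}}|_{\ell^\infty(I)\overline{\otimes}A}$, a direct computation of the defining ideals then yields
\[
\big((M\overline{\otimes}A)^{\mathcal{W}}\big)^{/E_0,\cV} = Q^{/\tilde E,\tilde\cU} = M^{\tilde\cU},
\]
the generalized ultrapower of $M$ in the sense of Definition \ref{def: gen ultrapower}. By Proposition \ref{introprop: ultraproduct EE} this ultrafiber is elementarily equivalent to $M$, and the analogous statement holds on the $N$ side.

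Finally I would assemble the pieces. Choose a character $\cV$ on $Z((N\overline{\otimes}B)^{\mathcal{W}})$ and pull it back to $\cV \circ \Theta$ on $Z((M\overline{\otimes}A)^{\mathcal{W}})$. Since $\Theta$ is a trace-preserving isomorphism intertwining the center-valued traces, it descends to an isomorphism of the corresponding factor ultrafibers, and combining this with the previous step and Proposition \ref{introprop: ultraproduct EE} on both sides gives
\[
M \equiv M^{\tilde\cU} \cong \big((M\overline{\otimes}A)^{\mathcal{W}}\big)^{/E_0,\,\cV\circ\Theta} \cong \big((N\overline{\otimes}B)^{\mathcal{W}}\big)^{/E_0,\,\cV} \cong N^{\tilde\cV} \equiv N,
\]
whence $M \equiv N$. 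I expect the transitivity lemma---verifying that the iterated ultrafiber collapses to a single generalized ultrapower via the identity $E_0\circ\pi_{E,\mathcal{W}} = \pi_{E,\mathcal{W}}\circ\tilde E$---to be the only genuinely technical point; the rest is bookkeeping with Proposition \ref{prop: center of ultrafiber} and Proposition \ref{introprop: ultraproduct EE}.
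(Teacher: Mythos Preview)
Your proposal is correct and follows essentially the same route as the paper: Keisler--Shelah, identification of the center of the ultrapower via Proposition~\ref{prop: center of ultrafiber}, a transitivity step collapsing the iterated ultrafiber to a generalized ultrapower of $M$ (resp.\ $N$), and then Proposition~\ref{introprop: ultraproduct EE}. The only cosmetic difference is that the paper justifies the identity $E_0 \circ \pi_{E,\mathcal{W}} = \pi_{E,\mathcal{W}} \circ \tilde{E}$ directly via Dixmier averaging rather than through uniqueness of the center-valued trace, which spares you from having to verify normality and faithfulness of the descended expectation.
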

	
	\begin{proof}[Proof of Proposition \ref{prop: general Ozawa's question EE} / Corollary \ref{introcol: Ozawa's question EE}]  Since $A \mathbin{\overline{\otimes}} M \equiv B \mathbin{\overline{\otimes}} N$, the Keisler--Shelah theorem implies that $A \mathbin{\overline{\otimes}} M \equiv B \mathbin{\overline{\otimes}} N$ admit isomorphic ultrapowers. In particular, there exists a character $\mathcal{U}$ on some abelian von Neumann algebra $C$ s.t. there exists a trace-preserving isomorphism $\pi: (A \mathbin{\overline{\otimes}} M)^\mathcal{U} \to (B \mathbin{\overline{\otimes}} N)^\mathcal{U}$.  Since $M$ is a factor, $Z(A \mathbin{\overline{\otimes}} M) = A$, so by Proposition \ref{prop: center of ultrafiber}, the center of $(A \mathbin{\overline{\otimes}} M)^\mathcal{U}$ is $A^\mathcal{U}$. Similarly, $Z((B \mathbin{\overline{\otimes}} N)^\mathcal{U}) = B^\mathcal{U}$. Hence, $\pi$ must restrict to a trace-preserving isomorphism between $A^\mathcal{U}$ and $B^\mathcal{U}$. We may thus identify these algebras via $\pi$.
		
		Let $\mathcal{V}$ be any character on $A^\mathcal{U} = B^\mathcal{U}$. Let $E$ denote the center-valued trace of $(A \mathbin{\overline{\otimes}} M)^\mathcal{U}$ as well as the center-valued trace of $(B \mathbin{\overline{\otimes}} N)^\mathcal{U}$. We again identify the two algebras via $\pi$ and note that the $\pi$ must also preserve the center-valued trace. Hence, we must have
		\begin{equation*}
			[(A \mathbin{\overline{\otimes}} M)^\mathcal{U}]^{/E, \mathcal{V}} = [(B \mathbin{\overline{\otimes}} N)^\mathcal{U}]^{/E, \mathcal{V}}
		\end{equation*}
		Note that $(A \mathbin{\overline{\otimes}} M)^\mathcal{U}$ is, by definition, a quotient of $C \mathbin{\overline{\otimes}} A \mathbin{\overline{\otimes}} M$. Let the natural quotient map be $q: C \mathbin{\overline{\otimes}} A \mathbin{\overline{\otimes}} M \to (A \mathbin{\overline{\otimes}} M)^\mathcal{U}$. Then,
		\begin{equation*}
			[(A \mathbin{\overline{\otimes}} M)^\mathcal{U}]^{/E, \mathcal{V}} = (C \mathbin{\overline{\otimes}} A \mathbin{\overline{\otimes}} M)/q^{-1}(I_{E, \mathcal{V}})
		\end{equation*}
		We also have
		\begin{equation*}
			q^{-1}(I_{E, \mathcal{V}}) = \{x \in C \mathbin{\overline{\otimes}} A \mathbin{\overline{\otimes}} M: \mathcal{V}(E(q(x^\ast x))) = 0\}
		\end{equation*}
		It follows from Dixmier averaging as in the proof of Proposition \ref{prop: center of ultrafiber} that $E \circ q = q \circ (\mathrm{Id} \otimes \mathrm{Id} \otimes \tau)$. Since $q$ restricts to a *-homomorphism $C \mathbin{\overline{\otimes}} A \to A^\mathcal{U}$, we see that $\mathcal{V} \circ q|_{C \mathbin{\overline{\otimes}} A}$ is a character on $C \mathbin{\overline{\otimes}} A$. Hence,
		\begin{equation*}
			q^{-1}(I_{E, \mathcal{V}}) = I_{\mathrm{Id} \otimes \mathrm{Id} \otimes \tau, \mathcal{V} \circ q}.
		\end{equation*}
		But this means $[(A \mathbin{\overline{\otimes}} M)^\mathcal{U}]^{/E, \mathcal{V}} = M^{\mathcal{V} \circ q}$. Similarly, $[(B \mathbin{\overline{\otimes}} N)^\mathcal{U}]^{/E, \mathcal{V}}$ is a generalized ultrapower of $N$ as well. Hence, $M$ and $N$ admit generalized ultrapowers that are isomorphic, and so $M \equiv N$ by Proposition \ref{introprop: ultraproduct EE} and the transitivity of elementary equivalence.
	\end{proof}
	
	\section{{\L}o{\'s}'s theorem and elementary equivalence} \label{sec: Los}
	
	\subsection{Model theory background}
	
	To state and prove {\L}o{\'s}'s theorem in this section, we must first review some background from continuous model theory, including formulas, sentences, and theories.  As this section is largely for readers who are less familiar with model theory, we aim to give minimal working definitions of the concepts for tracial von Neumann algebras, rather than completely general and proper definitions.  Unlike \cite{BYBHU08}, we do not aim to describe metric structures in general.  Unlike \cite{FHS2014a}, we do not need to prove that tracial von Neumann algebras fit into the general framework of metric structures, i.e., that they can be axiomatized in some metric language, since this has already been done.  Hence, we can avoid the technicalities of setting up the sorts or domains, and defining the addition, multiplication, and adjoint functions on these domains, that were used in \cite{FHS2014a}, by simply working on the unit ball.  We remark that another axiomatization was given in \cite[Proposition 29.4]{BYIT2024} which works entirely on the unit ball and uses a universal $\mathrm{C}^*$-algebra to define the function symbols.  For further background on continuous model theory, especially for tracial von Neumann algebras, refer to \cite{BYBHU08,FHS2014a,Goldbring2023spectralgap,JekelCoveringEntropy,Hart2023,GH2023}.
	
	Formulas in continuous logic are analogous to formulas in classical first-order logic, that is, expressions composed of basic statements by applying quantifiers (for all, there exists) and connectives (and, or, not, if/then, etc.).  The basic statements depend on the category of objects that we working with.  The \emph{signature} gives the operations and relations for the category, which are used to generate the corresponding \emph{language}, the set of formulas.  For instance, for fields, there are binary operations are $+$ and $\times$, $0$-ary operations (constant symbols) $0$ and $1$, and the relation $=$, so a basic statement in variables $x$ and $y$ might be $x \cdot (y + z) = x + y$.  Then first-order statements for fields are obtained using quantifiers and connectives.  In continuous model theory, we make the following changes:
	\begin{itemize}
		\item Instead of true/false-valued predicates, we consider real-valued predicates.
		\item Instead of logical operations that input and output true/false values, the connectives are continuous functions $\R^k \to \R$.
		\item Instead of $\forall$ and $\exists$, the quantifiers are $\sup$ and $\inf$.
	\end{itemize}
	
	\begin{defn}
		Let $X$ be an infinite set.  \emph{Formulae} in the language of tracial von Neumann algebras with \emph{variables} from the set $X$ are all formal expressions constructed recursively as follows:
		\begin{enumerate}
			\item A \emph{basic formula} is an expression of the form $\varphi = \re\tr(p(x_1,\dots,x_n))$ and $\varphi = \im\tr(p(x_1,\dots,x_n))$ are formulae, for any $*$-polynomial $p$ in variables $x_1,\dots,x_n$ from $X$.
			\item If $\varphi_i$ is a formula for all $1 \leq i \leq n$ and if $f: \mathbb{R}^k \to \mathbb{R}$ is continuous, then $\varphi = f(\varphi_1, \cdots, \varphi_k)$ is a formula.
			\item If $\varphi$ is a formula and $y \in X$, then $\sup_y \varphi$ and $\inf_y \varphi$ are formulas.
		\end{enumerate}
	\end{defn}
	
	\begin{ex}
		An example of a formula would be
		\[
		\varphi = \inf_z (\exp(\sup_y \re \tr(xy - z^3)) - \im \tr((z+x)^4).
		\]
		When we evaluate this formula in a particular tracial von Neumann algebra $M$, the suprema and infima will be taken over the unit ball.
	\end{ex}
	
	In a formula, each occurrence of a variable is either \emph{free} or \emph{bound} to a quantifier $\sup$ or $\inf$.  In basic formulas, all occurrences of variables are free, and when applying connectives the freeness or boundness of each occurrence of a variable is preserved.  When applying $\sup_y$ or $\inf_y$, all free occurrences of the variable $y$ inside the formula become bound to the new quantifier, and the freeness or boundness of the others remains unchanged.  It is possible that some variable occurs multiple times in a formula and one occurrence is bound to a quantifier while another occurrence is not, for instance if we wrote $\tau(x) - \sup_x \tau(xy)$.  However, in this case, we could rename the bound occurrence of $x$ as $z$ and use the equivalent expression $\tau(x) - \sup_z \tau(zy)$.  Thus, we will always assume that our formulas have a distinct variable name associated to each quantifier.  Moreover, we will usually list the free variables as arguments for each formula, so that
	\[
	\varphi(x_1,\dots,x_n)
	\]
	will usually denote a formula whose free variables are $\{x_1,\dots,x_n\}$, and more generally a formula whose free variables are a subset of $\{x_1,\dots,x_n\}$.
	
	Just like non-commutative polynomials, formulas for tracial von Neumann algebra are formal expressions, but they can be evaluated on particular elements in a particular von Neumann algebra $M$ by plugging in operators into the non-commutative polynomials and evaluating the suprema and infima over the unit ball of $M$.  (In fact, one can define formulas with quantifiers $\sup_{x \in (M)_r}$ and $\inf_{x \in (M)_r}$ for each $r > 0$; but by a change of variables $x \mapsto x/r$, this can be transformed into a supremum or infimum over the unit ball, hence here we only consider the unit ball.)
	
	\begin{defn}[Interpretation of formulas] \label{def: interpretation}
		Let $\varphi(x_1,\dots,x_n)$ be a formula for tracial von Neumann algebras.  Let $\mathcal{M} = (M,\tau)$ be a tracial von Neumann algebra.  Then the \emph{interpretation of $\varphi$ in $\mathcal{M}$} is the mapping $\phi^{\mathcal{M}}: M^n \to \R$ described as follows:
		\begin{itemize}
			\item If $\varphi(x_1,\dots,x_n) = \re \tr(p(x_1,\dots,x_n))$ is a basic formula, then $\varphi^{\mathcal{M}}(x_1,\dots,x_n) = \tau(p(x_1,\dots,x_n))$, that is, we evaluate the polynomial on the given $x_1$, \dots, $x_n \in M$.
			\item If $\varphi = f(\varphi_1,\dots,\varphi_k)$ for a connective $f: \R^k \to \R$, where $\varphi_i = \varphi_i(x_1,\dots,x_n)$, then
			\[
			\varphi^{\mathcal{M}}(x_1,\dots,x_n) = f(\varphi^{\mathcal{M}}(x_1,\dots,x_n),\dots,\varphi_k^{\mathcal{M}}(x_1,\dots,x_n)).
			\]
			\item If $\varphi(x_1,\dots,x_n) = \sup_y \psi(x_1,\dots,x_n,y)$, then
			\[
			\varphi^{\mathcal{M}}(x_1,\dots,x_n) = \sup_{y \in M_1} \psi^{\mathcal{M}}(x_1,\dots,x_n,y).
			\]
		\end{itemize}
	\end{defn}
	
	\begin{ex}
		Consider the formula $\varphi(x) = \sup_y \tr((xy - yx)^*(xy - yx))^{1/2}$.  Then
		\[
		\varphi^{\mathcal{M}}(x) = \sup_{y \in (M)_1} \norm{xy - yx}_2.
		\]
		Then $\varphi^{\mathcal{M}}(x) = 0$ if and only if $x \in Z(M)$.
	\end{ex}
	
	A formula in $(x_1,\dots,x_n)$ expresses certain information about $\mathcal{M}$ and $x_1$, \dots, $x_n$.  If there are no free variables, the formula is called a \emph{sentence}, and its interpretation in $\mathcal{M}$ expresses certain first-order information about $\mathcal{M}$ itself.  This information is what we refer to as the \emph{theory of $\mathcal{M}$}.  Note that in discrete logic, the theory of some object $M$ is the set of all sentences in the language that are true for $M$.  Since our sentences take real values, we describe the theory of $M$ as the \emph{functional} that assigns a real value to each sentence.  More precisely, we use the following definition.
	
	\begin{defn} \label{def: EE} ~
		\begin{itemize}
			\item A \emph{sentence} is a formula with no free variables.
			\item Denoting the set of sentences by $\mathcal{S}$, the \emph{theory of $\mathcal{M}$} is the map $\operatorname{Th}(\mathcal{M}): \mathcal{S} \to \R, \varphi \mapsto \varphi^{\mathcal{M}}$. 
			\item Two tracial von Neumann algebras $\mathcal{M}$ and $\mathcal{N}$ are \emph{elementarily equivalent}, written $\mathcal{M} \equiv \mathcal{N}$, if $\operatorname{Th}(\mathcal{M}) = \operatorname{Th}(\mathcal{N})$, or in other words, $\varphi^{\mathcal{M}} = \varphi^{\mathcal{N}}$ for all sentences $\varphi$.
			\item An embedding $\iota: M \to N$ of tracial von Neumann algebras is said to be \emph{elementary} if $\varphi^N(\iota(x_1),\dots,\iota(x_n)) = \varphi^M(x_1,\dots,x_n)$ for all formulas $\varphi$. 
		\end{itemize}
	\end{defn}
	
	In order to better describe the topology for the space of complete theories and to make the above definitions amenable to functional analysis, we will first describe how the set of formulas and the set of sentences in particular can be completed to a real $\mathrm{C}^*$-algebra, and the theory of $\mathcal{M}$ corresponds to a character on this $\mathrm{C}^*$-algebra.
	
	Let $\mathcal{F}_X$ denote the set of formulas with free variables contained in the set $X$.  We observe that since addition and multiplication are continuous functions $\R^2 \to \R$, they can be applied as connectives to formulas.  This implies that $\mathcal{F}_X$ is an algebra over $\R$.  In order to obtain a $\mathrm{C}^*$-algebra, we need a (semi)norm.
	
	\begin{defn}
		For a formula $\varphi(x_1,\dots,x_n)$ for tracial von Neumann algebras, the \emph{uniform seminorm} is given by
		\[
		\norm{\varphi}_u := \sup \{ |\varphi^{\mathcal{M}}(x_1,\dots,x_n)|: \mathcal{M} = (M,\tau), x_1, \dots, x_n \in (M)_1 \},
		\]
		where we take the supremum over all tracial von Neumann algebras $(M,\tau)$ as well as $x_1$, \dots, $x_n$.
	\end{defn}
	
	It is easy to show by induction that $\norm{\varphi}_u < \infty$ for every formula $\varphi$ (this is a special case of Lemma \ref{uniform-bound} below).  One starts with the basic formulas $\tr(p(x_1,\dots,x_n))$ and then shows that boundedness is preserved when applying connectives and quantifiers.  Similarly, it is an exercise to check that for formulas $\varphi$ and $\psi$ and $c \in \R$,
	\begin{align*}
		\norm{\varphi + \psi}_u &\leq \norm{\varphi}_i + \norm{\psi}_u \\
		\norm{c \varphi}_u &= |c| \norm{\varphi}_u \\
		\norm{\varphi \psi}_u & \leq \norm{\varphi}_u \norm{\psi}_u \\
		\norm{\varphi^2}_u &= \norm{\varphi}_u^2.
	\end{align*}

	\begin{defn} \label{def: C* algebra of predicates}
		Fix a countable index set $I$ (for instance, $I = \N$).  Let $\mathcal{F}_X$ be the set of formulas $\varphi$ with variables in $X \sqcup I$ such that the free variables of $\varphi$ are contained in $X$ and the bound variables of $\varphi$ are contained in $I$.  Thus, for instance $\mathcal{F}_{\varnothing}$ is the set of sentences with variables in $I$.  Then let $\mathcal{P}_X$ be the separation-completion of $\mathcal{F}_X$ with respect to the uniform norm.  The elements of $\mathcal{P}_X$ are called \emph{definable predicates} with free variables in $X$.
	\end{defn}
	
	\begin{lemma}
		$\mathcal{P}_X$ is a commutative real $\mathrm{C}^*$-algebra.  Moreover, for each tracial von Neumann algebra $\mathcal{M}$, there is a unique $*$-homomorphism $\Th(\mathcal{M}): \mathcal{P}_{\varnothing} \to \R$ such that $\Th(\mathcal{M})(\varphi) = \varphi^{\mathcal{M}}$ for sentences $\varphi$.
	\end{lemma}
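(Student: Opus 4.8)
The plan is to realize $\mathcal{P}_X$ concretely as a subalgebra of bounded real-valued functions, from which both the $\mathrm{C}^*$-identity and positivity come essentially for free. First I would check that $\mathcal{F}_X$ carries the structure of a unital commutative $\R$-algebra: the constant connective $1$, the scalar connectives $t \mapsto ct$, and the binary connectives $+$ and $\times$ furnish the operations, while associativity and commutativity of the product hold at the level of interpretations and hence modulo the null subspace $\{\varphi : \norm{\varphi}_u = 0\}$. Combined with the four inequalities recorded just before Definition \ref{def: C* algebra of predicates}, this shows that $\norm{\cdot}_u$ is a submultiplicative algebra seminorm satisfying $\norm{\varphi^2}_u = \norm{\varphi}_u^2$. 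Consequently the separation-completion $\mathcal{P}_X$ is at least a commutative real Banach algebra with isometric (trivial) involution in which the $\mathrm{C}^*$-identity $\norm{\varphi^2} = \norm{\varphi}^2$ holds on a dense subalgebra, hence everywhere by continuity.

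To upgrade this to a genuine real $\mathrm{C}^*$-algebra I would use the following concrete description, which is also the cleanest way to see positivity. By downward Löwenheim--Skolem it suffices to take the supremum defining $\norm{\cdot}_u$ over separable models, which form a set up to isomorphism; fixing a set $S$ of representatives together with unit-ball tuples indexed by $X$, the assignment $\varphi \mapsto (s \mapsto \varphi^{\mathcal{M}_s}(\overline{a}_s))$ is a unital $\R$-algebra homomorphism $\mathcal{F}_X \to \ell^\infty(S,\R)$ that pulls the sup-norm back to $\norm{\cdot}_u$. Therefore $\mathcal{P}_X$ is isometrically isomorphic to the closure of the image, which is a closed real $*$-subalgebra of the commutative real $\mathrm{C}^*$-algebra $\ell^\infty(S,\R)$ and hence itself a commutative real $\mathrm{C}^*$-algebra. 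Alternatively, one can bypass the set-theoretic step and verify the positivity axiom directly: for any formula $\varphi$, the continuous connective $f(t) = (1+t^2)^{-1}$ produces a formula $f(\varphi)$ whose interpretation is everywhere equal to $(1+(\varphi^{\mathcal{M}})^2)^{-1}$, so $f(\varphi)$ is a two-sided inverse of $1+\varphi^2$ in $\mathcal{P}_X$; thus $1+\varphi^2$ is invertible for every (self-adjoint) $\varphi$, which is exactly the positivity condition needed to promote a commutative real Banach $*$-algebra with the $\mathrm{C}^*$-identity to a real $\mathrm{C}^*$-algebra.

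For the second assertion, fix $\mathcal{M} = (M,\tau)$ and consider $\ev_{\mathcal{M}} : \mathcal{F}_\varnothing \to \R$, $\varphi \mapsto \varphi^{\mathcal{M}}$. The recursive clauses of Definition \ref{def: interpretation} show immediately that $\ev_{\mathcal{M}}$ respects the connectives $+$, $\times$, the scalars, and the constant $1$, so it is a unital $\R$-algebra homomorphism; and $|\varphi^{\mathcal{M}}| \leq \norm{\varphi}_u$ by the very definition of the uniform seminorm (the supremum ranges over all models, including $\mathcal{M}$), so $\ev_{\mathcal{M}}$ is contractive. A contractive unital homomorphism into $\R$ annihilates the null subspace and extends, uniquely by density of $\mathcal{F}_\varnothing$ in $\mathcal{P}_\varnothing$ and by continuity, to a unital $*$-homomorphism $\Th(\mathcal{M}) : \mathcal{P}_\varnothing \to \R$ agreeing with $\varphi \mapsto \varphi^{\mathcal{M}}$ on sentences. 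In the concrete picture of the previous paragraph this extension is simply evaluation at the point $\mathcal{M} \in S$, which makes its multiplicativity and uniqueness transparent.

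The step I expect to require the most care is establishing the real $\mathrm{C}^*$-structure, because the identity $\norm{a^2} = \norm{a}^2$ by itself does \emph{not} characterize commutative real $\mathrm{C}^*$-algebras --- $\C$ with trivial involution and modulus norm satisfies it yet is not of the form $C(K,\R)$. This is precisely why the argument must supply positivity, either by exhibiting $\mathcal{P}_X$ inside an honest function algebra $\ell^\infty(S,\R)$ or by using the connective $f(t) = (1+t^2)^{-1}$ to invert $1+\varphi^2$ directly; everything else is a routine verification that interpretation and the uniform seminorm interact well with the algebra operations.
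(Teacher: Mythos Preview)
Your argument is correct and follows the same outline as the paper, which simply asserts that the $\mathrm{C}^*$-structure follows from the four displayed inequalities and that multiplicativity of $\Th(\mathcal{M})$ is immediate from how interpretation handles connectives together with the bound $|\varphi^{\mathcal{M}}| \leq \norm{\varphi}_u$. Your treatment is in fact more careful than the paper's: you correctly point out that $\norm{\varphi^2}_u = \norm{\varphi}_u^2$ alone does not characterize real $\mathrm{C}^*$-algebras and supply the missing positivity condition (via the concrete embedding into $\ell^\infty(S,\R)$ or the connective $(1+t^2)^{-1}$), a step the paper leaves implicit.
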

	
	The fact that $\mathcal{P}_X$ is a commutative real $\mathrm{C}^*$-algebra follows from the inqualities mentioned above.  The fact that evaluation $\varphi \mapsto \varphi^{\mathcal{M}}$ gives a $*$-homomorphism on the algebra of sentences is immediate from the definition of the interpretation, specifically how it behaves on connectives.  Since clearly $|\varphi^{\mathcal{M}}| \leq \norm{\varphi}_u$, this mapping extends to a $*$-homomorphism on $\mathcal{P}_{\varnothing}$.  While above we defined $\operatorname{Th}(\mathcal{M})$ as a linear functional on $\mathcal{S}$, it also defines a linear functional on $\mathcal{P}_{\varnothing}$, and we will denote this functional also by $\operatorname{Th}(\mathcal{M})$ since there is little risk of confusion.
	
	\begin{prop}  \label{prop: theories as characters} ~
		\begin{enumerate}
			\item If $\mathcal{M}$ is a tracial von Neumann algebra, then $\operatorname{Th}(\mathcal{M})$ is a character on $\mathcal{P}_{\varnothing}$.
			\item Conversely, every character on $\mathcal{P}_{\varnothing}$ is equal to $\operatorname{Th}(\mathcal{M})$ for some $\mathcal{M}$.
			\item If $\mathcal{M} = \prod_{\mathcal{U}} \mathcal{M}_i$ for some ultrafilter $\mathcal{U}$ on an index set $I$, then $\operatorname{Th}(\mathcal{M}) = \lim_{\mathcal{U}} \operatorname{Th}(\mathcal{M}_i)$ with respect to the weak-$*$ topology.
		\end{enumerate}
	\end{prop}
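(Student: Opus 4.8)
My plan is to treat the three parts in the order (1), (3), (2), since the surjectivity statement (2) is the real content and will rely on (3). For part (1), the preceding lemma already furnishes a $*$-homomorphism $\operatorname{Th}(\mathcal{M}) : \mathcal{P}_{\varnothing} \to \R$, so I only need to see that it is nonzero, i.e.\ a genuine character. This is immediate from unitality: the constant sentence $\re\tr(1)$ has interpretation $\tau(1) = 1$ in every $\mathcal{M}$, so $\operatorname{Th}(\mathcal{M})$ sends the unit of $\mathcal{P}_{\varnothing}$ to $1$ and is in particular nonzero.

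For part (3), I would observe that this is exactly the classical {\L}o{\'s} theorem, which is the special case of Theorem \ref{introthm: Los} in which $M$ is the von Neumann algebra of bounded families $(x_i)_{i \in I}$ with $M_i$-entries, $N = \ell^\infty(I)$, $E$ is the fiberwise trace, and $\cU$ is the character corresponding to the ultrafilter. For a sentence $\varphi$ the $N$-valued interpretation $\varphi_E^M$ is then the element $i \mapsto \varphi^{\mathcal{M}_i}$ of $\ell^\infty(I)$ and $\cU[\,\cdot\,] = \lim_\cU$, so Theorem \ref{introthm: Los} yields $\varphi^{\prod_\cU \mathcal{M}_i} = \lim_\cU \varphi^{\mathcal{M}_i}$ for every sentence. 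Since sentences are dense in $\mathcal{P}_{\varnothing}$ and the functionals $\operatorname{Th}(\mathcal{M}_i)$ all have norm $1$, this equality extends by continuity to every definable predicate, which is precisely the assertion that $\operatorname{Th}(\prod_\cU \mathcal{M}_i) = \lim_\cU \operatorname{Th}(\mathcal{M}_i)$ in the weak-$*$ topology.

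For part (2), let $X$ denote the Gelfand spectrum of the commutative real $\mathrm{C}^*$-algebra $\mathcal{P}_{\varnothing}$, so that $X$ is compact Hausdorff and $\mathcal{P}_{\varnothing} \cong C(X,\R)$ with $\norm{\varphi}_{C(X)} = \sup_{\chi \in X} |\chi(\varphi)|$, and let $T = \{\operatorname{Th}(\mathcal{M})\} \subseteq X$, which is a legitimate subset of $X$ by part (1). The point is that $T$ is \emph{norming}: by the very definition of the uniform norm, $\norm{\varphi}_u = \sup_{\mathcal{M}} |\varphi^{\mathcal{M}}| = \sup_{t \in T} |t(\varphi)|$. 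If $\overline{T} \neq X$, then picking $\chi_0 \in X \setminus \overline{T}$ and applying Urysohn's lemma to the disjoint closed sets $\{\chi_0\}$ and $\overline{T}$ produces $g \in C(X,\R) = \mathcal{P}_{\varnothing}$ with $\norm{g}_{C(X)} \geq 1$ but $\sup_{t \in T}|t(g)| = 0$, contradicting the norming identity. Hence $\overline{T} = X$, and it remains to show $T$ is closed. For this I will show $T$ is stable under ultralimits. Given $\chi \in \overline{T}$, index by the directed set $I$ of pairs $(F,\epsilon)$ with $F \subseteq \mathcal{P}_{\varnothing}$ finite and $\epsilon > 0$ a model $\mathcal{M}_{F,\epsilon}$ with $|\operatorname{Th}(\mathcal{M}_{F,\epsilon})(\varphi) - \chi(\varphi)| < \epsilon$ for all $\varphi \in F$; the sets $\{(F,\epsilon) : \varphi_0 \in F,\ \epsilon \leq \delta\}$ have the finite intersection property, so they extend to an ultrafilter $\cU$ on $I$, and one checks $\lim_\cU \operatorname{Th}(\mathcal{M}_i)(\varphi_0) = \chi(\varphi_0)$ for every $\varphi_0$. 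Thus $\chi = \lim_\cU \operatorname{Th}(\mathcal{M}_i) = \operatorname{Th}(\prod_\cU \mathcal{M}_i) \in T$ by part (3), giving $T = X$.

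The hard part is (2), and within it the two ingredients are the identification of the uniform norm with the supremum over theories (which makes $T$ norming and hence dense via Urysohn) and the closedness of $T$ under ultralimits; both the connectives and quantifiers in (3) are handled formally once the classical {\L}o{\'s} theorem of Theorem \ref{introthm: Los} is available, so no genuinely new analytic input beyond that theorem is needed here.
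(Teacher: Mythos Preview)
Your proof is correct and follows essentially the same route as the paper: show $\{\operatorname{Th}(\mathcal{M})\}$ is norming for the uniform norm (hence dense in the spectrum via Urysohn), and then use {\L}o{\'s}'s theorem to see it is closed under ultralimits, hence closed. The only cosmetic difference is that you cite the paper's generalized Theorem~\ref{introthm: Los} for part (3), whereas the paper simply invokes the classical discrete {\L}o{\'s} theorem as a known fact; since the classical case is standard and the proof of Theorem~\ref{introthm: Los} does not rely on this proposition, there is no circularity either way.
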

	
	This proposition is well-known in continuous model theory, so we merely sketch the proof in an expository way.  We already explained why (1) is true.  For point (3), we use {\L}o{\'s}'s theorem, which shows that for all sentences $\varphi$,
	\[
	\varphi^{\mathcal{M}} = \lim_{\mathcal{U}} \varphi^{\mathcal{M}_i}.
	\]
	Because sentences give a $\norm{\cdot}_u$-dense subset of $\mathcal{P}_{\varnothing}$ by construction, it follows that $\varphi^{\mathcal{M}} = \lim_{\mathcal{U}} \varphi^{\mathcal{M}_i}$ for all $\varphi \in \mathcal{P}_{\varnothing}$, and thus $\operatorname{Th}(\mathcal{M}) = \lim_{\cU} \operatorname{Th}(\mathcal{M}_i)$ in the weak-$*$ topology.
	
	Finally, for (2), recall by Gelfand duality that $\mathcal{P}_{\varnothing} \cong C(\mathcal{X};\R)$, where $\mathcal{X}$ is the space of characters on $\mathcal{P}_{\varnothing}$ with the weak-$*$ topology, which is a compact Hausdorff space.  We also have by definition of the uniform norm that
	\[
	\norm{\varphi}_u = \sup_{\mathcal{M}} |\operatorname{Th}(\mathcal{M})[\varphi]|.
	\]
	Generally, if $\mathcal{Y}$ is a subset of a compact Hausdorff space $\mathcal{X}$ and if
	\[
	\norm{f}_{C(\mathcal{X};\R)} = \sup_{y \in \mathcal{Y}} |f(y)| \text{ for } f \in C(\mathcal{X};\R),
	\]
	then $\mathcal{Y}$ must be dense in $\mathcal{X}$.  Therefore, in our case the points in $\mathcal{X}$ of the form $\operatorname{Th}(\mathcal{M})$ must be dense in $\mathcal{X}$.  However, by point (2), they also form a closed set, and hence they are all of $\mathcal{X}$, meaning that every character on $\mathcal{P}_{\varnothing}$ is of the form $\operatorname{Th}(\mathcal{M})$.
	
	The statement above that points of the form $\operatorname{Th}(\mathcal{M})$ give a closed set is essentially a functional-analytic viewpoint on the compactness theorem in continuous logic \cite[Theorem 5.8]{BYBHU08}.  We close by recalling a basic fact about separability.
	
	\begin{lemma} \label{lem: separability}
		Let $X$ be a countable set of variables.  The real $\mathrm{C}^*$-algebra $\mathcal{P}_X$ of formulas in the language of tracial von Neumann algebras with free variables from $X$ is separable.  Hence, the unit ball in the dual with the weak-$*$ topology is metrizable, and the space of characters with the weak-$*$ topology is metrizable.  In particular, the space of theories of tracial von Neumann algebras is compact and metrizable.
	\end{lemma}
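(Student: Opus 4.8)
The plan is to exhibit a countable subset $\mathcal{F}_X^0 \subseteq \mathcal{F}_X$ that is dense in the uniform norm; since $\mathcal{F}_X$ is by construction dense in $\mathcal{P}_X$, this yields separability of $\mathcal{P}_X$, and the remaining assertions follow from standard functional analysis. I would define $\mathcal{F}_X^0$ by the same recursion as $\mathcal{F}_X$, but restricting the ingredients to countable families at each stage: for basic formulas I allow only $*$-polynomials $p$ with coefficients in $\Q + i\Q$; for connectives I allow only polynomial maps $\R^k \to \R$ with rational coefficients; and for quantifiers I allow $\sup_y$ and $\inf_y$ with $y$ ranging over the countable index set $I$ of bound variables. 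Since each recursive stage enlarges a countable set by countably many operations, and formulas have finite depth, $\mathcal{F}_X^0$ is countable.

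The core of the argument is to prove, by induction on the construction of a formula, that every $\varphi \in \mathcal{F}_X$ lies in the uniform closure of $\mathcal{F}_X^0$. For a \emph{basic formula} $\re \tr(p)$ or $\im \tr(p)$, every monomial in unit-ball variables has uniform norm at most $1$, so perturbing the coefficients of $p$ changes the basic formula by a correspondingly small amount in $\norm{\cdot}_u$; hence $\Q + i\Q$-coefficient polynomials suffice. For a \emph{quantifier}, the maps $a \mapsto \sup_y a$ and $a \mapsto \inf_y a$ are $1$-Lipschitz for the uniform norm, so if $\psi$ is approximated by some $\psi' \in \mathcal{F}_X^0$, then $\sup_y \psi$ is approximated by $\sup_y \psi' \in \mathcal{F}_X^0$ to within the same error.

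The main obstacle, and the only genuinely nontrivial step, is the \emph{connective} case $\varphi = f(\varphi_1, \dots, \varphi_k)$ with $f \colon \R^k \to \R$ an arbitrary continuous function, since a priori there are uncountably many connectives. The key observation is that each $\varphi_i$ has finite uniform norm $R_i := \norm{\varphi_i}_u$, so for every structure $\mathcal{M}$ and every choice of unit-ball elements the value tuple $(\varphi_1^{\mathcal{M}}, \dots, \varphi_k^{\mathcal{M}})$ lies in the fixed compact box $K = \prod_{i=1}^k [-R_i, R_i]$. By the Stone--Weierstrass theorem I can choose a rational-coefficient polynomial $g$ with $\sup_K |f - g|$ as small as desired, making $\norm{f(\varphi_1,\dots,\varphi_k) - g(\varphi_1,\dots,\varphi_k)}_u$ small. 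Fixing such a $g$ and a Lipschitz constant $L$ for it on a slightly enlarged box, the induction hypothesis lets me pick $\psi_i \in \mathcal{F}_X^0$ with $\norm{\psi_i - \varphi_i}_u$ small enough that $\norm{g(\varphi_1,\dots,\varphi_k) - g(\psi_1,\dots,\psi_k)}_u \le L \sum_i \norm{\varphi_i - \psi_i}_u$ is small and the value tuples of the $\psi_i$ stay in the enlarged box; then $g(\psi_1,\dots,\psi_k) \in \mathcal{F}_X^0$ approximates $\varphi$. This closes the induction and shows that $\mathcal{P}_X$ is separable.

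Finally, separability of the real $\mathrm{C}^*$-algebra $\mathcal{P}_X$ makes the unit ball of its dual weak-$*$ metrizable, and the character space (a weak-$*$ closed subset of this unit ball, namely the Gelfand spectrum) is then compact and metrizable. Specializing to $X = \varnothing$ and invoking Proposition \ref{prop: theories as characters}, which identifies the characters of $\mathcal{P}_{\varnothing}$ precisely with the theories $\operatorname{Th}(\mathcal{M})$, gives that the space of theories of tracial von Neumann algebras is compact and metrizable.
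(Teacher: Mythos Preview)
Your proposal is correct and follows essentially the same approach as the paper: define a countable subclass of formulas using $\Q[i]$-coefficient $*$-polynomials and rational polynomial connectives, then prove density by induction on complexity, handling the connective case via Stone--Weierstrass on the compact box determined by the uniform bounds of the subformulas and the quantifier case via the $1$-Lipschitz property. Your treatment of the connective step is in fact slightly more explicit than the paper's (you spell out the Lipschitz estimate for the approximating polynomial $g$ on an enlarged box), but the argument is the same.
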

	
	\begin{proof}
		Consider \emph{rational polynomial formulas} defined in a similar way to formulas, except that
		\begin{itemize}
			\item The basic formulas are $\re \tr(p(x_1,\dots,x_n))$ and $\im \tr(p(x_1,\dots,x_n))$ where $p$ is a non-commutative $*$-polynomial with coefficients in $\Q[i]$.
			\item The connectives are polynomial functions $\R^k \to \R$ with rational coefficients.
		\end{itemize}
		The set of rational polynomial formulas is countable if the underlying set of variables is countable.  The rational polynomials in variables $X \sqcup I$ are dense in the space of all formulas with respect to $\norm{\cdot}_u$.  To show this, one proceeds by induction on the complexity of the formula.  For the base case, it suffices to approximate the coefficients of each non-commutative $*$-polynomial by elements of $\Q[i]$ and estimate the uniform norm of the difference.  For applying a connective $f$ to formulas $\phi_1$, \dots, $\phi_k$, recall by the Stone-Weierstrass theorem that $f$ can be uniformly approximated by a polynomial on $[-\norm{\varphi_1}_u,\norm{\varphi_1}_u] \times \dots \times [-\norm{\varphi_k}_u, \norm{\varphi_k}_u]$.  The case of applying a quantifier is immediate since
		\[
		\norm{\sup_y \varphi(x_1,\dots,x_n,y) - \sup_y \psi(x_1,\dots,x_n,y)}_u \leq \norm{\phi(x_1,\dots,x_n) - \psi(x_1,\dots,x_n)}_u.
		\]
		Hence, rational polynomial formulas are dense in the space of formulas.  The same holds when we restrict to formulas where the free variables are in $X$ and the bound variables are in $I$.  This shows separability of $\mathcal{P}_X$ since this is the separation-completion of the space of such formulas.  Since $\mathcal{P}_X$ is separable, the weak-$*$ topology on the unit ball in the dual space is metrizable, and hence also the space of states and the space of characters are metrizable.
	\end{proof}
	
	\subsection{Operator-valued interpretations of formulas} \label{subsec: operator valued interpretation}
	
	In order to understand the theory of a direct integral $M = \int_\Omega M_\omega\,d\mu(\omega)$, we must study the fiberwise evaluation
	\[
	\varphi^{M_\omega}(x_1(\omega),\dots,x_n(\omega))
	\]
	for a formula $\varphi$ on $x_1$, \dots, $x_n \in M$, which (we will see) is a measurable function on $\Omega$.  More generally, since we are interested in non-separable von Neumann algebras such as ultraproducts, we want to replace $L^\infty(\Omega)$ by a von Neumann subalgebra $N \subseteq Z(M)$, and still make sense of ``fiberwise evaluation'' of a formula with respect to $N$.  This leads to the definition of $N$-valued interpretations of formulas; we show below in Lemma \ref{direct-integral-case} that this agrees with the fiberwise evaluation in the case that $M$ is a direct integral.
	
	For (2) below, we recall that multivariable continuous function calculus is well-defined on any \emph{commutative} $\mathrm{C}^*$-algebra.  For (3), we recall that every bounded family of self-adjoint elements in a \emph{commutative} von Neumann algebra has a supremum.
	
	\begin{defn}[$N$-valued interpretation of formulas]
		Let $M$ be a finite von Neumann algebra, $N \subseteq Z(M)$ be a subalgebra of its center, and $E: M \to N$ be a normal, faithful, tracial conditional expectation.  For formulas $\varphi$ in the language of tracial von Neumann algebras in variables from a set $X$, we define the \textit{$N$-valued interpretation} $\varphi_E^M(x_1,\dots,x_n)$ as follows, by induction on the complexity of $\varphi$.  Here we write $\overline{x} = (x_1,\dots,x_n)$.
		\begin{enumerate}
			\item If $\varphi = \re\tau(p(\overline{x}))$ for some *-polynomial $p$, then $\varphi^M_E(\overline{x}) = \re(E(p(\overline{x})))$. Similarly, if $\varphi = \im\tau(p(\overline{x}))$ for some *-polynomial $p$, then $\varphi^M_E(\overline{x}) = \im(E(p(\overline{x})))$.
			\item If $\varphi = f(\varphi_1, \cdots, \varphi_k)$ where $f: \mathbb{R}^l \to \mathbb{R}$ is a connective, i.e., a continuous function, then $\varphi^M_E(\overline{x}) = f((\varphi_1)^M_E(\overline{x}), \cdots, (\varphi_n)^M_E(\overline{x}))$, where $f$ is applied in the sense of continuous functional calculus.
			\item If $\varphi = \sup_y \phi(\overline{x}, y)$, then $\varphi^M_E(\overline{x}) = \sup_{y \in (M)_1} \psi^M_E(\overline{x}, y)$. Similarly, if $\varphi = \inf_y \phi(\overline{x}, y)$, then $\varphi^M_E(\overline{x}) = \inf_{y \in (M)_1} \psi^M_E(\overline{x}, y)$.
		\end{enumerate}
	\end{defn}
	
	We remark that in the case that $N = \mathbb{C}$ and $E = \tau$ is a faithful, normal, tracial state, then $\varphi^M_E$ reduces to the usual interpretation $\varphi^M$ (Definition \ref{def: interpretation}).  As in that case, one can show by induction that the $N$-valued interpretation of every formula is bounded on the unit ball $(M)_1^n$, which is used in (3) above to show that the supremum exists in $N$.
	
	\begin{lemma}\label{uniform-bound}
		Let $M$ be a finite von Neumann algebra, $N \subseteq Z(M)$ be a subalgebra of its center, $E: M \to N$ be a normal, faithful, tracial conditional expectation, and $\varphi$ be a formula with $n$ free variables. Then there exists $C > 0$ s.t. $\norm{\varphi^M_E(\overline{x})}_\infty \leq C$ for all $\overline{x} \in (M)_1^n$. Furthermore, $C$ can be chosen independent of $M$, $N$, and $E$.
	\end{lemma}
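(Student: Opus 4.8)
The plan is to induct on the complexity of the formula $\varphi$, following the three clauses in the definition of the $N$-valued interpretation, while tracking the constant $C$ carefully enough to see that at each stage it is assembled purely from the syntactic data of $\varphi$ (the polynomial coefficients and the connectives appearing in it) together with the constants produced for the subformulas, and hence never depends on the triple $(M,N,E)$.

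For the base case, suppose $\varphi = \re\tr(p(\overline{x}))$, the imaginary-part case being identical. Here $\varphi^M_E(\overline{x}) = \re(E(p(\overline{x})))$, and since a conditional expectation is contractive we get $\norm{\varphi^M_E(\overline{x})}_\infty \leq \norm{E(p(\overline{x}))}_\infty \leq \norm{p(\overline{x})}_\infty$ (taking real parts does not increase the norm). When $\overline{x} \in (M)_1^n$ each monomial of $p$ evaluates to an element of norm at most $1$, so $\norm{p(\overline{x})}_\infty$ is at most the sum of the absolute values of the coefficients of $p$. Thus $C$ may be taken to be this sum, which depends only on $p$.

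For the connective step, write $\varphi = f(\varphi_1,\dots,\varphi_k)$ with $f\colon \R^k \to \R$ continuous. By the inductive hypothesis there are constants $C_1,\dots,C_k$, independent of $(M,N,E)$, with $\norm{(\varphi_i)^M_E(\overline{x})}_\infty \leq C_i$ for all $\overline{x} \in (M)_1^n$. The elements $(\varphi_i)^M_E(\overline{x})$ are commuting self-adjoint elements of the commutative algebra $N$, so their joint spectrum is contained in the compact box $K = \prod_{i=1}^k [-C_i,C_i]$. Applying $f$ through the multivariable continuous functional calculus, the value of $\norm{\varphi^M_E(\overline{x})}_\infty$ equals the supremum of $|f|$ over this joint spectrum, hence $\norm{\varphi^M_E(\overline{x})}_\infty \leq \sup_{t \in K} |f(t)| =: C$, which is finite because $f$ is continuous on the compact set $K$ and depends only on $f$ and $C_1,\dots,C_k$.

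The step requiring the most care is the quantifier case $\varphi = \sup_y \psi(\overline{x},y)$, which is where the commutative von Neumann algebra structure enters essentially. The inductive hypothesis gives a constant $C'$, uniform in $(M,N,E)$, with $\norm{\psi^M_E(\overline{x},y)}_\infty \leq C'$ for all $\overline{x} \in (M)_1^n$ and $y \in (M)_1$. Since each $\psi^M_E(\overline{x},y)$ is self-adjoint, this norm bound is equivalent to the order bounds $-C' \cdot 1 \leq \psi^M_E(\overline{x},y) \leq C' \cdot 1$ in $N$. Because $N$ is a commutative von Neumann algebra, the bounded family $\{\psi^M_E(\overline{x},y) : y \in (M)_1\}$ of self-adjoints has a supremum in $N$, and these order bounds pass to it: for any fixed $y_0$ we have $-C' \cdot 1 \leq \psi^M_E(\overline{x},y_0) \leq \sup_y \psi^M_E(\overline{x},y) \leq C' \cdot 1$, so $\norm{\varphi^M_E(\overline{x})}_\infty \leq C'$. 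Hence the same constant $C = C'$ works, again independent of $(M,N,E)$, and the infimum case is symmetric. This closes the induction, and since at every node the constant is built only from the children's constants and the local syntactic data, the final bound is uniform over all $(M,N,E)$ as claimed.
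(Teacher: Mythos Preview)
Your proof is correct and follows essentially the same approach as the paper's: induction on formula complexity, handling basic formulas via termwise expansion and contractivity of $E$, connectives via the spectral mapping property for functional calculus on the compact box $\prod_i [-C_i,C_i]$, and quantifiers by noting that the order bounds $-C' \cdot 1 \leq \psi^M_E \leq C' \cdot 1$ pass to the supremum. Your write-up is somewhat more explicit than the paper's (which is quite terse), but there is no substantive difference in strategy.
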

	
	\begin{proof}
		The case where $\varphi(\overline{x}) = \re \tr(p(\overline{x}))$ or $\im \tr(p(\overline{x}))$ follows by expanding the polynomial $p$ termwise and using the triangle inequality and submultiplicativity of the operator norm.  Next, suppose that $\varphi = f(\varphi_1,\dots,\varphi_k)$ for some continuous $f: \R^k \to \R$.  Since each of the $\varphi_j$'s is bounded by some constant, the output $f(\varphi_1,\dots,\varphi_k)$ will also be bounded, by the spectral mapping property for continuous functional calculus.  Finally, suppose that $\varphi(\overline{x}) = \sup_y \psi(\overline{x},y)$.  Clearly, if $\norm{\psi_E^M(\overline{x},y)} \leq C$ for all $y$, then $\norm{\varphi_E^M(x)} \leq C$ as well.
	\end{proof}
	
	A key ingredient for our version of {\L}o{\'s}'s theorem, as well as for handling other examples and applications, is the fact that when $\varphi = \sup_y \phi(\overline{x}, y)$, there is a single $y$ that realizes the supremum $\sup_{y \in (M)_1} \psi^M_E(\overline{x}, y)$ within error $\epsilon$ uniformly, i.e., in operator norm.  In the case of a direct integral, this means that $y(\omega)$ gets within $\epsilon$ of the supremum on \emph{almost every} fiber simultaneously.  A similar statement is given in \cite[Lemma 3.13]{BY13} for randomizations.
	
	\begin{prop} \label{prop: maximizer}
		Let $M$ be a finite von Neumann algebra, $N \subseteq Z(M)$ be a subalgebra of its center, $E: M \to N$ be a normal, faithful, tracial conditional expectation, and $\varphi = \sup_y \psi(\overline{x}, y)$ where $\phi$ is a formula with $n + 1$ free variables. Then for any $\overline{x} = (x_1, \cdots, x_n)$, tuple of elements of $M$, and $\epsilon > 0$, there exists $y \in (M)_1$ such that 
		\[
		\psi^M_E(\overline{x}, y) \geq \varphi^M_E(\overline{x}) - \epsilon \text{ in } N_{\sa}.
		\]
	\end{prop}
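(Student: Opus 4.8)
The plan is to realize the supremum $s := \varphi^M_E(\overline{x}) = \bigvee_{y \in (M)_1} \psi^M_E(\overline{x},y)$, which exists in $N_{\sa}$ and is norm-bounded by Lemma~\ref{uniform-bound}, by gluing together many near-optimal witnesses over a partition of unity in $N$. One cannot simply take a monotone net of values $\psi^M_E(\overline{x},y)$ increasing to $s$, because the supremum of an increasing net in a von Neumann algebra converges only in the strong operator topology, whereas we need $s - \psi^M_E(\overline{x},y) \le \epsilon$ in \emph{operator norm}. The right idea is therefore to choose, on each piece of a suitable central partition, a $y$ that is nearly optimal there, and then assemble these into a single global $y$.

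The key tool, which I would isolate first, is a \emph{localization principle} for the $N$-valued interpretation: for every central projection $e \in N$, every formula $\chi$, and all tuples $\overline{a},\overline{b}$ with $e a_j = e b_j$ for each $j$, one has $e\,\chi^M_E(\overline{a}) = e\,\chi^M_E(\overline{b})$. This is proved by induction on the complexity of $\chi$. For a basic formula $\re\tr(p)$ or $\im\tr(p)$, the point is that $e$ is a central idempotent, so on each monomial of positive degree $e\,a_{j_1}\cdots a_{j_d} = (ea_{j_1})\cdots(ea_{j_d})$ depends only on $e\overline{a}$, the scalar term of $p$ contributing the same multiple of $e$ on both sides; combined with the $N$-bimodularity of $E$ this gives $e\,E(p(\overline{a})) = e\,E(p(\overline{b}))$. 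For a connective $\chi = f(\chi_1,\dots,\chi_k)$ the claim follows because $z \mapsto ez$ is a unital $*$-homomorphism $N \to eN$ that commutes with (multivariable) continuous functional calculus, so $e\,\chi^M_E(\overline{a})$ depends only on the values $e\,(\chi_i)^M_E(\overline{a})$, which by induction depend only on $e\overline{a}$. For a quantifier $\chi = \sup_y \chi'$ one uses that cutting by the central projection $e$ commutes with suprema in the commutative algebra $N$, and then applies the inductive hypothesis with $y$ adjoined to the tuple. The consequence I actually need is the \emph{gluing statement}: if $\{e_i\}_i$ is a family of orthogonal central projections in $N$ with $\sum_i e_i = 1$ and $y = \sum_i e_i y_i$ with $y_i \in (M)_1$ (a strong-$*$ convergent sum of norm $\le 1$, since the $e_i$ are central and orthogonal), then $e_i y = e_i y_i$ gives $e_i\,\psi^M_E(\overline{x},y) = e_i\,\psi^M_E(\overline{x},y_i)$, whence $\psi^M_E(\overline{x},y) = \sum_i e_i\,\psi^M_E(\overline{x},y_i)$ in the strong operator topology.

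With these in hand, I would finish by a maximality argument. Call a nonzero central projection $e \in N$ \emph{good} if there is a $y \in (M)_1$ with $e\,\psi^M_E(\overline{x},y) \ge e(s - \epsilon)$, and use Zorn's lemma to choose a maximal family $\{e_i\}$ of orthogonal good projections, with witnesses $y_i$. I claim $g := 1 - \sum_i e_i = 0$. Indeed, if $g \ne 0$, then for any $y \in (M)_1$ the spectral projection $f_y := g\,\mathbbm{1}_{[0,\epsilon]}\big(g(s - \psi^M_E(\overline{x},y))\big) \in N$ satisfies $f_y\,\psi^M_E(\overline{x},y) \ge f_y(s-\epsilon)$, so $f_y$ would be a good projection below $g$ unless $f_y = 0$; but if $f_y = 0$ for every $y$, then $g\,\psi^M_E(\overline{x},y) \le gs - \epsilon g$ for all $y$, and taking the supremum over $y \in (M)_1$ (which commutes with cutting by $g$) gives $gs \le gs - \epsilon g$, i.e.\ $\epsilon g \le 0$, contradicting $g \ne 0$. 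Hence some $f_y \ne 0$ is good and below $g$, contradicting maximality, so $\sum_i e_i = 1$. Setting $y := \sum_i e_i y_i$ and applying the gluing statement yields $\psi^M_E(\overline{x},y) = \sum_i e_i\,\psi^M_E(\overline{x},y_i) \ge \sum_i e_i(s - \epsilon) = s - \epsilon$, as desired.

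I expect the main obstacle to be the localization principle together with the passage from ``fiberwise'' near-maximizers to a single global one: because only strong (not norm) convergence is available for suprema, the uniform estimate must be produced all at once by gluing over a partition of unity, and it is precisely the compatibility of \emph{every} formula-building operation — polynomial, connective, and quantifier — with cutting by a central projection that makes this gluing legitimate. The Zorn step itself should be routine once the spectral-projection dichotomy above is set up correctly.
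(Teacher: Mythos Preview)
Your proof is correct and follows the same overall architecture as the paper's: first establish a localization/gluing principle (the paper's Lemma~\ref{formula-partition}), then build a single $y$ by pasting near-optimal witnesses over a projection-valued partition of $N$.

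The one genuine difference is \emph{how} the partition is produced. The paper isolates a general fact about commutative von Neumann algebras (Lemma~\ref{partition-of-unity}): for any bounded family $\{z_i\}\subseteq N_{\sa}$ and any $\epsilon>0$ there is a PVM $\{e_j\}$ and indices $i_j$ with $\|\sup_i z_i - \sum_j e_j z_{i_j}\|\le\epsilon$, proved via the Gelfand spectrum and the fact that the lattice supremum agrees with the pointwise supremum on an open dense subset of the Stonean space. You instead run a direct Zorn/spectral-projection argument tailored to the situation: declare $e$ ``good'' when some $y$ witnesses $e\psi^M_E(\overline{x},y)\ge e(s-\epsilon)$, take a maximal orthogonal family of good projections, and use the spectral projection $\mathbbm{1}_{[0,\epsilon]}\big(g(s-\psi^M_E(\overline{x},y))\big)$ to rule out a nonzero remainder. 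Your route avoids any appeal to the topology of $\Spec(N)$ and is entirely order-theoretic/spectral, which is a nice feature; the paper's route has the advantage of packaging the idea as a reusable lemma about arbitrary bounded families in $N_{\sa}$, independent of the formula $\psi$. Either way, once the PVM is in hand, the gluing lemma finishes the job identically.
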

	
	This property emerges from the ability to paste elements of the von Neumann algebra using partitions of unity on the commutative algebra.  Recall that a \textit{projection-valued measure (PVM)} over von Neumann algebra $N$ is a collection $\{e_i\}_{i \in I}$ of pairwise orthogonal projections in $N$ which sum up to 1.  The relationship between projection-valued measures and operator-valued interpretations of formulas is as follows.  This is parallel to the \emph{locality} property for randomizations \cite[Definition 3.12]{BY13}.
	
	\begin{lemma}\label{formula-partition}
		Let $M$ be a finite von Neumann algebra, $N \subseteq Z(M)$ be a subalgebra of its center, $E: M \to N$ be a normal, faithful, tracial conditional expectation, $\varphi$ be a formula with $n$ free variables $\overline{x} = (x_1, \cdots, x_n)$. Let $\{e_j\}_{j \in J}$ be a PVM over $N$ and $x_{i,j} \in (M)_1$ for $i = 1, \dots, n$ and $j \in J$. Then
		\[
		\sum_{j \in J} e_j\varphi^M_E(x_{1,j},\dots,x_{n,j}) = \varphi^M_E\left( \sum_{j \in J} e_j x_{1,j},\dots, \sum_{j \in J} e_j x_{n,j} \right).
		\]
	\end{lemma}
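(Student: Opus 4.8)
The plan is to induct on the complexity of the formula $\varphi$, establishing the identity for basic formulas and then checking that it is preserved under connectives and quantifiers. Throughout, write $\overline{x}_j = (x_{1,j},\dots,x_{n,j})$, set $b_i = \sum_{j \in J} e_j x_{i,j}$ and $\overline{b} = (b_1,\dots,b_n)$. The one observation underlying every case is that since the $\{e_j\}$ are pairwise orthogonal central projections summing to $1$ and $N \subseteq Z(M)$, any single $y \in (M)_1$ satisfies $y = \sum_j e_j y$, while conversely every $\sum_j e_j y_j$ with $y_j \in (M)_1$ again lies in $(M)_1$ (its norm is $\sup_j \norm{e_j y_j} \le 1$); hence $(M)_1 = \{\sum_j e_j y_j : y_j \in (M)_1\}$. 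All interpretations appearing are uniformly bounded by Lemma \ref{uniform-bound}, so the strong-operator-convergent sums $\sum_j e_j(\cdot)$ and the suprema in the commutative von Neumann algebra $N$ are well-defined.

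For a basic formula $\varphi = \re\tr(p(\overline{x}))$, the heart is the purely algebraic identity $p(b_1,\dots,b_n) = \sum_j e_j\, p(\overline{x}_j)$. This I would prove monomial by monomial: expanding $(\sum_j e_j x_{i_1,j})\cdots(\sum_j e_j x_{i_m,j})$ and using that the $e_j$ are central (so they pass through the $x$'s) and orthogonal (so $e_j e_{j'} = \delta_{j,j'} e_j$) collapses the multiple sum to $\sum_j e_j x_{i_1,j}\cdots x_{i_m,j}$, with constant terms handled by $c = c\sum_j e_j$. Applying $E$, using its $N$-bimodule property $E(e_j a) = e_j E(a)$ together with normality to pass $E$ through the sum, and then taking real parts (which commute with multiplication by the self-adjoint $e_j$) gives the claim; the imaginary case is identical. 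For a connective $\varphi = f(\varphi_1,\dots,\varphi_k)$, put $a_{i,j} = (\varphi_i)_E^M(\overline{x}_j)$, so that the induction hypothesis gives $(\varphi_i)_E^M(\overline{b}) = \sum_j e_j a_{i,j}$, and I must check that continuous functional calculus on $N$ localizes across the PVM,
\[
f\Bigl(\sum_j e_j a_{1,j},\dots,\sum_j e_j a_{k,j}\Bigr) = \sum_j e_j\, f(a_{1,j},\dots,a_{k,j}).
\]
I would first verify this for polynomial $f$, where it reduces to the same manipulation as in the base case, and then extend to general continuous $f$ by uniformly approximating it by polynomials on the compact region containing the joint spectra of the uniformly bounded tuples (Lemma \ref{uniform-bound}), passing the approximation to the limit.

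The quantifier case is the crux. Take $\varphi = \sup_y \psi(\overline{x},y)$, the $\inf$ case being symmetric. Since $(M)_1 = \{\sum_j e_j y_j\}$, the supremum defining $\varphi_E^M(\overline{b})$ may be reindexed over families $(y_j)_{j \in J}$ with $y_j \in (M)_1$, and the induction hypothesis applied to $\psi$ (pasting all variables, $y$ included) gives
\[
\psi_E^M\Bigl(b_1,\dots,b_n,\sum_j e_j y_j\Bigr) = \sum_j e_j\, \psi_E^M(\overline{x}_j, y_j).
\]
Thus $\varphi_E^M(\overline{b}) = \bigvee_{(y_j)} \sum_j e_j \psi_E^M(\overline{x}_j,y_j)$, a supremum in $N_{\sa}$. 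The final step is to distribute this supremum across the orthogonal projections: since $e_{j_0}\sum_j e_j \psi_E^M(\overline{x}_j,y_j) = e_{j_0}\psi_E^M(\overline{x}_{j_0},y_{j_0})$ depends only on $y_{j_0}$, and compression by the central projection $e_{j_0}$ commutes with suprema in the commutative algebra $N$, compressing by $e_{j_0}$ and then summing over $j_0$ yields
\[
\varphi_E^M(\overline{b}) = \sum_{j_0} e_{j_0}\bigvee_{y_{j_0}} \psi_E^M(\overline{x}_{j_0},y_{j_0}) = \sum_{j_0} e_{j_0}\,\varphi_E^M(\overline{x}_{j_0}).
\]

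I expect the main obstacle to be precisely this lattice-theoretic step: justifying that the supremum over independent choices $(y_j)$ decouples across the orthogonal central projections, i.e.\ that compression by $e_{j_0}$ commutes with the $N_{\sa}$-valued supremum and that the optimizations on distinct pieces of the PVM are independent. This requires care with the abstract supremum of a possibly uncountable bounded family in a commutative von Neumann algebra (realized as the strong-operator limit of its finite maxima), together with the uniform bound from Lemma \ref{uniform-bound} to keep everything in a fixed bounded region. The infinite-PVM bookkeeping — strong-operator convergence of the sums $\sum_j e_j(\cdot)$ — recurs in every case but is routine given the orthogonality of the $e_j$ and these uniform bounds. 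Notably, this argument obtains the quantifier case directly from the lattice structure, without appealing to a single near-maximizer, so it does not rely on Proposition \ref{prop: maximizer}.
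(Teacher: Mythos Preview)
Your proposal is correct and follows essentially the same inductive approach as the paper, hinging on the same key lattice identity in the quantifier case: that compression by a projection $e \in N$ commutes with suprema of bounded families in $N_{\sa}$, i.e.\ $e \sup_i z_i = \sup_i e z_i$, which is precisely the claim the paper isolates and proves. The only minor presentational differences are that the paper handles connectives by observing that $y \mapsto e_j y$ is a $*$-homomorphism $N \to e_j N$ respecting functional calculus (rather than via polynomial approximation), and organizes the quantifier step fiberwise---showing $e_j \varphi_E^M(\overline{x}_j) = e_j \varphi_E^M(\overline{b})$ for each $j$---rather than re-parameterizing $(M)_1$ by families $(y_j)$; neither difference is substantive.
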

	
	\begin{proof}
		We proceed by induction on complexity of the formula.  First, suppose $\varphi(\overline{x}) = \re \tr(p(\overline{x}))$, and hence $\varphi_E^M(x_{1,j},\dots,x_{n,j}) = \re E[p(x_{1,j},\dots,x_{n,j})]$.  Write $x_i = \sum_{j \in J} e_j x_{i,j}$.  Since $e_j$ is central,
		\begin{align*}
			e_j \re E[p(x_{1,j},\dots,x_{n,j})] &= e_j \re E[p(e_j x_{1,j},\dots,e_j x_{n,j})] \\
			&= e_j \re E[p(e_j x_1,\dots,e_j x_n)] \\
			&= e_j \re E[p(x_1,\dots,x_n)].
		\end{align*}
		Summing over $j$ completes the proof for this case, and the imaginary part is similar.
		
		Next, suppose that $\varphi = f(\varphi_1,\dots,\varphi_k)$ for some continuous $f: \R^k \to \R$, such that $\varphi_j$ satisfies the induction hypothesis.  Let $\overline{x}_j = (x_{1,j},\dots,x_{n,j})$.  Then we have
		\[
		e_j f((\varphi_1)_E^M(\overline{x}_j),\dots,(\varphi_k)_E^M(\overline{x})) = e_j f(e_j (\varphi_1)_E^M(\overline{x}), \dots, e_j (\varphi_k)_E^M(\overline{x}))
		\]
		because the mapping $N \to e_jN$, $y \mapsto e_j y$ is a $*$-homomorphism of commutative $\mathrm{C}^*$-algebras and therefore respects functional calculus.  From here we apply the induction hypothesis to $\varphi_i$ and then argue similarly as in the first case.
		
		Now suppose that $\varphi(\overline{x}) = \sup_y \psi(\overline{x},y)$ where $\psi$ satisfies the induction hypothesis.  First, note that if $(z_i)_{i \in I}$ is a family of self-adjoint operators in $N$ and $e$ is a projection in $N$, then
		\[
		e \sup_i z_i = \sup_i e z_i.
		\]
		Indeed, clearly $e \sup_i z_i$ is an upper bound for $ez_i$ for each $i$, and hence $e \sup_i z_i \geq \sup_i e z_i$.  On the other hand, let $C = \sup_i \norm{z_i}$.  Then clearly $(1 - e)C + \sup_i ez_i$ is an upper bound for each $z_i$, so that $\sup_i z_i \leq (1 - e)C + \sup_i ez_i$, which implies
		\[
		e \sup_i z_i \leq e \sup_i ez_i \leq \sup_i ez_i \leq e \sup_i z_i,
		\]
		which proves the claim.  Again let $x_i = \sum_j e_j x_{i,j}$.  Note that
		\begin{align*}
			e_j \varphi_E^M(x_{1,j},\dots,x_{n,j}) &= e_j \sup_{y \in (M)_1} \psi_E^M(x_{1,j},\dots,x_{n,j},y) \\
			&= \sup_{y \in (M)_1} e_j \psi_E^M(x_{1,j},\dots,x_{n,j},y) \\
			&= \sup_{y \in (M)_1} e_j \psi_E^M(e_j x_{1,j},\dots,e_jx_{n,j}, e_jy) \\
			&= \sup_{y \in (M)_1} e_j \psi_E^M(e_j x_1,\dots,e_j x_n, e_j y),
		\end{align*}
		and by the reverse manipulations as before, this equals $e_j \varphi_E^M(x_1,\dots,x_n)$.  Hence,
		\[
		\sum_{j \in J} e_j \varphi_E^M(x_{1,j},\dots,x_{n,j}) = \sum_{j \in J} e_j \varphi_E^M(x_1,\dots,x_n) = \varphi_E^M(x_1,\dots,x_n)
		\]
		as desired.  The case of an infimum is symmetrical; in fact, one can use the identity
		\[
		\inf_{y \in (M)_1} \varphi_E^M(\overline{x},y) = -\sup_{y \in (M)_1} [-\varphi_E^M(\overline{x},y)]
		\]
		to reduce it to the case of suprema and connectives.
	\end{proof}
	
	The other ingredient that we need for Proposition \ref{prop: maximizer} is a classical fact about suprema in commutative von Neumann algebras, or equivalently, about Stone spaces (which have already made their appearance in model theory of probability spaces \cite[\S 2]{BY13}).  Lemma \ref{partition-of-unity} serves a similar purpose for us as the measure-theoretic fact \cite[Fact 3.30]{BY13} used in \cite[Lemma 3.31]{BY13}.
	
	\begin{lemma}\label{partition-of-unity}
		Let $N$ be a commutative von Neumann algebra, and let $\{z_i\}_{i \in I} \subset N_{sa}$ be a collection of elements which are uniformly bounded in norm. Then for any $\epsilon > 0$, there exists a PVM $\{e_j\}_{j \in J}$ over $N$, and for each $j$ there exists $i_j \in I$, s.t. $\norm{\sup_{i \in I} z_i - \sum_j e_jz_{i_j}} \leq \epsilon$.
	\end{lemma}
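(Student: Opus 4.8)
The plan is to localize the order-theoretic supremum into a norm approximation by cutting $N$ into pieces on which a single $z_i$ nearly realizes the supremum. Working through Gelfand duality, $N \cong C(\Omega)$ for a Stonean (compact, extremally disconnected) space $\Omega$, or equivalently using that $N_{\sa}$ is Dedekind complete, the element $z := \sup_{i \in I} z_i$ exists in $N_{\sa}$. Since each $z_i \le z$, the difference $z - z_i$ is positive, and I would introduce its spectral projection $q_i := \mathbbm{1}_{[0,\epsilon)}(z - z_i) \in N$. By the spectral theorem $\norm{q_i(z - z_i)} \le \epsilon$, so on the range of $q_i$ the element $z_i$ agrees with $z$ up to $\epsilon$ in operator norm.

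The key step is to show $\bigvee_{i} q_i = 1$. Suppose not, and set $p := 1 - \bigvee_i q_i \ne 0$. Then $p q_i = 0$, so $p \le 1 - q_i = \mathbbm{1}_{[\epsilon,\infty)}(z - z_i)$, whence $p(z - z_i) \ge \epsilon p$ for every $i$, i.e.\ $p z_i \le pz - \epsilon p$. This means $z - \epsilon p$ is still an upper bound for $\{z_i\}$ (on $p$ because $p z_i \le pz - \epsilon p$, and on $1 - p$ trivially since $z \ge z_i$), yet $z - \epsilon p \lneq z$, contradicting that $z$ is the \emph{least} upper bound. This is precisely where the least-upper-bound property, as opposed to a mere strong-operator supremum, is used, and it is the heart of the argument.

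With $\bigvee_i q_i = 1$ in hand, I would refine $\{q_i\}$ to a partition of unity by a routine Zorn's lemma argument: consider families of pairwise orthogonal nonzero projections $e_j$, each dominated by some $q_{i_j}$, and take a maximal such family. If its sum $e = \sum_j e_j$ were not $1$, then some $q_i$ must meet $1 - e$ nontrivially (otherwise $q_i \le e$ for all $i$, forcing $\bigvee_i q_i \le e \lneq 1$), so that the nonzero projection $q_i \wedge (1 - e)$ could be adjoined, contradicting maximality. This yields a PVM $\{e_j\}_{j \in J}$ with $e_j \le q_{i_j}$.

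Finally, since $\sum_j e_j = 1$ we have $z - \sum_j e_j z_{i_j} = \sum_j e_j(z - z_{i_j})$, and each summand is a positive central element with $0 \le e_j(z - z_{i_j}) \le \epsilon e_j$ (using $e_j \le q_{i_j}$). Orthogonality of the $e_j$ then gives $0 \le \sum_j e_j(z - z_{i_j}) \le \epsilon \sum_j e_j = \epsilon$, so $\norm{\sup_i z_i - \sum_j e_j z_{i_j}} \le \epsilon$, as desired. The main obstacle is conceptual rather than computational: the finite maxima $\bigvee_{i \in F} z_i$ converge to $z$ only strongly, never uniformly, so it is exactly the spectral-projection and least-upper-bound mechanism of the second paragraph that upgrades the order supremum into the required norm estimate.
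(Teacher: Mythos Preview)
Your proof is correct, and it takes a genuinely different route from the paper's. The paper works on the Gelfand spectrum $\Omega$ of $N$: it invokes the fact (from Takesaki) that the order supremum $z$ agrees with the pointwise supremum on an open dense set $O \subseteq \Omega$, then uses Zorn's lemma to choose a maximal family of pairwise disjoint \emph{clopen} sets $K_j$ on each of which some single $z_{i_j}$ is uniformly $\epsilon$-close to $z$, shows that $\bigcup_j K_j$ is dense by a topological argument, and finishes by continuity. Your argument instead stays entirely within the order and spectral structure of $N$: you use the spectral projections $q_i = \mathbbm{1}_{[0,\epsilon)}(z-z_i)$ and the least-upper-bound property directly to get $\bigvee_i q_i = 1$, then disjointify. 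Your approach is a bit more self-contained (no appeal to the structure of the hyperstonean spectrum or the dense-open-set characterization of suprema), while the paper's approach makes the geometric picture more explicit. The two arguments are really the same idea viewed through different lenses: your contradiction ``$z - \epsilon p$ is still an upper bound'' is the algebraic translation of the paper's ``find a point in $O$ where some $z_i$ is within $\epsilon$ of $z$,'' and both maximality arguments serve the same disjointification purpose.
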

	
	\begin{proof}
		Let $\Omega$ be the Gelfand spectrum of $N$, so that $N \cong C(\Omega)$.  We shall regard all $z_i$ as well as $z = \sup_{i \in I} z_i$ as continuous functions on $\Omega$. Then by \cite[Corollary III.1.16]{Tak79}, there exists an open dense set $O \subseteq \Omega$ s.t. for any $t \in O$, $z(t) = \sup_{i \in I} z_i(t)$. We now consider a maximal collection $\{K_j\}_{j \in J}$ of nonempty pairwise disjoint clopen subsets s.t. for each $j$ there exists $i_j \in I$ with $|z(t) - z_{i_j}(t)| < \epsilon$ for all $t \in K_j$. We claim that $\cup K_j$ is dense. Indeed, assume otherwise, then $U = \Omega \setminus \overline{\cup K_j}$ is a nonempty open set, whence so is $O \cap U$. Fix any $t_0 \in O \cap U$, as $z(t_0) = \sup_{i \in I} z_i(t_0)$, there exists some $z_i$ s.t. $|z(t_0) - z_i(t_0)| < \epsilon$. Continuity then implies $|z(t) - z_i(t)| < \epsilon$ in some neighborhood $K$ of $t_0$. As clopen sets form a basis of topology for $\Omega$ (see \cite[Definition III.1.6]{Tak79}), such a neighborhood can be chosen to be clopen and a subset of $U$. But then adding $K$ to $\{K_j\}$ shows the latter collection is not maximal, a contradiction. This shows that $\cup K_j$ is indeed dense. Let $e_j = 1_{K_j}$.  Then $|\sum_j e_j z_{i_j}(t) - z(t)| \leq \epsilon$ on a dense subset of $\Omega$, and hence by continuity $\norm{\sum_j e_j z_{i_j}(t) - z(t)} \leq \epsilon$.
	\end{proof}

	\begin{proof}[Proof of Proposition \ref{prop: maximizer}]
		As $\varphi^M_E(\overline{x}) = \sup_{y \in (M)_1} \psi^M_E(\overline{x}, y)$, by Lemma \ref{partition-of-unity}, there exists a PVM $\{e_j\}_{j \in J}$ over $N$ and elements $y_j \in (M)_1$ such that
		\[
		\sum_j e_j\psi^M_E(\overline{x}, y_j) \geq \varphi^M_E(\overline{x}) - \epsilon.
		\]
		Let $y = \sum_j e_j y_j$. By Lemma \ref{formula-partition}, to the decompositions $x_i = \sum_j e_j x_i$ and $y = \sum_j e_j y_j$, we have
		\[
		\psi^M_E(\overline{x}, y) = \sum_j e_j \psi_E^M(x_1,\dots,x_n, y_j) \geq \varphi_E^M(\overline{x}) - \epsilon,
		\]
		which concludes the proof.
	\end{proof}
	
	We next note that for a tracial von Neumann algebra $M$, the $N$-valued interpretation of a formula $\varphi(x_1,\dots,x_n)$ always defines a uniformly continuous function on $(M)_1^n$ with respect to $2$-norm, which will be useful for various limiting arguments.  This is a generalization of the well-known uniform continuity property for scalar-valued interpretations of formulas \cite[Theorem 3.5]{BYBHU08}, and its proof proceeds in the same way by induction on the complexity of the formula, while also relying on Proposition \ref{prop: maximizer}.
	
	\begin{lemma}\label{uniform-continuity}
		Let $M$ be a finite von Neumann algebra, $N \subseteq Z(M)$ be a subalgebra of its center, $E: M \to N$ be a normal, faithful, tracial conditional expectation.  Let $\tau$ be a state on $N$, so that $\tilde{\tau} = \tau \circ E$ is a tracial state on $M$.  For each formula $\varphi(x_1,\dots,x_n)$, there exists a continuous increasing function $\delta: [0,\infty) \to [0,\infty)$ such that for $\overline{x}$, $\overline{y} \in (M)_1^n$, we have
		\[
		\max_j \norm{x_j - y_j}_2 \leq \delta(\epsilon) \implies \norm{\varphi_E^M(x_1,\dots,x_n) - \varphi_E^M(y_1,\dots,y_n)}_2 \leq \epsilon,
		\]
		where $\norm{\cdot}_2$ denotes the $2$-norm associated to $\tilde{\tau}$.  Furthermore, $\delta$ can be chosen independent of $M$, $N$, $E$, and $\tau$.
	\end{lemma}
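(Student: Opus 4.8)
The plan is to induct on the complexity of $\varphi$, mirroring the proof of the scalar uniform continuity result \cite[Theorem 3.5]{BYBHU08}, but carrying out all estimates with $N$-valued outputs measured in $\norm{\cdot}_2 = \norm{\cdot}_{2,\tilde\tau}$. First I would record two preliminary facts that make the $N$-valued bookkeeping behave like the scalar case: (i) every interpretation $\varphi_E^M(\overline{x})$ is self-adjoint and lies in $N$, on which $\norm{\cdot}_2$ restricts to the $2$-norm of $\tau$ (since $E$ fixes $N$ and $\tilde\tau|_N = \tau$); and (ii) $E$ is the orthogonal projection of $L^2(M,\tilde\tau)$ onto $L^2(N,\tau)$, so $\norm{E(w)}_2 \le \norm{w}_2$ for $w \in M$. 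For the base case $\varphi = \re \tr(p)$ (and the imaginary analogue), I would estimate $\norm{\re E(p(\overline{x})) - \re E(p(\overline{y}))}_2 \le \norm{E(p(\overline{x}) - p(\overline{y}))}_2 \le \norm{p(\overline{x}) - p(\overline{y})}_2$, then finish with the standard telescoping bound $\norm{p(\overline{x}) - p(\overline{y})}_2 \le L_p \max_j \norm{x_j - y_j}_2$ on $(M)_1^n$, whose constant $L_p$ depends only on $p$ (using $\norm{ab}_2 \le \norm{a}_\infty \norm{b}_2$ and traciality of $\tilde\tau$). This is manifestly independent of $M$, $N$, $E$, and $\tau$.

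For the connective step $\varphi = f(\varphi_1,\dots,\varphi_k)$, the outputs $(\varphi_i)_E^M(\overline{x})$ are self-adjoint elements of $N$ whose operator norms are bounded by constants $C_i$ independent of $M$, $N$, $E$ (Lemma \ref{uniform-bound}), so $f$ only matters on the box $\prod_i [-C_i, C_i]$. Given $\epsilon$, I would approximate $f$ uniformly within $\epsilon$ on this box by a polynomial $g$ via Stone--Weierstrass; since $\norm{\cdot}_2 \le \norm{\cdot}_\infty$ on $N$, replacing $f$ by $g$ costs at most $\epsilon$ in $2$-norm at each of the two points. As the elements of $N$ commute, $g$ applied by functional calculus coincides with the algebraic polynomial, so the telescoping argument of the base case bounds $\norm{g((\varphi_i)_E^M(\overline{x}))_i - g((\varphi_i)_E^M(\overline{y}))_i}_2$ by a constant multiple of $\max_i \norm{(\varphi_i)_E^M(\overline{x}) - (\varphi_i)_E^M(\overline{y})}_2$, which the induction hypotheses for the $\varphi_i$ make small. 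Combining the three contributions gives the modulus for $\varphi$, with constants independent of $\tau$.

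The quantifier step $\varphi = \sup_y \psi(\overline{x},y)$ is the crux, and here the naive scalar inequality $|\sup_y F - \sup_y G| \le \sup_y |F - G|$ is unavailable because the $N$-valued supremum can be ``attained on different fibers'' and passing to $\norm{\cdot}_2$ mixes the fibers. This is exactly where Proposition \ref{prop: maximizer} is essential: it supplies a single $y_1 \in (M)_1$ with $\psi_E^M(\overline{x},y_1) \ge \varphi_E^M(\overline{x}) - \epsilon$ as an inequality in $N_{\sa}$ (equivalently, pointwise on the spectrum of $N$), so that feeding the same $y_1$ into $\psi_E^M(\overline{y},\cdot) \le \varphi_E^M(\overline{y})$ yields $\varphi_E^M(\overline{x}) - \varphi_E^M(\overline{y}) \le \psi_E^M(\overline{x},y_1) - \psi_E^M(\overline{y},y_1) + \epsilon$ in $N_{\sa}$, and a symmetric choice $y_2$ handles the reverse inequality. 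Viewing $N \cong C(\Omega)$ and working pointwise, these two operator inequalities give $|\varphi_E^M(\overline{x}) - \varphi_E^M(\overline{y})| \le |g| + |h| + \epsilon \cdot 1$, where $g = \psi_E^M(\overline{x},y_1) - \psi_E^M(\overline{y},y_1)$ and $h = \psi_E^M(\overline{y},y_2) - \psi_E^M(\overline{x},y_2)$. Taking $\norm{\cdot}_2$ and applying the induction hypothesis for $\psi$ with the last coordinate held fixed (so only the distances $\norm{x_j - y_j}_2$ enter) bounds $\norm{g}_2, \norm{h}_2 \le \epsilon$, hence $\norm{\varphi_E^M(\overline{x}) - \varphi_E^M(\overline{y})}_2 \le 3\epsilon$; rescaling $\delta$ by a factor of $3$ produces the modulus for $\varphi$. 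The infimum case follows from $\inf_y = -\sup_y(-\,\cdot\,)$.

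The main obstacle is precisely this quantifier step: unlike the scalar theory, one cannot bound the difference of two $N$-valued suprema by a supremum of $2$-norm differences, so the uniform (operator-norm, i.e.\ pointwise) maximizer provided by Proposition \ref{prop: maximizer}---built on the partition-of-unity pasting in Lemmas \ref{formula-partition} and \ref{partition-of-unity}---is what makes the induction go through. Everything else is a routine adaptation of the scalar argument, with the one extra observation that commutativity of $N$ lets functional calculus be handled by polynomial approximation in $2$-norm.
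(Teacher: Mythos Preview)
Your proposal is correct and follows essentially the same approach as the paper: induction on complexity, with the base case handled via $\norm{E(w)}_2 \le \norm{w}_2$ and telescoping, the connective case via Stone--Weierstrass polynomial approximation together with Lemma~\ref{uniform-bound}, and the quantifier case via the uniform approximate maximizer from Proposition~\ref{prop: maximizer}. The only cosmetic difference is in the quantifier bookkeeping: the paper bounds the positive and negative parts $[\varphi_E^M(\overline{x}) - \varphi_E^M(\overline{y})]_\pm$ separately (using a single approximate maximizer and subadditivity of $[\cdot]_+$), whereas you pass to the $C(\Omega)$ picture, pick two maximizers $y_1, y_2$, and bound the absolute value pointwise by $|g| + |h| + \epsilon$; both routes arrive at the same $3\epsilon$ estimate.
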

	
	\begin{proof}
		We proceed by induction on the complexity of formulas.  First, consider a basic formula $\re \tr(p(x_1,\dots,x_n))$.  By decomposing $p$ into monomials, it suffices to show that $E[x_{i_1} \dots x_{i_k}]$ is uniformly continuous on $(M)_1^n$.  This follows easily from the inequalities $\norm{xy}_2 \leq \min(\norm{x} \norm{y}_2, \norm{x}_2 \norm{y})$ and $\norm{E(x)}_2 \leq \norm{x}_2$.
		
		Next, suppose that $\varphi = f(\varphi_1,\dots,\varphi_k)$ for some $f: \R^k \to \R$ continuous, and formulas $\varphi_1$, \dots, $\varphi_k$ satisfying the conclusion of the lemma.  By Lemma \ref{uniform-bound}, each formula $\varphi_j$ is bounded by some constant $C_j$.  To obtain the conclusion for $\varphi$, it suffices to show that the application of $f$ by functional calculus defines a $\norm{\cdot}_2$-uniformly continuous function
		\[
		((N)_{C_1} \cap N_{\sa}) \times \dots \times ((N)_{C_k} \cap N_{\sa}) \to N_{\sa}.
		\]
		Note that in the case where $f$ is a polynomial, this follows from the same reasoning as in the first step.  Now fix a sequence of polynomials $f_j$ such that $f_j \to f$ uniformly on $[-C_1,C_1] \times \dots \times [-C_k,C_k]$.  The spectral mapping theorem implies uniform convergence of $f_j \to f$ on $N$ (and with bounds independent of the particular choice of $N$).  Moreover, since uniform continuity is preserved under uniform limits, we see that $f$ is $\norm{\cdot}_2$-uniformly continuous.
		
		Next, consider the case where $\varphi(\overline{x}) = \sup_y \psi(\overline{x},y)$.  Let $\delta: [0,\infty) \to [0,\infty)$ be the modulus of continuity associated to $\psi$, and suppose that $\max_j \norm{x_j - x_j'}_2 \leq \delta(\epsilon / 4)$.  By Proposition \ref{prop: maximizer}, there exists $y$ such that
		\[
		\psi_E^M(\overline{x},y) \geq \varphi_E^M(\overline{x}) - \epsilon/4.
		\]
		Furthermore, note that
		\[
		\varphi_E^M(\overline{x}') \geq \psi_E^M(\overline{x}',y).
		\]
		Letting $[\cdot]_+$ denote the positive part of a self-adjoint operator, we have
		\[
		[\varphi_E^M(\overline{x}) - \varphi_E^M(\overline{x}')]_+ \leq [\varphi_E^M(\overline{x}) - \psi_E^M(\overline{x},y)]_+ + [\psi_E^M(\overline{x},y) - \psi_E^M(\overline{x}',y)]_+ + [\psi_E^M(\overline{x}',y) - \varphi_E^M(\overline{x}',y)]_+.
		\]
		On the right-hand side, the first term is bounded by $\epsilon/4$ by our choice of $y$, and the last term is zero by definition of $\varphi$.  Hence,
		\[
		\norm{ [\varphi_E^M(\overline{x}) - \varphi_E^M(\overline{x}')]_+ }_2 \leq \frac{\epsilon}{4} + \norm{\psi_E^M(\overline{x},y) - \psi_E^M(\overline{x}',y)}_2 \leq \frac{\epsilon}{2},
		\]
		where we have applied the uniform continuity condition on $\psi$.  A symmetrical argument shows that
		\[
		\norm{ [\varphi_E^M(\overline{x}') - \varphi_E^M(\overline{x})]_+ }_2 \leq \frac{\epsilon}{2}.
		\]
		Overall, 
		\[
		\norm{ \varphi_E^M(\overline{x}) - \varphi_E^M(\overline{x}') }_2 = \norm{ [\varphi_E^M(\overline{x}) - \varphi_E^M(\overline{x}')]_+ }_2 + \norm{ [\varphi_E^M(\overline{x}') - \varphi_E^M(\overline{x})]_+ }_2 \leq \epsilon,
		\]
		provided that $\max_j \norm{x_j - x_j'}_2 \leq \delta(\epsilon/4)$.  As before, the case of an infimum is symmetrical to the case of a supremum.
	\end{proof}

	Now we show that if $M$ has a direct integral decomposition, then the operator-valued interpretations of formulas coincide with their fiberwise interpretations.
	
	\begin{lemma}\label{direct-integral-case}
		Suppose that $(M,\tau)$ is a tracial von Neumann algebra given as a direct integral $(M,\tau) = \int_\Omega^{\oplus} (M_\omega,\tau_\omega) \,d\mu(\omega)$.  Let $N = L^\infty(\Omega)$ and let $E: M \to L^\infty(\Omega)$ be the faithful, normal, tracial conditional expectation given by $E[x](\omega) = \tau_\omega(x(\omega))$. Let $\varphi$ be a formula in the language of tracial von Neumann algebras, and let $\overline{x} \in (M)_1^n$.  Then $\varphi^M_E(\overline{x}) \in L^\infty(\Omega)$ is (up to a.e.\ equivalence) the function given by $\Omega \ni \omega \mapsto \varphi^{M_\omega}(\overline{x}(\omega))$.  In particular, $\omega \mapsto \varphi^{M_\omega}(\overline{x}(\omega))$ is measurable.
	\end{lemma}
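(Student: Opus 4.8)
The plan is to induct on the complexity of $\varphi$, at each stage comparing the $N$-valued interpretation $\varphi^M_E(\overline{x})$, which lives in $N = L^\infty(\Omega)$ and is therefore measurable, with the a priori merely set-theoretic function $g_\varphi(\omega) := \varphi^{M_\omega}(\overline{x}(\omega))$. Throughout I will use that the algebraic operations of $M$ and the conditional expectation $E$ act fiberwise, so that for a $*$-polynomial $p$ one has $E(p(\overline{x}))(\omega) = \tau_\omega(p(\overline{x}(\omega)))$ by the definition of the direct integral and of $E$.

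For a basic formula $\varphi = \re\tr(p(\overline{x}))$ the claim is immediate: $\varphi^M_E(\overline{x}) = \re E(p(\overline{x}))$ equals $\omega \mapsto \re\tau_\omega(p(\overline{x}(\omega))) = \varphi^{M_\omega}(\overline{x}(\omega))$, which is measurable by Fact \ref{fact: sections}; the imaginary part is identical. For a connective $\varphi = f(\varphi_1,\dots,\varphi_k)$, the continuous functional calculus of $f$ on the commutative algebra $L^\infty(\Omega)$ is computed pointwise, so applying $f$ to the finitely many inductive identities $(\varphi_i)^M_E(\overline{x})(\omega) = \varphi_i^{M_\omega}(\overline{x}(\omega))$, which hold simultaneously off a single null set, yields the claim for $\varphi$.

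The substantial case is the quantifier $\varphi = \sup_y \psi(\overline{x},y)$, where I must reconcile the supremum $\varphi^M_E(\overline{x}) = \sup_{y \in (M)_1} \psi^M_E(\overline{x},y)$ taken in the lattice $N_{\sa}$ over \emph{sections} $y$ with the fiberwise supremum $g_\varphi(\omega) = \sup_{z \in (M_\omega)_1} \psi^{M_\omega}(\overline{x}(\omega),z)$ over \emph{elements} of the fiber. Write $h := \varphi^M_E(\overline{x}) \in L^\infty(\Omega)$. For the inequality $g_\varphi \geq h$ a.e., I invoke Proposition \ref{prop: maximizer}: for each $\epsilon > 0$ there is a section $y_\epsilon \in (M)_1$ with $\psi^M_E(\overline{x},y_\epsilon) \geq h - \epsilon$ in $N_{\sa}$; applying the induction hypothesis to $\psi$ gives $\psi^{M_\omega}(\overline{x}(\omega),y_\epsilon(\omega)) \geq h(\omega) - \epsilon$ a.e., and the left-hand side is $\leq g_\varphi(\omega)$ by definition, so letting $\epsilon \downarrow 0$ along a sequence yields $g_\varphi \geq h$ a.e. For the reverse inequality I use the fiberwise density from Definition \ref{def: measurable field} together with the uniform continuity of Lemma \ref{uniform-continuity}: since $(e_n(\omega))_n$ is $\norm{\cdot}_{2,\tau_\omega}$-dense in $(M_\omega)_1$ and $z \mapsto \psi^{M_\omega}(\overline{x}(\omega),z)$ is $\norm{\cdot}_2$-continuous (the scalar case $N = \C$, $E = \tau_\omega$), the fiberwise supremum reduces to the countable supremum $g_\varphi(\omega) = \sup_n \psi^{M_\omega}(\overline{x}(\omega),e_n(\omega))$. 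Each $e_n$ is a section in $(M)_1$, so by induction $\psi^{M_\omega}(\overline{x}(\omega),e_n(\omega)) = \psi^M_E(\overline{x},e_n)(\omega) \leq h(\omega)$ a.e.; combining these countably many a.e. inequalities gives $g_\varphi \leq h$ a.e. Hence $g_\varphi = h$ a.e., which in particular shows $g_\varphi$ is measurable.

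I expect this quantifier step to be the main obstacle, as it is the only place where the global and fiberwise notions of ``supremum over the unit ball'' could diverge. The crux is that Proposition \ref{prop: maximizer} supplies a single section achieving the supremum uniformly across fibers, while the explicit dense sequence $(e_n)$ built into the measurable field lets one collapse the uncountable fiberwise supremum to a countable one, to which the inductive identity applies on a co-null set. The infimum case is symmetric, e.g.\ via $\inf_y = -\sup_y(-\,\cdot\,)$.
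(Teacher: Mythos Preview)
Your proof is correct and follows essentially the same inductive scheme as the paper: identical treatment of basic formulas and connectives, and for the quantifier step the same use of the dense sequence $(e_n)$ together with uniform continuity to reduce the fiberwise supremum to a countable one and obtain $g_\varphi \leq h$ a.e.

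The one tactical difference is in the inequality $g_\varphi \geq h$. You invoke Proposition~\ref{prop: maximizer} to produce a single near-maximizing section $y_\epsilon$; the paper instead argues directly that for \emph{every} section $y \in (M)_1$ the induction hypothesis gives $\psi^M_E(\overline{x},y)(\omega) = \psi^{M_\omega}(\overline{x}(\omega),y(\omega)) \leq g_\varphi(\omega)$ a.e., so $g_\varphi$ (once known to be measurable via the $e_n$ argument) is an upper bound in $N_{\sa}$ and hence dominates the lattice supremum $h$. The paper's route is marginally lighter, since Proposition~\ref{prop: maximizer} is not needed here, but your use of it is perfectly valid and has the mild advantage that it does not require first establishing measurability of $g_\varphi$.
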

	
	\begin{proof}
		We proceed by induction on the complexity of formulas. The cases for basic formulas and adding connectives are straightforward and left to the reader.  Now consider $\varphi = \sup_y \psi(\overline{x}, y)$ where the claim is already proved for $\psi$.
		
		Let $z(\omega) = \varphi^{M_\omega}(\overline{x}(\omega))$.  First, let us verify that $z$ is measurable (as in \cite[Lemma 1.2]{FG24}).  Let $(e_n)_{n > 0}$ be as in Definition \ref{def: measurable field}, so that $(e_n(\omega))_{n > 0}$ is $\norm{\cdot}_2$-dense in $(M_\omega)_1$ for every $\omega$.  Note that $\varphi^{M_\omega}$ is uniformly continuous on $(M_\omega)_1$ with respect to $\norm{\cdot}_2$ by Lemma \ref{uniform-continuity} in the case $N = \C$.  Therefore, for almost every $\omega$,
		\begin{equation*}
			\varphi^{M_\omega}(\overline{x}(\omega)) = \sup_{y_0 \in (M_\omega)_1} \psi^{M_\omega}(\overline{x}(\omega), y_0) = \sup_{n > 0} \psi^{M_\omega}(\overline{x}(\omega), e_n(\omega)).
		\end{equation*}
		Thus, $z(\omega) = \varphi^{M_\omega}(\overline{x}(\omega))$ is the pointwise supremum of a countable collection of measurable functions, hence it is measurable.
		
		We show that this agrees with the operator-valued supremum by proving two directions.  Given $y \in (M)_1$, writing $y$ as a function $\omega \mapsto y(\omega) \in (M_\omega)_1$ defined almost everywhere, we have by induction hypothesis
		\begin{equation*}
			\begin{split}
				\psi^M_E(\overline{x}, y)(\omega) &= \psi^{M_\omega}(\overline{x}(\omega), y(\omega))\\
				&\leq \sup_{y_0 \in (M_\omega)_1} \psi^{M_\omega}(\overline{x}(\omega), y_0)\\
				&= \varphi^{M_\omega}(\overline{x}(\omega)).
			\end{split}
		\end{equation*}
		Hence, $z$ is an upper bound for $\psi_E^M(\overline{x},y)$ in $N_{\sa}$, and
		\[
		\varphi_E^M(\overline{x}) = \sup_{y \in (M)_1} \psi_E^M(\overline{x},y) \leq z.
		\]
		On the other hand, since the operator supremum in $L^\infty(\Omega)$ agrees with the pointwise supremum almost everywhere,
		\begin{equation*}
			\begin{split}
				z(\omega) =
				\varphi^{M_\omega}(\overline{x}(\omega)) &= \sup_{n > 0} \psi^{M_\omega}(\overline{x}(\omega), e_n(\omega))\\
				&= \left[ \sup_{n > 0} \psi^M_E(\overline{x}, e_n) \right](\omega)\\
				&\leq \left[\sup_{y \in (M)_1} \psi^M_E(\overline{x}, y) \right](\omega)\\
				&= \left[\varphi^M_E(\overline{x})\right](\omega).
			\end{split}
		\end{equation*}
		Thus, $z \leq \varphi_E^M(\overline{x})$.  Thus we have shown $z = \varphi_E^M(\overline{x})$ as desired.
	\end{proof}
	
	\subsection{{\L}o{\'s}'s theorem for ultrafibers and generalized ultraproducts}
	
	The following theorem is an analog of {\L}o{\'s}'s theorem for ultrafibers.
	
	\begin{thm}[{{\L}o{\'s}'s theorem for ultrafibers; cf. \cite[Theorem 3.19]{BY13}}] \label{thm: continuous Los}
		Let $M$ be a finite von Neumann algebra, $N \subseteq Z(M)$ be a subalgebra of its center, $E: M \to N$ be a normal, faithful, tracial conditional expectation.  Let $\mathcal{U}$ be a character on $N$ and let $\pi_{E,\cU}: M \to M^{/E,\cU}$ be the quotient map onto the ultrafiber.  Then for every formula $\varphi$ and every tuple $\overline{x} \in (M)_1^n$, we have
		\[
		\varphi^{M^{/E, \mathcal{U}}}(\pi_{E,\cU}(\overline{x})) = \mathcal{U}(\varphi^M_E(\overline{x})).
		\]
	\end{thm}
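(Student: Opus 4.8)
The plan is to induct on the complexity of $\varphi$, mirroring the classical proof of {\L}o{\'s}'s theorem but with the ultrafilter limit replaced by evaluation of the character $\mathcal{U}$ on the $N$-valued interpretation; boundedness of these interpretations (Lemma \ref{uniform-bound}) is what makes the relevant suprema in $N$ exist. First I would dispose of the two easy cases. For a basic formula $\varphi = \re\tr(p(\overline{x}))$, the left-hand side unwinds as $\re\tau_{E,\mathcal{U}}(\pi_{E,\mathcal{U}}(p(\overline{x}))) = \re\mathcal{U}(E(p(\overline{x})))$, since $\pi_{E,\mathcal{U}}$ is a $*$-homomorphism and $\tau_{E,\mathcal{U}}(\pi_{E,\mathcal{U}}(\cdot)) = \mathcal{U}(E(\cdot))$; as $\mathcal{U}$ is a state it commutes with taking real parts, so this equals $\mathcal{U}(\re E(p(\overline{x}))) = \mathcal{U}(\varphi^M_E(\overline{x}))$, and the imaginary part is identical. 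For a connective $\varphi = f(\varphi_1,\dots,\varphi_k)$, the inductive hypothesis gives $\varphi_i^{M^{/E,\mathcal{U}}}(\pi_{E,\mathcal{U}}(\overline{x})) = \mathcal{U}(\varphi_{i,E}^M(\overline{x}))$, so the only point to check is that $\mathcal{U}(f(a_1,\dots,a_k)) = f(\mathcal{U}(a_1),\dots,\mathcal{U}(a_k))$ for the commuting self-adjoint elements $a_i = \varphi_{i,E}^M(\overline{x}) \in N_{\sa}$. This is immediate because a character on the commutative algebra $N$ is evaluation at a point of its Gelfand spectrum, under which multivariable continuous functional calculus is computed pointwise.

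The substantive case is the quantifier $\varphi = \sup_y \psi(\overline{x},y)$, which I would settle by proving the two inequalities separately. For $\varphi^{M^{/E,\mathcal{U}}}(\pi_{E,\mathcal{U}}(\overline{x})) \le \mathcal{U}(\varphi^M_E(\overline{x}))$, I take an arbitrary $\eta$ in the unit ball of the ultrafiber and invoke Proposition \ref{prop: ultrafiber-well-defined}---more precisely, the fact established in its proof that $\pi_{E,\mathcal{U}}((M)_1) = (M^{/E,\mathcal{U}})_1$---to write $\eta = \pi_{E,\mathcal{U}}(y)$ for some $y \in (M)_1$. The inductive hypothesis identifies $\psi^{M^{/E,\mathcal{U}}}(\pi_{E,\mathcal{U}}(\overline{x}),\eta)$ with $\mathcal{U}(\psi^M_E(\overline{x},y))$; since $\psi^M_E(\overline{x},y) \le \sup_{y'\in(M)_1}\psi^M_E(\overline{x},y') = \varphi^M_E(\overline{x})$ in $N_{\sa}$ and $\mathcal{U}$ is a state, hence order preserving, the latter is at most $\mathcal{U}(\varphi^M_E(\overline{x}))$, and taking the supremum over $\eta$ yields the inequality.

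For the reverse inequality, the crucial input is Proposition \ref{prop: maximizer}: given $\epsilon > 0$ it produces a \emph{single} $y \in (M)_1$ with $\psi^M_E(\overline{x},y) \ge \varphi^M_E(\overline{x}) - \epsilon$ in $N_{\sa}$, i.e.\ a uniform (operator-norm) near-maximizer rather than merely a fiberwise one. Applying the state $\mathcal{U}$ and then the inductive hypothesis gives
\[
\psi^{M^{/E,\mathcal{U}}}(\pi_{E,\mathcal{U}}(\overline{x}),\pi_{E,\mathcal{U}}(y)) = \mathcal{U}(\psi^M_E(\overline{x},y)) \ge \mathcal{U}(\varphi^M_E(\overline{x})) - \epsilon,
\]
and since $\pi_{E,\mathcal{U}}(y)$ lies in the unit ball of the ultrafiber, the left-hand side is bounded above by $\varphi^{M^{/E,\mathcal{U}}}(\pi_{E,\mathcal{U}}(\overline{x}))$; letting $\epsilon \to 0$ closes the case. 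The $\inf$ case reduces to this one via $\inf_y = -\sup_y(-\,\cdot\,)$. I expect the genuine difficulty of the whole argument to be concentrated in this reverse inequality, and it is precisely the uniform maximizer of Proposition \ref{prop: maximizer}---itself resting on the Stone-space pasting of Lemmas \ref{formula-partition} and \ref{partition-of-unity}---that makes it succeed: a merely pointwise (fiberwise) choice of near-maximizer would not survive evaluation at the single character $\mathcal{U}$.
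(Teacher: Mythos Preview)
Your proof is correct and follows essentially the same argument as the paper: induction on formula complexity, with the atomic and connective cases handled via the fact that $\mathcal{U}$ is a character, and the quantifier case split into two inequalities using Proposition \ref{prop: ultrafiber-well-defined} (norm-one lifting) for one direction and Proposition \ref{prop: maximizer} (uniform near-maximizer) for the other. Your identification of Proposition \ref{prop: maximizer} as the crux of the argument matches the paper's emphasis exactly.
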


	\begin{proof}
		We proceed by induction on the complexity of $\varphi$. For atomic formulas, this holds because for a non-commutative $*$-polynomial $p$,
		\[
		\tau_{E,\cU}(p(\pi_{E,\cU}(x)) = \tau_{E,\cU}(\pi_{E,\cU}(p(x)) = \cU \circ E[p(x)].
		\]
		The case of adding connectives can also be proved easily from the fact that $\mathcal{U}$ is a $*$-homomorphism from $N$ to $\mathbb{C}$ and therefore commutes with continuous functional calculus.
		
		Next, suppose that $\varphi(\overline{x}) = \sup_y \psi(\overline{x},y)$; the case of an infimum, of course, is symmetrical.  We first show that $\varphi^{M^{/E, \mathcal{U}}}(\pi_{E,\cU}(\overline{x})) \leq \mathcal{U}({\varphi^M_E}(\overline{x}))$.  As noted in Proposition \ref{prop: ultrafiber-well-defined}, any element $\hat{y}$ of norm at most $1$ in $M^{/E, \mathcal{U}}$ can be lifted to an element $y$ of norm at most $1$ in $M$.  From the induction hypothesis
		\[
		\psi^{M^{/E, \mathcal{U}}}(\pi_{E,\cU}(\overline{x}),\hat{y}) = \cU(\psi_E^M(\overline{x},y)) \leq \cU(\varphi_E^M(\overline{x})),
		\]
		and since $\hat{y}$ was arbitrary, $\varphi^{M^{/E,\cU}}(\pi_{E,\cU}(\overline{x})) \leq \cU(\varphi_E^M(\overline{x}))$.  For the other direction, fix $\epsilon > 0$.  Then by Proposition \ref{prop: maximizer}, there exists $y \in (M)_1$ s.t. $\psi^M_E(\overline{x}, y) \geq \varphi^M_E(\overline{x}) - \epsilon$. Then by induction hypothesis,
		\begin{equation*}
			\begin{split}
				\mathcal{U}({\varphi^M_E}(\overline{x})) &\leq \mathcal{U}({\psi^M_E}(\overline{x}, y)) + \epsilon\\
				&= \psi^{M^{/E, \mathcal{U}}}(\pi_{E,\cU}(\overline{x}), \pi_{E,\cU}(y)) + \epsilon\\
				&\leq \varphi^{M^{/E, \mathcal{U}}}(\pi_{E,\cU}(\overline{x})) + \epsilon.
			\end{split}
		\end{equation*}
		Since $\epsilon$ was arbitrary, $\mathcal{U}({\varphi^M_E}(\overline{x})) \leq \varphi^{M^{/E, \mathcal{U}}}(\pi_{E,\cU}(\overline{x}))$, so the proof is complete.
	\end{proof}
	
	\begin{rmk} \label{rem: boundedness}
		We showed earlier that for each formula $\varphi$, there is a universal upper bound for $\varphi_E^M$.  Now in light of Theorem \ref{thm: continuous Los}, we know that for every pure state $\cU$ on $N$, we have
		\[
		|\cU(\varphi_E^M(\overline{x}))| = |\varphi^{M^{/E,\cU}}(\pi_{E,\cU}(\overline{x}))| \leq \norm{\varphi}_u.
		\]
		Therefore,
		\[
		\norm{\varphi_E^M} \leq \norm{\varphi}_u,
		\]
		and it follows that the evaluation map $\varphi \mapsto \varphi_E^M$ extends to the $\mathrm{C}^*$-algebra $\mathcal{P}_X$ of definable predicates from Definition \ref{def: C* algebra of predicates}.
	\end{rmk}
	
	Furthermore, in the special case of ultra\emph{powers} over measure spaces, we have the following, which in particular proves Proposition \ref{introprop: ultraproduct EE}.
	
	\begin{prop} \label{prop: Los ultrapower}
		Let $A$ be a commutative von Neumann algebra and let $(M,\tau)$ be a tracial von Neumann algebra.  Let $\tilde{M} = A \overline{\otimes} M$ and let $\tilde{E} = \id \otimes \tau: \tilde{M} \to A$.  Let $\cU$ be a pure state on $A$, so that $\tilde{M}^{/E,\cU} = M^{\cU}$.  Then for $\overline{x} \in (M)_1^n$ and formulas $\varphi$, we have
		\[
		\varphi^{M^{\cU}}(\pi_{\tilde{E},\cU}(1_A \otimes \overline{x})) = \varphi^M(\overline{x}).
		\]
		In particular, for every sentence $\varphi$, we have $\varphi^{M^{\cU}} = \varphi^M$, so that $M^{\cU} \equiv M$.
	\end{prop}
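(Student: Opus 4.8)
The plan is to reduce the statement to an operator-valued identity via Theorem \ref{thm: continuous Los} and then prove that identity by induction on the complexity of $\varphi$, with the quantifier step being the crux. Applying Theorem \ref{thm: continuous Los} to $\tilde{M} = A \overline{\otimes} M$ with $N = A$ and $E = \tilde{E}$ (note $1_A \otimes x_i \in (\tilde{M})_1$), we obtain
\[
\varphi^{M^{\cU}}(\pi_{\tilde{E},\cU}(1_A \otimes \overline{x})) = \cU\bigl(\varphi^{\tilde{M}}_{\tilde{E}}(1_A \otimes \overline{x})\bigr).
\]
Since $\cU$ is unital and positive, it therefore suffices to prove the stronger claim that the $A$-valued interpretation on diagonal elements is the constant $\varphi^M(\overline{x})\,1_A$, i.e.
\[
\varphi^{\tilde{M}}_{\tilde{E}}(1_A \otimes x_1, \dots, 1_A \otimes x_n) = \varphi^M(x_1,\dots,x_n)\,1_A \quad \text{in } A_{\sa}.
\]
The final assertion $M^{\cU} \equiv M$ then follows by taking $n = 0$, where the identity reads $\varphi^{M^{\cU}} = \varphi^M$ for every sentence.

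I would prove this identity by induction on the complexity of $\varphi$. The base case $\varphi = \re\tr(p)$ is immediate, since $x \mapsto 1_A \otimes x$ is a $*$-homomorphism, so $p(1_A \otimes \overline{x}) = 1_A \otimes p(\overline{x})$ and $\tilde{E}(1_A \otimes p(\overline{x})) = (\id \otimes \tau)(1_A \otimes p(\overline{x})) = \tau(p(\overline{x}))\,1_A$; the imaginary part is the same. The connective case reduces to scalar functional calculus, because the inputs are scalar multiples of $1_A$, on which continuous functional calculus in $A$ is just evaluation of the real number.

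The quantifier case $\varphi = \sup_y \psi$ is the heart of the argument. The inequality ``$\geq$'' is easy: restricting the supremum to diagonal elements $y = 1_A \otimes y_0$ and applying the inductive hypothesis to $\psi$ gives $\psi^{\tilde{M}}_{\tilde{E}}(1_A \otimes \overline{x}, 1_A \otimes y_0) = \psi^M(\overline{x}, y_0)\,1_A$, and taking the supremum over $y_0 \in (M)_1$ yields $\varphi^{\tilde{M}}_{\tilde{E}}(1_A \otimes \overline{x}) \geq \varphi^M(\overline{x})\,1_A$. The real content is the reverse inequality $\psi^{\tilde{M}}_{\tilde{E}}(1_A \otimes \overline{x}, y) \leq \varphi^M(\overline{x})\,1_A$ for \emph{every} $y \in (\tilde{M})_1$. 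For $y$ of the ``simple'' form $\sum_j e_j(1_A \otimes y_j)$, where $\{e_j\}$ is a finite PVM in $A$ and $y_j \in (M)_1$, Lemma \ref{formula-partition} together with the inductive hypothesis gives $\psi^{\tilde{M}}_{\tilde{E}}(1_A \otimes \overline{x}, y) = \sum_j e_j \psi^M(\overline{x}, y_j) \leq \varphi^M(\overline{x})\,1_A$ exactly, using $\psi^M(\overline{x}, y_j) \leq \varphi^M(\overline{x})$ and $\sum_j e_j = 1_A$. To pass to arbitrary $y$, I would note that the simple elements constitute the unit ball of the $*$-algebra generated by $\{e \otimes w : e \in A \text{ a projection},\, w \in M\}$, which is $\sigma$-weakly dense in $\tilde{M}$, so by the Kaplansky density theorem they are $\norm{\cdot}_2$-dense in $(\tilde{M})_1$ for any faithful normal tracial state. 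Choosing such a state and invoking the $\norm{\cdot}_2$-uniform continuity of $y \mapsto \psi^{\tilde{M}}_{\tilde{E}}(1_A \otimes \overline{x}, y)$ from Lemma \ref{uniform-continuity}, I obtain simple $y_k \to y$ with $\psi^{\tilde{M}}_{\tilde{E}}(1_A \otimes \overline{x}, y_k) \to \psi^{\tilde{M}}_{\tilde{E}}(1_A \otimes \overline{x}, y)$ in $\norm{\cdot}_2$, hence $\sigma$-weakly in $A$ (the family is uniformly bounded by Lemma \ref{uniform-bound}). Since each approximant satisfies the bound and the order interval $\{z \in A_{\sa} : z \leq c\,1_A\}$ is $\sigma$-weakly closed, the limit satisfies $\psi^{\tilde{M}}_{\tilde{E}}(1_A \otimes \overline{x}, y) \leq \varphi^M(\overline{x})\,1_A$ as well. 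When $A$ is not $\sigma$-finite, I would first test this operator inequality against normal states of $A$, whose supports are $\sigma$-finite, and reduce to the previous case.

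I expect the quantifier upper bound to be the main obstacle. The delicate point is that $\norm{\cdot}_2$-density of the simple elements does \emph{not} by itself allow one to pass an operator inequality to the limit, since a small $\norm{\cdot}_2$-perturbation may be large in operator norm on a small spectral set; the argument succeeds only because the condition ``$z \leq c\,1_A$'' is $\sigma$-weakly closed and $\norm{\cdot}_2$-convergence of a bounded net forces $\sigma$-weak convergence. Correspondingly, I would take care to verify both that the simple elements are genuinely dense in the right sense and that the reduction to the $\sigma$-finite case is valid.
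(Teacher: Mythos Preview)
Your proposal is correct and follows essentially the same route as the paper: reduce via Theorem~\ref{thm: continuous Los} to the operator-valued identity $\varphi^{\tilde M}_{\tilde E}(1_A\otimes\overline{x})=\varphi^M(\overline{x})\,1_A$, then induct on complexity, handling the quantifier upper bound by checking it on simple elements $\sum_j e_j\otimes y_j$ via Lemma~\ref{formula-partition} and passing to general $y$ by Kaplansky density together with Lemma~\ref{uniform-continuity} applied against each normal state on $A$. The only cosmetic difference is that the paper packages the induction as the more general Proposition~\ref{prop: discretization} (allowing an arbitrary PVM and $N\subseteq Z(M)$), and accordingly invokes Proposition~\ref{prop: maximizer} for the lower bound; in your special case $N=\mathbb C$ that step is unnecessary and your direct restriction to diagonal $y=1_A\otimes y_0$ suffices.
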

	
	The first claim says that the mapping $M \to M^{\cU}$ given by $x \mapsto \pi_{E,\cU}(1 \otimes x)$ is an elementary embedding (see \ref{def: EE}).  The second claim about sentences is the special case where $\varphi$ has no free variables (or $n = 0$).  Note that by Theorem \ref{thm: continuous Los} the first claim can be rewritten as
	\[
	\cU \circ \varphi_{\tilde{E}}^{\tilde{M}}(1_A \otimes \overline{x}) = \varphi^M(\overline{x}).
	\]
	Hence, since we want to prove the proposition for \emph{all} pure states, the claim is equivalent to
	\[
	\varphi_{\tilde{E}}^{\tilde{M}}(1_A \otimes \overline{x}) = 1_A \varphi^M(\overline{x}) \text{ in } A.
	\]
	We will prove this in greater generality, allowing $\tau: M \to \C$ to be replaced by a conditional expectation onto a central subalgebra.
	
	\begin{prop}\label{prop: tensor formula}
		Let $M$ be a finite von Neumann algebra, $N \subseteq Z(M)$ be a subalgebra of its center, $E: M \to N$ be a normal, faithful, tracial conditional expectation, and $A$ be an abelian von Neumann algebra. Let $\tilde{M} = A \mathbin{\overline{\otimes}} M$ and $\tilde{E}: \tilde{M} \to A \mathbin{\overline{\otimes}} N$ be given by $\tilde{E} = \mathrm{Id} \otimes E$. Then for any sentence $\varphi$, $\varphi^{\tilde{M}}_{\tilde{E}} = 1_A \otimes \varphi^M_E$.
	\end{prop}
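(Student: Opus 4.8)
The plan is to prove the following stronger statement by induction on the complexity of $\varphi$: for every formula $\varphi(x_1,\dots,x_n)$ and every $\overline{x} \in (M)_1^n$,
\[
\varphi^{\tilde{M}}_{\tilde{E}}(1_A \otimes \overline{x}) = 1_A \otimes \varphi^M_E(\overline{x}),
\]
where $1_A \otimes \overline{x} = (1_A \otimes x_1,\dots,1_A \otimes x_n)$, and the proposition is the case $n = 0$. First I would record that $\tilde{N} := A \overline{\otimes} N \subseteq A \overline{\otimes} Z(M) = Z(\tilde{M})$, so that $\tilde{E} = \mathrm{Id}\otimes E$ is a normal, faithful, tracial conditional expectation onto $\tilde{N}$ and $\varphi^{\tilde{M}}_{\tilde{E}}$ genuinely takes values in $\tilde{N}$. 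The structural fact used throughout is that $\iota\colon N \to \tilde{N}$, $b \mapsto 1_A \otimes b$, is a normal unital $*$-embedding; hence it respects continuous functional calculus and preserves suprema of bounded families in $N_{\sa}$. The base case $\varphi = \re\tr(p(\overline{x}))$ (and the imaginary part) is immediate since $p(1_A \otimes \overline{x}) = 1_A \otimes p(\overline{x})$ and $\tilde{E}(1_A \otimes p(\overline{x})) = 1_A \otimes E(p(\overline{x}))$; the connective case $\varphi = f(\varphi_1,\dots,\varphi_k)$ follows from the inductive hypothesis and the fact that $\iota$ respects functional calculus.

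The quantifier case $\varphi = \sup_y \psi(\overline{x}, y)$ is the heart of the argument, and the inequality ``$\leq$'' is the main obstacle. The reverse inequality ``$\geq$'' is easy: restricting the defining supremum to the elements $1_A \otimes m$ with $m \in (M)_1$ and using the inductive hypothesis gives $\varphi^{\tilde{M}}_{\tilde{E}}(1_A \otimes \overline{x}) \geq \sup_m\, 1_A \otimes \psi^M_E(\overline{x}, m) = 1_A \otimes \varphi^M_E(\overline{x})$, where the last step is normality of $\iota$. For ``$\leq$'' I must bound $\psi^{\tilde{M}}_{\tilde{E}}(1_A \otimes \overline{x}, y)$ by $1_A \otimes \varphi^M_E(\overline{x})$ for an \emph{arbitrary} $y \in (\tilde{M})_1$, which need not be a simple tensor. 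Since an inequality between self-adjoint elements of the commutative algebra $\tilde{N}$ can be checked by applying every character, I would fix a character $\cV$ on $\tilde{N}$ and put $\tilde{\tau} = \cV \circ \tilde{E}$. The key observation is that for $w \in \tilde{N}$ one has $\norm{w}_{2,\tilde{\tau}} = |\cV(w)|$, because $\tilde{E}$ fixes $\tilde{N}$ and $\cV$ is multiplicative; consequently the uniform continuity of $\psi^{\tilde{M}}_{\tilde{E}}$ in $\norm{\cdot}_{2,\tilde{\tau}}$ provided by Lemma \ref{uniform-continuity} lets me replace $y$ by a nearby approximant while changing $\cV(\psi^{\tilde{M}}_{\tilde{E}}(1_A\otimes\overline{x}, \cdot))$ by at most $\epsilon$.

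It then remains to use approximants of a convenient shape. Using $\sigma$-weak density of the algebraic tensor product $A \odot M$ in $\tilde{M}$, approximating the $A$-coefficients in $\norm{\cdot}_\infty$ by functions constant on the pieces of a clopen partition of the spectrum of $A$, and truncating to return to the unit ball, I can produce $y' = \sum_{j} e_j(1_A \otimes m_j) \in (\tilde{M})_1$ with $\{e_j\}$ a PVM in $A \subseteq \tilde{N}$ and $m_j \in (M)_1$, such that $\norm{y - y'}_{2,\tilde{\tau}}$ is as small as desired. For such $y'$, Lemma \ref{formula-partition} (applied with the arguments $1_A \otimes \overline{x}$ held fixed and the last argument decomposed along $\{e_j\}$) together with the inductive hypothesis gives
\[
\psi^{\tilde{M}}_{\tilde{E}}(1_A \otimes \overline{x}, y') = \sum_j e_j\, \psi^{\tilde{M}}_{\tilde{E}}(1_A \otimes \overline{x}, 1_A \otimes m_j) = \sum_j e_j\,(1_A \otimes \psi^M_E(\overline{x}, m_j)).
\]
Since $1_A \otimes \psi^M_E(\overline{x}, m_j) \leq 1_A \otimes \varphi^M_E(\overline{x})$ for each $j$, we get $\psi^{\tilde{M}}_{\tilde{E}}(1_A \otimes \overline{x}, y') \leq \sum_j e_j(1_A \otimes \varphi^M_E(\overline{x})) = 1_A \otimes \varphi^M_E(\overline{x})$; applying $\cV$ and the continuity estimate to pass from $y'$ back to $y$ yields $\cV(\psi^{\tilde{M}}_{\tilde{E}}(1_A \otimes \overline{x}, y)) \leq \cV(1_A \otimes \varphi^M_E(\overline{x})) + \epsilon$. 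Letting $\epsilon \to 0$ and ranging over all characters $\cV$ shows $\psi^{\tilde{M}}_{\tilde{E}}(1_A \otimes \overline{x}, y) \leq 1_A \otimes \varphi^M_E(\overline{x})$ in $\tilde{N}_{\sa}$, and taking the supremum over $y$ closes the quantifier case; the infimum case is symmetric (or reduce it to suprema as in Lemma \ref{formula-partition}). The genuinely nontrivial point is exactly this ``$\leq$'' direction for non-tensor $y$: the reduction to a fixed character is what makes both the $\norm{\cdot}_{2,\tilde{\tau}}$-approximation by PVM-simple elements and the application of Lemma \ref{formula-partition} legitimate.
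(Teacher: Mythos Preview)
Your overall strategy matches the paper's: both prove the stronger formula-level statement $\varphi^{\tilde{M}}_{\tilde{E}}(1_A\otimes\overline{x}) = 1_A\otimes\varphi^M_E(\overline{x})$ by induction, and both handle the quantifier case by reducing to PVM-simple $y$'s via Lemma \ref{formula-partition}. (The paper actually proves the more general Proposition \ref{prop: discretization}, of which this is the special case $J=\{*\}$, $e_*=1_A$.) Your ``$\geq$'' argument via normality of $\iota$ is fine and slightly slicker than the paper's use of Proposition \ref{prop: maximizer}.

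The gap is in your ``$\leq$'' direction, specifically the approximation step. You fix a character $\cV$ on $\tilde{N}$ and claim that $\sigma$-weak density of $A\odot M$ lets you approximate an arbitrary $y\in(\tilde{M})_1$ by a PVM-simple element in $\norm{\cdot}_{2,\tilde{\tau}}$ with $\tilde{\tau}=\cV\circ\tilde{E}$. But $\cV$ is never normal on a diffuse $\tilde{N}$, so $\tilde{\tau}$ is not a normal state, and $\sigma$-weak (or SOT-$*$) convergence of a bounded net does \emph{not} imply convergence in $\norm{\cdot}_{2,\tilde{\tau}}$. Your ``key observation'' $\norm{w}_{2,\tilde{\tau}}=|\cV(w)|$ is correct, but it is precisely the multiplicativity of $\cV$ that forces non-normality, so you cannot have both the identity you want and compatibility with Kaplansky density.

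The paper's fix is to test the inequality against \emph{normal} states $\tau$ on $\tilde{N}$ rather than characters. For a net $y_\ell\to y$ in SOT-$*$ with $y_\ell$ PVM-simple, one gets $\norm{y_\ell-y}_{2,\tau\circ\tilde{E}}\to 0$ for every normal $\tau$; Lemma \ref{uniform-continuity} then gives $\psi_{\tilde{E}}^{\tilde{M}}(\overline{x},y_\ell)\to\psi_{\tilde{E}}^{\tilde{M}}(\overline{x},y)$ in $\norm{\cdot}_{2,\tau}$ for every normal $\tau$, i.e.\ in SOT-$*$, and inequalities in $\tilde{N}_{\sa}$ pass to SOT-$*$ limits. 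Replacing your character $\cV$ by a normal state throughout repairs the argument with essentially no other changes.
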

	
	For the sake of induction, we will actually prove an even more general statement.  Proposition \ref{prop: tensor formula} corresponds to the case of Proposition \ref{prop: discretization} where the index set $J$ is a singleton and the projection-valued measure is simply $1_A$.
	
	\begin{prop} \label{prop: discretization}
		Let $M$ be a finite von Neumann algebra, $N \subseteq Z(M)$ be a subalgebra of its center, $E: M \to N$ be a normal, faithful, tracial conditional expectation, and $A$ be an abelian von Neumann algebra. Let $\tilde{M} = A \mathbin{\overline{\otimes}} M$ and $\tilde{E}: \tilde{M} \to A \overline{\otimes} N$ be given by $\tilde{E} = \id \otimes E$. Let $\varphi$ be a formula with $n$ free variables. Let $x_i = \sum_{j \in J} e_j \otimes m_{i,j}$ for each $i \leq n$, where $\{e_j\}_{j \in J}$ is a PVM over $A$ and $m_{i,j} \in (M)_1$ for all $j \in J$.  Also, write $\overline{m}_j = (m_{1,j},\dots,m_{n,j})$. Then
		\[
		\varphi^{\tilde{M}}_{\tilde{E}}(\overline{x}) = \sum_{j \in J} e_j \otimes \varphi^M_E(\overline{m}_j).
		\]
	\end{prop}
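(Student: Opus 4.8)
The plan is to induct on the complexity of $\varphi$, mirroring the structure of Lemma~\ref{formula-partition}. First note that since $N \subseteq Z(M)$ and $A$ are abelian, $A \overline{\otimes} N$ is a commutative subalgebra of $Z(\tilde M) = A \overline{\otimes} Z(M)$, and $\tilde E = \id \otimes E$ is a normal, faithful, tracial conditional expectation onto it, so $\varphi^{\tilde M}_{\tilde E}$ is well defined. For a basic formula $\varphi = \re \tr(p)$, the orthogonality relations $e_j e_{j'} = \delta_{jj'} e_j$ collapse every monomial, giving $p(\overline{x}) = \sum_j e_j \otimes p(\overline{m}_j)$ and hence, applying $\tilde E = \id \otimes E$ and taking real parts (each $e_j$ being a self-adjoint projection), $\varphi^{\tilde M}_{\tilde E}(\overline{x}) = \sum_j e_j \otimes \re E(p(\overline{m}_j)) = \sum_j e_j \otimes \varphi^M_E(\overline{m}_j)$; the imaginary case is identical. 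For a connective $\varphi = f(\varphi_1,\dots,\varphi_k)$, the induction hypothesis rewrites each $(\varphi_r)^{\tilde M}_{\tilde E}(\overline{x})$ as $\sum_j e_j \otimes (\varphi_r)^M_E(\overline{m}_j)$, and since multiplication by $e_j$ is a $*$-homomorphism of commutative $\mathrm{C}^*$-algebras respecting functional calculus, exactly as in the connective case of Lemma~\ref{formula-partition}, we obtain $\varphi^{\tilde M}_{\tilde E}(\overline{x}) = \sum_j e_j \otimes \varphi^M_E(\overline{m}_j)$.

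The substantive case is $\varphi = \sup_y \psi$ (the infimum being symmetric via $\inf = -\sup(-\,\cdot\,)$), where I must prove two inequalities in $(A \overline{\otimes} N)_{\sa}$ with $z := \sum_j e_j \otimes \varphi^M_E(\overline{m}_j)$. For $\varphi^{\tilde M}_{\tilde E}(\overline{x}) \geq z$, I would fix $\epsilon > 0$ and, for each $j$, invoke Proposition~\ref{prop: maximizer} in $M$ to produce $y_j \in (M)_1$ with $\psi^M_E(\overline{m}_j, y_j) \geq \varphi^M_E(\overline{m}_j) - \epsilon$ in $N_{\sa}$. Pasting these along the PVM gives $y = \sum_j e_j \otimes y_j \in (\tilde M)_1$, and the induction hypothesis applied to $\psi$ with the inputs $(m_{1,j},\dots,m_{n,j},y_j)$ decomposed over $\{e_j\}$ yields $\psi^{\tilde M}_{\tilde E}(\overline{x}, y) = \sum_j e_j \otimes \psi^M_E(\overline{m}_j, y_j) \geq z - \epsilon$. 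Letting $\epsilon \to 0$ gives $\varphi^{\tilde M}_{\tilde E}(\overline{x}) \geq z$.

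The reverse inequality $\varphi^{\tilde M}_{\tilde E}(\overline{x}) \leq z$ is the main obstacle, since a general $y \in (\tilde M)_1$ need not decompose over the projections $e_j$, and for non-separable $A$ and $M$ one cannot approximate $y$ by finite block tensors in the strong norm $\norm{\cdot}_{\tilde E,2,\infty}$. I would instead verify the inequality after testing against an arbitrary normal state $\omega$ on the commutative algebra $A \overline{\otimes} N$, which suffices because normal states determine the order. Setting $\rho = \omega \circ \tilde E$, a normal tracial state on $\tilde M$, one approximates a given $y \in (\tilde M)_1$ in $\norm{\cdot}_{2,\rho}$ by a simple tensor $y' = \sum_\ell f_\ell \otimes n_\ell$ with $n_\ell \in (M)_1$ and $\{f_\ell\}$ a PVM over $A$; this is a routine consequence of Kaplansky density together with approximation of the abelian coefficients by simple functions, and is the one genuinely technical point. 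Passing to the common refinement $g_{j\ell} = e_j f_\ell$, the induction hypothesis for $\psi$ gives $\psi^{\tilde M}_{\tilde E}(\overline{x}, y') = \sum_{j,\ell} g_{j\ell} \otimes \psi^M_E(\overline{m}_j, n_\ell) \leq z$, since $\psi^M_E(\overline{m}_j, n_\ell) \leq \varphi^M_E(\overline{m}_j)$. Finally, Lemma~\ref{uniform-continuity}, applied with $A \overline{\otimes} N$ in the role of the central subalgebra and $\omega$ as the state (its modulus being uniform), controls $\norm{\psi^{\tilde M}_{\tilde E}(\overline{x}, y) - \psi^{\tilde M}_{\tilde E}(\overline{x}, y')}_{2,\omega}$ once $\norm{y - y'}_{2,\rho}$ is small, so Cauchy--Schwarz yields $\omega(\psi^{\tilde M}_{\tilde E}(\overline{x}, y)) \leq \omega(z) + \epsilon$. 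As $\epsilon$, $\omega$, and $y$ were arbitrary, $\varphi^{\tilde M}_{\tilde E}(\overline{x}) \leq z$, completing the induction.
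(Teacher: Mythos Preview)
Your proposal is correct and follows essentially the same route as the paper's proof: induction on complexity, the basic and connective cases handled via the $*$-homomorphism $x \mapsto (e_j \otimes 1)x$, the lower bound in the $\sup$ case via Proposition~\ref{prop: maximizer} and pasting, and the upper bound by approximating $y \in (\tilde M)_1$ by elements of the form $\sum_\ell f_\ell \otimes n_\ell$ (Kaplansky density), passing to the common refinement $\{e_j f_\ell\}$, and closing the gap with Lemma~\ref{uniform-continuity}. The only cosmetic difference is that the paper first establishes the inequality on the SOT-$*$ dense set $S$ and then passes to the limit via all normal states, whereas you fix a normal state $\omega$ up front and approximate in $\norm{\cdot}_{2,\omega \circ \tilde E}$; the content is the same.
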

	
	\begin{proof}
		We proceed by induction on the complexity of the formula.  As in the proof of Lemma \ref{formula-partition}, the cases for atomic formulae and adding connectives follows from the fact that $\tilde{M} \ni x \mapsto (e_j \otimes 1)x \in (e_j \otimes 1)\tilde{M}$ is a *-homomorphism.  As before, the $\sup$ and $\inf$ cases are symmetrical, and so suppose that $\varphi(\overline{x}) = \sup_y \psi(\overline{x},y)$ where $\psi$ satisfies the induction hypothesis.
		
		Let $e_j$, $m_{i,j}$, and $x_i$ be as in the theorem statement.  Let $\epsilon > 0$.  For each $j$, by Proposition \ref{prop: maximizer}, there exists $n_j$ such that
		\[
		\psi_E^M(\overline{m}_j,n_j) \geq \varphi_E^M(\overline{m}_j) - \epsilon.
		\]
		Let $y = \sum_{j \in J} e_j \otimes n_j$.  Applying the induction hypothesis to $\psi$, we obtain
		\[
		\varphi_{\tilde{E}}^{\tilde{M}}(\overline{x}) \geq \psi_{\tilde{E}}^{\tilde{M}}(\overline{x},y) = \sum_{j \in J} e_j \otimes \psi_E^M(\overline{m}_j, n_j) \geq \sum_{j \in J} e_j \otimes (\varphi_E^M(\overline{m}_j) - \epsilon) \geq \sum_{j \in J} e_j \otimes \varphi_E^M(\overline{m}_j) - \epsilon.
		\]
		Therefore, $\varphi_{\tilde{E}}^{\tilde{M}}(\overline{x}) \geq \sum_{j \in J} e_j \otimes \varphi_E^M(\overline{m}_j)$.
		
		In order to prove the reverse inequality, we start by considering $y$ in certain a dense subset of $(\tilde{M})_1$ with respect to the strong-$*$ operator topology (SOT-$*$).  Let
		\[
		S = \left\{\sum_{k \in K} p_k \otimes n_k, \{p_k\}_{k \in K}\textrm{ is a PVM over }A, n_k \in (M)_1 \right\}.
		\]
		By the construction of tensor products and by the Kaplansky density theorem, $S$ is dense in $(\tilde{M})_1$.  Suppose that $y = \sum_{k \in K} p_k \otimes n_k$ is in $S$.  Then by the induction hypothesis applied to the partition $(e_j p_k)_{j \in J, k \in K}$, we have
		\[
		\psi_{\tilde{E}}^{\tilde{M}}(\overline{x},y) = \sum_{j \in J, k \in K} e_j p_k \otimes \psi_E^M(\overline{m}_j,n_k) \leq \sum_{j \in J, k \in K} e_j p_k \otimes \varphi_E^M(\overline{m}_j) = \sum_{j \in J} e_j \otimes \varphi_E^M(\overline{m}_j).
		\]
		
		Next, suppose that $y$ is a general element in $(\tilde{M})_1$ and write $y$ as the limit of a net $y_i \in S$.  Since $y_i \to y$ in SOT-$*$, we have that for every normal state $\tau$ on $A \overline{\otimes} N$, we have $\norm{y_\ell - y}_{2,\tau \circ \tilde{E}} \to 0$.  By Lemma \ref{uniform-continuity}, this implies that
		\[
		\norm{\psi_{\tilde{E}}^{\tilde{M}}(\overline{x},y_\ell) - \psi_{\tilde{E}}^{\tilde{M}}(\overline{x},y) }_{2, \tau \circ \tilde{E}} \to 0.
		\]
		Therefore, since this holds for every normal state, we have $\psi_{\tilde{E}}^{\tilde{M}}(\overline{x},y_\ell) \to \psi_{\tilde{E}}^{\tilde{M}}(\overline{x},y)$.  The preceding argument showed that 
		\[
		\psi_{\tilde{E}}^{\tilde{M}}(\overline{x},y_\ell) \leq \sum_{j \in J} e_j \otimes \varphi_E^M(\overline{m}_j),
		\]
		and therefore
		\[
		\psi_{\tilde{E}}^{\tilde{M}}(\overline{x},y) \leq \sum_{j \in J} e_j \otimes \varphi_E^M(\overline{m}_j).
		\]
		Since $y \in (\tilde{M})_1$ was arbitrary, $\varphi_{\tilde{E}}^{\tilde{M}}(\overline{x}) \leq \sum_{j \in J} e_j \otimes \varphi_E^M(\overline{m}_j)$.  We have now shown both inequalities so the proof is complete.
	\end{proof}

	This concludes the proof of Proposition \ref{prop: Los ultrapower} and so of Proposition \ref{introprop: ultraproduct EE}.  This proposition allows the Keisler--Shelah characterization of elementary equivalence to be extended to ultrafilters on measure spaces.  We recall the Keisler--Shelah theorem for ultraproducts on discrete index sets here.
	
	\begin{thm}[{Keisler--Shelah, see \cite[Theorem 5.7]{BYBHU08}, \cite[Theorem 2.1(2)]{FHS2014b}}] \label{thm: KS theorem}
		$(M, \tau_M) \equiv (N, \tau_N)$ iff there exists an index set $I$ and an ultrafilter $\mathcal{U}$ on $I$ s.t. $(M,\tau_M)^\mathcal{U} \simeq (N,\tau_N)^\mathcal{U}$.
	\end{thm}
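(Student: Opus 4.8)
The plan is to prove the two directions separately, the reverse (``if'') direction being routine and the forward direction carrying the real content. For the reverse direction, suppose $(M,\tau_M)^{\mathcal{U}} \simeq (N,\tau_N)^{\mathcal{U}}$ for some ultrafilter $\mathcal{U}$ on an index set $I$. Applying Proposition \ref{prop: theories as characters}(3) to the constant field where every fiber equals $M$ gives $\operatorname{Th}(M^{\mathcal{U}}) = \lim_{\mathcal{U}} \operatorname{Th}(M) = \operatorname{Th}(M)$, since the ultralimit of a constant net is that constant; thus $M \equiv M^{\mathcal{U}}$, and likewise $N \equiv N^{\mathcal{U}}$. Because isomorphic structures are elementarily equivalent, $M \equiv M^{\mathcal{U}} \simeq N^{\mathcal{U}} \equiv N$, so $M \equiv N$.

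For the forward direction, the strategy is to produce a single ultrafilter whose associated ultrapowers of $M$ and $N$ are both highly saturated of the same density character, and then invoke uniqueness of saturated models. Concretely, fix an infinite cardinal $\kappa$ bounding the density characters of $M$ and $N$ as well as the size of the language, and choose a countably incomplete, $\kappa^+$-good ultrafilter $\mathcal{U}$ on an index set $I$ of cardinality $\kappa$; the existence of such ultrafilters is the key set-theoretic input (due to Keisler under GCH and to Kunen in general). With this choice, Proposition \ref{prop: theories as characters}(3) again yields $M \equiv M^{\mathcal{U}}$ and $N \equiv N^{\mathcal{U}}$, while goodness together with countable incompleteness guarantees that both $M^{\mathcal{U}}$ and $N^{\mathcal{U}}$ are $\kappa^+$-saturated metric structures (see \cite[Theorem 5.7]{BYBHU08} and the surrounding discussion for the metric-structure formulation).

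It then remains to upgrade elementary equivalence of two saturated structures into an isomorphism. The plan is a back-and-forth construction of length equal to the common density character: at each stage one has a partial elementary map defined on a subset of size $\leq \kappa$, and $\kappa^+$-saturation lets one realize on each side the type of the next generator over the image of the current domain, so that in the limit one assembles a trace-preserving $*$-isomorphism $M^{\mathcal{U}} \to N^{\mathcal{U}}$. The main obstacle, and the reason this is Shelah's theorem rather than Keisler's, is arranging that $M^{\mathcal{U}}$ and $N^{\mathcal{U}}$ share the same density character without assuming GCH; this requires checking that the density character of an ultrapower by a countably incomplete good ultrafilter depends only on $\kappa$ and not on the (bounded) density of the base, so that the back-and-forth can be carried out symmetrically to exhaustion. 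In the separable setting relevant to this paper one may bypass the general construction entirely: for any nonprincipal $\mathcal{U}$ on $\mathbb{N}$ the ultrapowers $M^{\mathcal{U}}$ and $N^{\mathcal{U}}$ are countably saturated of density character $2^{\aleph_0}$, so under the continuum hypothesis they are saturated of density character $\aleph_1$ and the back-and-forth of length $\aleph_1$ completes the argument directly.
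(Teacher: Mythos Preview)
The paper does not prove this theorem; it is stated with attribution to \cite[Theorem 5.7]{BYBHU08} and \cite[Theorem 2.1(2)]{FHS2014b} and used as a black box. So there is no ``paper's own proof'' to compare against. Your sketch follows the standard route taken in those references: {\L}o{\'s} for the easy direction, and for the hard direction, good ultrafilters plus uniqueness of saturated models via back-and-forth.

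A few remarks on the sketch itself. The reverse direction is fine and matches exactly how the paper uses {\L}o{\'s} elsewhere (e.g.\ in Proposition \ref{prop: Los ultrapower}). For the forward direction, your outline is correct in spirit but glosses over the genuinely hard part: ensuring that $M^{\mathcal{U}}$ and $N^{\mathcal{U}}$ have the \emph{same} density character so that the back-and-forth runs to exhaustion on both sides. A $\kappa^+$-good countably incomplete ultrafilter gives $\kappa^+$-saturation, hence density character at least $\kappa^+$, but the upper bound is $2^\kappa$; without GCH at $\kappa$ these need not coincide. Shelah's contribution is precisely to bypass this gap, and the argument is more delicate than ``the density character depends only on $\kappa$'' --- one iterates the ultrapower or works with a more carefully chosen index set. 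Your final paragraph about the separable/CH case is accurate and is indeed how the paper itself handles analogous isomorphism questions (see \S\ref{sec: saturation}), but that special case does not establish the theorem as stated in full generality.
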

	
	We now have the following corollary.
	
	\begin{col}\label{k-s-l}
		Let $(M, \tau_M)$ and $(N, \tau_N)$ be two tracial von Neumann algebras, then TFAE,
		\begin{enumerate}
			\item $(M, \tau_M) \equiv (N, \tau_N)$;
			\item There exists a character $\mathcal{U}$ on a commutative von Neumann algebra $A$ s.t. $(M,\tau_M)^\mathcal{U} \simeq (N,\tau_N)^\mathcal{U}$.
		\end{enumerate}
	\end{col}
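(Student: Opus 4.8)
The plan is to prove the two implications separately. The implication (2) $\Rightarrow$ (1) should follow immediately from {\L}o{\'s}'s theorem for generalized ultrapowers, while (1) $\Rightarrow$ (2) should reduce to the ordinary Keisler--Shelah theorem once we recognize classical ultrapowers as a special case of the generalized construction.

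First I would establish (2) $\Rightarrow$ (1). Suppose $\mathcal{U}$ is a character on a commutative von Neumann algebra $A$ with $(M,\tau_M)^\mathcal{U} \simeq (N,\tau_N)^\mathcal{U}$. Since characters on a commutative $\mathrm{C}^\ast$-algebra coincide with its pure states, Proposition \ref{prop: Los ultrapower} (equivalently, Proposition \ref{introprop: ultraproduct EE}) applies and gives $M^\mathcal{U} \equiv M$ and $N^\mathcal{U} \equiv N$. As isomorphic tracial von Neumann algebras are a fortiori elementarily equivalent, the hypothesized isomorphism gives $M^\mathcal{U} \equiv N^\mathcal{U}$, and then transitivity of elementary equivalence yields
\[
M \equiv M^\mathcal{U} \equiv N^\mathcal{U} \equiv N,
\]
which is (1).

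For (1) $\Rightarrow$ (2), I would invoke the ordinary Keisler--Shelah theorem (Theorem \ref{thm: KS theorem}): from $(M,\tau_M) \equiv (N,\tau_N)$ it produces an index set $I$ and an ultrafilter $\mathcal{U}$ on $I$ with $(M,\tau_M)^\mathcal{U} \simeq (N,\tau_N)^\mathcal{U}$ as ordinary ultrapowers. The point is then simply that this is already an instance of (2): taking $A = \ell^\infty(I)$ and regarding $\mathcal{U}$ as the character on $\ell^\infty(I)$ given by the associated ultralimit, the remark following Definition \ref{def: gen ultrapower} identifies the ordinary ultrapower with the generalized ultrapower, so the Keisler--Shelah isomorphism is exactly an isomorphism of generalized ultrapowers over the commutative algebra $\ell^\infty(I)$.

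Since both directions are inherited directly from previously established results, I do not anticipate any real obstacle here; the only step meriting attention is the identification of the classical ultrapower with a generalized one in (1) $\Rightarrow$ (2), and this has already been recorded in the remark after Definition \ref{def: gen ultrapower}. All the genuine content sits in Proposition \ref{prop: Los ultrapower}, whose {\L}o{\'s}-type conclusion is what makes the generalized ultrapowers transparent enough for this equivalence to go through.
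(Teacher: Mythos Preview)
Your proposal is correct and follows essentially the same approach as the paper's proof: both directions are handled exactly as you describe, with (1) $\Rightarrow$ (2) reducing to the classical Keisler--Shelah theorem via the identification of ordinary ultrapowers as generalized ones over $\ell^\infty(I)$, and (2) $\Rightarrow$ (1) following from Proposition \ref{prop: Los ultrapower} and transitivity.
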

	
	\begin{proof}
		(1) $\implies$ (2).  If $(M,\tau_M) \equiv (N,\tau_N)$, then the Keisler-Shelah theorem implies that $M^{\cU} \cong N^{\cU}$ for some ultrafilter $\cU$ on a discrete index set (i.e., a pure state on $\ell^\infty(I)$ for some index set).
		
		(2) $\implies$ (1).  This follows from Proposition \ref{prop: Los ultrapower} and the transitivity of elementary equivalence.
	\end{proof}
	
	\section{Disintegration for elementary equivalence} \label{sec: distribution}
	
	The goal of this section is to prove Theorem \ref{introthm: EE disintegration}.  It will become clear in the proof that the main case of interest is when the measure spaces are diffuse.  In this case, we will see that the right perspective on this result is to look at the distribution of the theory $\mathrm{Th}(M_\omega)$ as a random variable with values in the space of theories of tracial factors (see \cite[Definition 5.2]{FG24}).  The crux of the proof is to show that if $M \equiv N$, then the distribution of theories for $M$ and $N$ coincide.
	
	\subsection{The distribution of theories} \label{subsec: distribution}
	
	Suppose that $M$ is a tracial von Neumann algebra, $N \subseteq Z(M)$ is a von Neumann subalgebra, and $E: M \to N$ is the unique trace-preserving expectation.  If $\varphi$ is a sentence in the language of tracial von Neumann algebras, then $\varphi_E^M$ is an element of $N$.  Since $(N,\tau|_N)$ is a commutative tracial von Neumann algebra, it is isomorphic to $L^\infty(\Omega,\mu)$ for some complete probability measure space.  Thus, $\varphi_E^M$ can be regarded as a real random variable on $\Omega$.  Hence, it makes sense to speak of its probability distribution, which is its spectral measure on $\R$ with respect to the tracial state.  More generally, we can consider the joint probability distribution of $((\varphi_1)_E^M,\dots, (\varphi_k)_E^M)$ for several sentences $\varphi_1$, \dots, $\varphi_k$, which gives a probability measure on $\R^k$.  As a concrete example, recall that in the case that $M$ is a direct integral over $N = L^\infty(\Omega,\mu)$, then $\varphi_E^M$ coincides with the measurable function $\omega \mapsto \varphi^{M_\omega}$.  Thus, the elements $((\varphi_1)_E^M,\dots, (\varphi_k)_E^M)$ are simply random variables on the underlying probability space for the direct integral.
	
	We actually want to consider the joint distribution of all sentences together, or equivalently, instead of looking at random variables in $\R^k$, we want to look at random variables taking values in the space of theories of tracial von Neumann algebras.  Recall from Definition \ref{def: C* algebra of predicates} that $\mathcal{P}_\varnothing$ is the real $\mathrm{C}^*$-algebra obtained as the separation-completion of the algebra of sentences with respect to $\norm{\cdot}_u$.  We showed in Proposition \ref{prop: theories as characters} that theories of tracial von Neumann algebras are in bijection with characters on $\mathcal{P}_{\varnothing}$.  Let $\Spec(\mathcal{P}_{\varnothing})$ be the space of characters, i.e., the Gelfand spectrum, of $\mathcal{P}_{\varnothing}$, equipped with the weak-$*$ topology, so that $\mathcal{P}_{\varnothing} \cong C(\Spec(\mathcal{P}_{\varnothing});\R)$.  Given $N \subseteq M$ as above, there is a $*$-homomorphism $\ev_E^M: \mathcal{P}_{\varnothing} \to N$ given by $\varphi \mapsto \varphi_E^M$ for sentences (see Remark \ref{rem: boundedness}).  Hence, $\tau_N \circ \ev_E^M$ is a state on $\mathcal{P}_{\varnothing}$, or equivalently a probability measure on $\Spec(\mathcal{P}_{\varnothing})$, which is the distribution that we seek.
	
	\begin{defn}[Distribution of theories]
		We denote by $\mathbb{T}_{\tr} = \Spec(\mathcal{P}_{\varnothing})$ the space of theories of tracial von Neumann algebras with the weak-$*$ topology.  Given a tracial von Neumann algebra $(M,\tau)$ and $N \subseteq Z(M)$, let $E: M \to N$ be the unique trace-preserving conditional expectation.  Let $\ev_E^M: \mathcal{P}_{\varnothing} \to N$ be the unique $*$-homomorphism given by $\varphi \mapsto \varphi_E^M$ on sentences (see Remark \ref{rem: boundedness}).  The \emph{distribution of theories for $M$ over $N$} is the probability measure on $\mathbb{T}_{\tr}$ corresponding to the state $\tau|_N \circ \ev_E^M$ on $\mathcal{P}_{\varnothing}$.
		
		In the case that $N = Z(M)$ and $E$ is the center-valued trace, then we call this probability measure simply the \emph{distribution of theories for $M$}.
	\end{defn}
	
	A more measure-theoretic description of the distribution of theories can be found in Farah and Ghasemi's paper \cite[Definition 5.2]{FG24}.  As one would expect, in the case of direct integrals this can be formulated more concretely.
	
	\begin{lemma} \label{lem: direct integral distribution of theories}
		Suppose $(M,\tau) = \int_{\Omega}^{\oplus} (M_\omega,\tau_\omega)$ is a direct integral of separable tracial von Neumann algebras.  Then $\omega \mapsto \Th(M_\omega,\tau_\omega)$ is a random variable taking values in $\mathbb{T}_{\tr}$, and the distribution of theories for $M$ over $N$ is the distribution of this random variable.
	\end{lemma}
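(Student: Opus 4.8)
The plan is to reduce both assertions to Lemma \ref{direct-integral-case} (in the case of no free variables) together with the Gelfand identification $\mathcal{P}_{\varnothing} \cong C(\mathbb{T}_{\tr};\R)$. Throughout, write $N = L^\infty(\Omega)$ and let $E \colon M \to N$ be the expectation $E[x](\omega) = \tau_\omega(x(\omega))$, and let $\Theta \colon \Omega \to \mathbb{T}_{\tr}$ denote the map $\omega \mapsto \Th(M_\omega,\tau_\omega)$.

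\emph{Measurability of $\Theta$.} For a sentence $\varphi$, the coordinate function $\omega \mapsto \langle \Theta(\omega),\varphi\rangle = \varphi^{M_\omega}$ coincides (almost everywhere) with the element $\varphi_E^M \in L^\infty(\Omega)$ by Lemma \ref{direct-integral-case} applied with no free variables; in particular it is measurable. Now $\mathbb{T}_{\tr} = \Spec(\mathcal{P}_{\varnothing})$ is compact and metrizable by Lemma \ref{lem: separability}, and it sits inside the weak-$*$ dual of the separable real $\mathrm{C}^*$-algebra $\mathcal{P}_{\varnothing}$; hence its Borel $\sigma$-algebra is generated by the evaluation functionals $T \mapsto \langle T, a\rangle$ for $a \in \mathcal{P}_{\varnothing}$. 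Since sentences are $\norm{\cdot}_u$-dense in $\mathcal{P}_{\varnothing}$ and each evaluation at $a$ is a pointwise limit of evaluations at sentences, measurability of every coordinate $\omega \mapsto \varphi^{M_\omega}$ upgrades to Borel measurability of $\Theta$. Thus $\Theta$ is a random variable taking values in $\mathbb{T}_{\tr}$.

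\emph{Identification of the distribution.} Two Borel probability measures on the compact metrizable space $\mathbb{T}_{\tr}$ agree provided they integrate the same against every element of $C(\mathbb{T}_{\tr};\R) = \mathcal{P}_{\varnothing}$, where $\varphi$ corresponds to its Gelfand transform $\hat{\varphi}(T) = \langle T,\varphi\rangle$. By definition the distribution of theories for $M$ over $N$ is the measure associated to the state $\tau|_N \circ \ev_E^M$, and it suffices to test both measures on the $\norm{\cdot}_u$-dense set of sentences. For the pushforward $\Theta_* \mu$, the change-of-variables formula gives
\[
\int_{\mathbb{T}_{\tr}} \hat{\varphi}\, d(\Theta_* \mu) = \int_\Omega \hat{\varphi}(\Th(M_\omega,\tau_\omega))\, d\mu(\omega) = \int_\Omega \varphi^{M_\omega}\, d\mu(\omega).
\]
On the other hand, $\ev_E^M(\varphi) = \varphi_E^M$ is by Lemma \ref{direct-integral-case} the element $\omega \mapsto \varphi^{M_\omega}$ of $L^\infty(\Omega)$, and $\tau|_N$ is integration against $\mu$ on $N = L^\infty(\Omega)$, so
\[
(\tau|_N \circ \ev_E^M)(\varphi) = \int_\Omega \varphi^{M_\omega}\, d\mu(\omega).
\]
The two expressions agree for every sentence $\varphi$, hence on all of $\mathcal{P}_{\varnothing}$, so $\Theta_* \mu$ equals the distribution of theories for $M$ over $N$.

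The only genuinely non-formal point is the measurability of $\Theta$ as a map into $\mathbb{T}_{\tr}$: one must know that coordinatewise measurability (in each sentence) implies Borel measurability, which is exactly where the separability/metrizability supplied by Lemma \ref{lem: separability} and the weak-$*$ description of $\Spec(\mathcal{P}_{\varnothing})$ from Proposition \ref{prop: theories as characters} are used. Once this is in place, the computation of the distribution is bookkeeping built on Lemma \ref{direct-integral-case} and the Gelfand picture.
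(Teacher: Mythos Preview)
Your proof is correct and follows essentially the same approach as the paper's: both arguments establish measurability of $\Theta$ by reducing to coordinate maps $\omega \mapsto \varphi^{M_\omega}$ via Lemma \ref{direct-integral-case} and invoking the separability/metrizability of $\mathbb{T}_{\tr}$ from Lemma \ref{lem: separability}, and both verify the equality of distributions by testing against sentences and extending by density to all of $\mathcal{P}_{\varnothing} = C(\mathbb{T}_{\tr};\R)$.
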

	
	\begin{proof}
		First, to see why $\Th(M_\omega,\tau_\omega)$ is a measurable function, it suffices to show that each open set is measurable.  Recall that $\mathbb{T}_{\tr}$ is separable and metrizable, and the topology is generated by the continuous functions given by elements of $\mathcal{P}_{\varnothing}$, so it suffices to check that $\omega \mapsto \Th(M_\omega,\tau_\omega)[\varphi]$ is measurable for each $\varphi \in \mathcal{P}_{\varnothing}$.  Since formulas give a dense subset of $\mathcal{P}_{\varnothing}$, it suffices to check measurability for formulas, which follows Lemma \ref{direct-integral-case}.
		
		Next, to show that the distribution of $\omega \mapsto \Th(M_\omega,\tau_\omega)$ agrees with the abstract distribution of theories, it suffices to check that both measures agree when integrated against any continuous function on $\mathbb{T}_{\tr}$.  By Lemma \ref{direct-integral-case}, for a sentence $\varphi$, we have
		\[
		\tau_N(\varphi_E^M) = \int_{\Omega} \varphi^{M_\omega}\,d\mu(\omega) = \int_{\Omega} \Th(M_\omega,\tau_\omega)[\varphi]\,d\mu(\omega).
		\]
		By density this extends to all functions in $\mathcal{P}_{\varnothing} = C(\mathbb{T}_{\tr};\R)$, so the claim is proved.
	\end{proof}

	In order to prove that the distribution of theories is preserved by elementary equivalence, we first show the following.  The proof can be thought of as a generalization of the method for Corollary \ref{introcol: Ozawa's question EE}.
	
	\begin{prop} \label{prop: distribution ultrapower}
		Let $(M, \tau)$ be a tracial von Neumann algebra, and let $N$ be a von Neumann subalgebra of its center.  Let $\mathcal{U}$ be a character on a commutative von Neumann algebra $A$, and consider the generalized ultrapowers $N^{\cU} \subseteq M^{\cU}$.  Then the distribution of theories for $M$ over $N$ is the same as the distribution of theories for $M^{\cU}$ over $N^{\cU}$. In particular, the distribution of theories for $M$ over $Z(M)$ is the same as that for $M^{\cU}$ over $Z(M^{\cU})$.
	\end{prop}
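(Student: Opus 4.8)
The plan is to reduce the equality of the two distributions of theories to an equality of states on $\mathcal{P}_{\varnothing}$, and then to compute the relevant operator-valued interpretations inside the ultrafiber using the machinery of Section \ref{sec: Los}. Write $\tilde{E} = \id \otimes \tau \colon A \overline{\otimes} M \to A$ and $\tilde{E}_N = \id \otimes \tau|_N \colon A \overline{\otimes} N \to A$, so that by Definition \ref{def: gen ultrapower} we have $M^{\cU} = (A \overline{\otimes} M)^{/\tilde{E},\cU}$ and $N^{\cU} = (A \overline{\otimes} N)^{/\tilde{E}_N,\cU}$; let $\pi$ and $\pi_N$ denote the corresponding quotient maps. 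Since both distributions are probability measures on the compact metrizable space $\mathbb{T}_{\tr}$, equivalently states on $\mathcal{P}_{\varnothing} \cong C(\mathbb{T}_{\tr};\R)$, and since sentences are dense in $\mathcal{P}_{\varnothing}$, it suffices to prove that
\[
\tau_{N^{\cU}}\bigl( \varphi_{\hat{E}}^{M^{\cU}} \bigr) = \tau|_N\bigl( \varphi_E^M \bigr)
\]
for every sentence $\varphi$, where $\hat{E} \colon M^{\cU} \to N^{\cU}$ is the trace-preserving conditional expectation used in the definition of the distribution of theories for $M^{\cU}$ over $N^{\cU}$.

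First I would set up $\hat{E}$ concretely. Let $F = \id \otimes E \colon A \overline{\otimes} M \to A \overline{\otimes} N$; since $\tau|_N \circ E = \tau$, one has $\tilde{E}_N \circ F = \tilde{E}$, and the Kadison--Schwarz inequality $F(x)^* F(x) \le F(x^*x)$ together with positivity of $\cU$ shows that $F$ carries $I_{\tilde{E},\cU}$ into $I_{\tilde{E}_N,\cU}$. Hence $F$ descends to a map $\hat{E} \colon M^{\cU} \to N^{\cU}$ with $\pi_N \circ F = \hat{E} \circ \pi$. Because $F$ restricts to the identity on $A \overline{\otimes} N$ and $\tilde{E}_N \circ F = \tilde{E}$, the map $\hat{E}$ is unital, positive, restricts to the identity on $N^{\cU}$, and is trace-preserving, so by uniqueness it is the trace-preserving conditional expectation onto $N^{\cU}$. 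I would also record that $N \subseteq Z(M)$ gives $A \overline{\otimes} N \subseteq Z(A \overline{\otimes} M)$, hence $N^{\cU} = \pi(A \overline{\otimes} N) \subseteq Z(M^{\cU})$, so that the $N^{\cU}$-valued interpretation $\varphi_{\hat{E}}^{M^{\cU}}$ is well-defined.

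The heart of the argument is an operator-valued {\L}o{\'s} theorem in this setting: for every formula $\varphi$ and every tuple $\overline{z} \in (A \overline{\otimes} M)_1^n$,
\[
\varphi_{\hat{E}}^{M^{\cU}}(\pi(\overline{z})) = \pi_N\bigl( \varphi_F^{A \overline{\otimes} M}(\overline{z}) \bigr),
\]
which I would prove by induction on the complexity of $\varphi$, exactly paralleling the proof of Theorem \ref{thm: continuous Los}. The basic-formula and connective cases are immediate from the fact that $\pi_N$ is a $*$-homomorphism intertwining $\hat{E}$, $F$, and continuous functional calculus. The quantifier case $\varphi = \sup_y \psi$ is the main obstacle: since every element of $(M^{\cU})_1$ lifts to an element of $(A \overline{\otimes} M)_1$ (as in the proof of Theorem \ref{thm: continuous Los}, via Proposition \ref{prop: ultrafiber-well-defined}), the induction hypothesis gives $\varphi_{\hat{E}}^{M^{\cU}}(\pi(\overline{z})) = \sup_{y} \pi_N(\psi_F^{A\overline{\otimes}M}(\overline{z},y))$, and the inequality ``$\le$'' follows from order-preservation of $\pi_N$. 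The reverse inequality cannot use normality of $\pi_N$, which need not hold; instead I would invoke Proposition \ref{prop: maximizer} (applied with $A \overline{\otimes} N \subseteq Z(A \overline{\otimes} M)$ and conditional expectation $F$) to produce, for each $\epsilon > 0$, a single $y \in (A \overline{\otimes} M)_1$ with $\psi_F^{A\overline{\otimes}M}(\overline{z},y) \ge \varphi_F^{A\overline{\otimes}M}(\overline{z}) - \epsilon$ in $(A \overline{\otimes} N)_{\sa}$, and then apply the unital positive map $\pi_N$ and let $\epsilon \to 0$.

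Finally, specializing to a sentence $\varphi$ and combining the displayed identity with Proposition \ref{prop: tensor formula}, which gives $\varphi_F^{A \overline{\otimes} M} = 1_A \otimes \varphi_E^M$, yields $\varphi_{\hat{E}}^{M^{\cU}} = \pi_N(1_A \otimes \varphi_E^M)$. Applying the definition of the trace on the ultrafiber then gives $\tau_{N^{\cU}}(\pi_N(1_A \otimes \varphi_E^M)) = \cU(\tilde{E}_N(1_A \otimes \varphi_E^M)) = \cU(\tau|_N(\varphi_E^M)\, 1_A) = \tau|_N(\varphi_E^M)$, which is the required equality of states and hence of distributions. For the last assertion, taking $N = Z(M)$ and applying Proposition \ref{prop: center of ultrafiber} to the ultrafiber $M^{\cU} = (A \overline{\otimes} M)^{/\tilde{E},\cU}$ identifies $Z(M^{\cU}) = \pi(Z(A \overline{\otimes} M)) = \pi(A \overline{\otimes} Z(M)) = Z(M)^{\cU} = N^{\cU}$, so the general case applies verbatim.
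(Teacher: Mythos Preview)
Your proof is correct, but the route differs from the paper's. You establish the key identity $\varphi_{\hat{E}}^{M^{\cU}} = \pi_N(\varphi_F^{A\overline{\otimes}M})$ by a fresh induction on formula complexity, essentially re-running the proof of Theorem~\ref{thm: continuous Los} with the $*$-homomorphism $\pi_N$ in place of a scalar character and using Proposition~\ref{prop: maximizer} to handle the quantifier step. The paper instead proves the same identity by testing against all characters $\cV$ on $N^{\cU}$: it observes that the iterated quotient $(M^{\cU})^{/\hat{E},\cV}$ coincides with the single ultrafiber $(A\overline{\otimes}M)^{/F,\cV\circ q}$, and then applies the scalar {\L}o{\'s} theorem (Theorem~\ref{thm: continuous Los}) to both sides to get $\cV[\varphi_{\hat{E}}^{M^{\cU}}] = \cV\circ q[\varphi_F^{A\overline{\otimes}M}]$ for every $\cV$. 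The paper's approach is slicker in that it reuses Theorem~\ref{thm: continuous Los} as a black box rather than redoing the induction, and the iterated-ultrafiber identification is a nice structural observation; your approach has the advantage of yielding a genuinely more general statement (an operator-valued {\L}o{\'s} theorem valid for formulas with free variables, not just sentences) and of being entirely self-contained without the two-stage quotient argument. Both finish identically via Proposition~\ref{prop: tensor formula} and the diagonal embedding being trace-preserving.
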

	
	\begin{proof}
		Let $q$ denote the natural quotient map $q: A \overline{\otimes} M \to M^\mathcal{U}$.  Note that the trace $\cU \circ (\id_A \otimes \tau)$ on $A \overline{\otimes} M$ restricts on $A \overline{\otimes} N$ to the trace $\cU \circ (\id_A \otimes \tau|_N)$, and hence we have a natural trace-preserving embedding $N^{\cU} \to M^{\cU}$.  Furthermore, letting $\tilde{E}: M^{\cU} \to N^{\cU}$ be the unique trace-preserving conditional expectation, we have
		\[
		\tilde{E} \circ q = q \circ (\id_A \otimes E).
		\]
		Indeed, for each $x \in A \overline{\otimes} M$, $(\id_A \otimes E)(x)$ is an element in $A \overline{\otimes} N$ satisfying
		\[
		(\id_A \otimes \tau)((\id_A \otimes E)(x) y) = (\id_A \otimes \tau)(xy) \text{ for } y \in A \overline{\otimes} N.
		\]
		By applying $\cU$ to both sides, we see that $q((\id_A \otimes E)(x))$ is an element in $N^{\cU}$ with the same inner product as $q(x)$ with every element of $N^{\cU}$.
		
		Let $\mathcal{V}$ be a character on $N^{\cU}$. We claim that $(M^\mathcal{U})^{/\tilde{E}, \mathcal{V}} \cong (A \mathbin{\overline{\otimes}} M)^{/\id_A \otimes E, \mathcal{V} \circ q}$.  Indeed, $(A \overline{\otimes} M)^{/\id_A \otimes E,\mathcal{V} \circ q}$ is obtained by quotienting $A \overline{\otimes} M$ by the ideal $\{x: \mathcal{V} \circ q \circ (\id_A \otimes E)(x^*x) = 0\}$.  Meanwhile, $(M^\mathcal{U})^{/\tilde{E}, \mathcal{V}}$ is obtained by quotienting out the ideal $\{x: \cU \circ (\id_A \otimes \tau)(x^*x) = 0\}$, and then further quotienting by $\{y \in M^{\cU}: \mathcal{V} \circ \tilde{E}(y^*y) = 0\}$.
		But by the first paragraph, $\mathcal{V} \circ \tilde{E} \circ q = \mathcal{V} \circ q \circ (\id_A \otimes E)$, and hence these two quotients are the same.
		
		By Theorem \ref{thm: continuous Los}, for every sentence $\varphi$,
		\[
		\mathcal{V}\left[\varphi_{\tilde{E}}^{M^{\cU}} \right] = \varphi^{(M^\mathcal{U})^{/\tilde{E}, \mathcal{V}}} = \varphi^{(A \mathbin{\overline{\otimes}} M)^{/\id_A \otimes E, \mathcal{V} \circ q}} = \mathcal{V} \circ q \left[ \varphi_{\id_A \otimes E}^{A \overline{\otimes} M} \right].
		\]
		Since $\cV$ was an arbitrary pure state,
		\[
		\varphi_{\tilde{E}}^{M^{\cU}} = q\left[ \varphi_{\id_A \otimes E}^{A \overline{\otimes} M} \right]
		\]
		By Proposition \ref{prop: tensor formula},
		\[
		\varphi_{\id_A \otimes E}^{A \overline{\otimes} M} = 1_A \otimes \varphi_E^M.
		\]
		Therefore,
		\[
		\varphi_{\tilde{E}}^{M^{\cU}} = q \left[ 1_A \otimes \varphi_E^M \right],
		\]
		or in other words, $\varphi_{\tilde{E}}^{M^{\cU}}$ equals the image of $\varphi_E^M$ under the diagonal embedding $N \to N^{\cU}$.  Since the diagonal embedding is trace-preserving, we have
		\[
		\tau_{N^{\cU}}[\varphi_{\tilde{E}}^{M^{\cU}}] = \tau_N[\varphi_E^M].
		\]
		Since this holds for all sentences $\varphi$, we have proved that the distributions of theories agree.
		
		Next, consider the second claim regarding the case where $N = Z(M)$.  By Proposition \ref{prop: center of ultrafiber}, $Z(M^{\cU}) = Z(M)^{\cU}$, so this follows from the first claim.
	\end{proof}
	
	\begin{col} \label{col: same distribution of theories}
		Let $(M, \tau_M)$ and $(N, \tau_N)$ be elementarily equivalent tracial von Neumann algebras. Then their distributions of theories coincide.
	\end{col}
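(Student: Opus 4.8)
The plan is to combine the generalized Keisler--Shelah characterization of elementary equivalence (Corollary \ref{k-s-l}) with the invariance of the distribution of theories under generalized ultrapowers (Proposition \ref{prop: distribution ultrapower}). Since $(M,\tau_M) \equiv (N,\tau_N)$, Corollary \ref{k-s-l} furnishes a commutative von Neumann algebra $A$, a character $\cU$ on $A$, and a trace-preserving isomorphism $\Phi \colon M^{\cU} \to N^{\cU}$. The ``in particular'' clause of Proposition \ref{prop: distribution ultrapower} (taking the central subalgebra to be the full center) says that the distribution of theories for $M$ equals that for $M^{\cU}$, and likewise for $N$ and $N^{\cU}$. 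Thus it suffices to show that $\Phi$ identifies the distribution of theories for $M^{\cU}$ over $Z(M^{\cU})$ with that for $N^{\cU}$ over $Z(N^{\cU})$.

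The heart of the matter is therefore that the distribution of theories over the center is an isomorphism invariant. First, $\Phi$ carries $Z(M^{\cU})$ onto $Z(N^{\cU})$, since centers are preserved by any $*$-isomorphism. Next, writing $E_M$ and $E_N$ for the respective center-valued traces, the composite $\Phi^{-1} \circ E_N \circ \Phi$ is a normal, faithful, tracial conditional expectation of $M^{\cU}$ onto $Z(M^{\cU})$; by the uniqueness clause of the generalized Dixmier averaging theorem it must equal $E_M$, so that $\Phi \circ E_M = E_N \circ \Phi$.

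With this intertwining in hand, a routine induction on the complexity of a formula shows that $\Phi(\varphi_{E_M}^{M^{\cU}}(\overline{x})) = \varphi_{E_N}^{N^{\cU}}(\Phi(\overline{x}))$ for every formula $\varphi$: the polynomial case uses that $\Phi$ is a trace-preserving $*$-homomorphism intertwining the expectations, the connective case uses that $\Phi$ restricts to a $*$-isomorphism $Z(M^{\cU}) \to Z(N^{\cU})$ and hence respects continuous functional calculus, and the quantifier case uses that $\Phi$ maps $(M^{\cU})_1$ onto $(N^{\cU})_1$ and is order-preserving on central self-adjoint elements, so it carries suprema to suprema. Specializing to sentences $\varphi$ gives $\Phi(\varphi_{E_M}^{M^{\cU}}) = \varphi_{E_N}^{N^{\cU}}$ in $Z(N^{\cU})$; applying the trace $\tau_{Z(N^{\cU})}$ and using that $\Phi$ is trace-preserving yields $\tau_{Z(N^{\cU})}(\varphi_{E_N}^{N^{\cU}}) = \tau_{Z(M^{\cU})}(\varphi_{E_M}^{M^{\cU}})$ for every sentence. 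Since sentences are $\norm{\cdot}_u$-dense in $\mathcal{P}_{\varnothing}$, the two states on $\mathcal{P}_{\varnothing}$ coincide, so the two distributions agree, completing the chain of equalities.

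The only step requiring genuine care is the intertwining $\Phi \circ E_M = E_N \circ \Phi$, which hinges on the uniqueness of the center-valued trace; granting this, the preservation of the $N$-valued interpretations is a mechanical induction mirroring the definition in \S\ref{subsec: operator valued interpretation}, and the rest is bookkeeping.
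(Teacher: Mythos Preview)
Your proof is correct and follows essentially the same approach as the paper: both use Keisler--Shelah to obtain isomorphic ultrapowers, then invoke Proposition \ref{prop: distribution ultrapower} to transfer the distribution of theories across the chain $M \leadsto M^{\cU} \cong N^{\cU} \leadsto N$. The paper simply uses the classical Keisler--Shelah theorem (Theorem \ref{thm: KS theorem}) rather than Corollary \ref{k-s-l}, and takes the isomorphism invariance of the distribution of theories as self-evident, whereas you spell out the intertwining $\Phi \circ E_M = E_N \circ \Phi$ and the induction on formula complexity explicitly---this extra detail is correct but unnecessary at this stage.
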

	
	\begin{proof}
		By the Keisler-Shelah theorem (Theorem \ref{thm: KS theorem}), there exists some ultrafilter such that $M^{\cU} \cong N^{\cU}$, which of course implies that $M^{\cU}$ and $N^{\cU}$ have the same distribution of theories.  By Proposition \ref{prop: distribution ultrapower}, $M$ and $M^{\cU}$ have the same distribution of theories, and $N$ and $N^{\cU}$ has the same distribution of theories.  Hence, $M$ and $N$ have the same distribution of theories.
	\end{proof}
	
	\subsection{Proof of Theorem \ref{introthm: EE disintegration}}

	First, we recall some measure theory background needed to create a measurable isomorphism that will exhibit fiberwise elementary equivalence. Recall that a \emph{standard Borel space} is a measurable space isomorphic to a complete separable metric space equipped with its Borel $\sigma$-algebra. Any two such spaces, if they have no atoms, are Borel-isomorphic, so that we can always assume without loss of generality that we are working on $[0,1]$ with its Borel $\sigma$-algebra.
	
	\begin{thm}[Rokhlin disintegration theorem] \label{thm: Rokhlin}
		Let $\mu$ be a Borel measure on $[0,1] \times [0,1]$.  Let $\mu_1$ be the pushforward of $\mu$ under the first coordinate projection.  Then there exists a family of probability measures $(\nu_x)_{x \in [0,1]}$ such that
		\begin{itemize}
			\item For each Borel $A \subseteq [0,1]$, $x \mapsto \nu_x(A)$ is Borel-measurable.
			\item For each Borel set $A$, let $A_x = \{y \in [0,1]: (x,y) \in A\}$.  Then $A_x$ is Borel measurable and
			\[
			\mu(A) = \int_{[0,1]} \nu_x(A_x)\,d\mu_1(x).
			\]
		\end{itemize}
	\end{thm}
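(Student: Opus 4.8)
The plan is to construct the disintegration via fiberwise cumulative distribution functions, exploiting the linear order on the second copy of $[0,1]$. We may assume without loss of generality that $\mu$ is a probability measure (normalize, noting that this scales $\mu$ and $\mu_1$ by the same constant and leaves the sought $\nu_x$ unchanged), so that $\mu_1$ is a probability measure and Radon--Nikodym derivatives exist. For each rational $t \in [0,1]$, define a finite Borel measure on the first coordinate by $\mu^{(t)}(C) = \mu(C \times [0,t])$. Since $\mu^{(t)}(C) \le \mu_1(C)$, we have $\mu^{(t)} \ll \mu_1$, so the derivative $G_t = d\mu^{(t)}/d\mu_1$ exists and may be chosen with $0 \le G_t \le 1$ $\mu_1$-a.e.; for $t = 1$ we have $\mu^{(1)} = \mu_1$ and hence $G_1 = 1$ a.e. The defining property reads, for every Borel $C$,
\[
\int_C G_t \, d\mu_1 = \mu(C \times [0,t]),
\]
and comparing these integrals for rationals $s \le t$ shows $G_s \le G_t$ $\mu_1$-a.e.

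Next I would pass from countably many a.e.-relations to a single exceptional set. Since $\Q \cap [0,1]$ is countable, there are only countably many monotonicity and normalization relations among the $G_t$, so there is a co-null Borel set $X_0 \subseteq [0,1]$ on which $t \mapsto G_t(x)$ (restricted to the rationals) is nondecreasing with values in $[0,1]$ and $G_1(x) = 1$. For $x \in X_0$ set $F_x(t) = \inf\{ G_s(x) : s \in \Q \cap [0,1],\ s > t\}$ for $t < 1$ and $F_x(1) = 1$; for $x \notin X_0$ let $F_x$ be the CDF of a fixed probability measure. Each $F_x$ is nondecreasing and right-continuous with $F_x(1) = 1$, hence is the CDF of a unique Borel probability measure $\nu_x$ on $[0,1]$ satisfying $\nu_x([0,t]) = F_x(t)$.

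It then remains to verify the two conclusions. For measurability, fix $t$: the map $x \mapsto \nu_x([0,t]) = F_x(t)$ is an infimum of countably many measurable functions, hence measurable, and the collection of Borel $A \subseteq [0,1]$ for which $x \mapsto \nu_x(A)$ is measurable is a $\lambda$-system containing the generating $\pi$-system of intervals $[0,t]$, so the monotone class theorem gives all Borel $A$. For the integral identity I would first treat rectangles $C \times [0,t]$: using dominated convergence (all $G_s \le 1$) and continuity of measure along $[0,s] \downarrow [0,t]$,
\[
\int_C \nu_x([0,t]) \, d\mu_1(x) = \lim_{s \downarrow t,\, s \in \Q} \int_C G_s \, d\mu_1 = \lim_{s \downarrow t,\, s \in \Q} \mu(C \times [0,s]) = \mu(C \times [0,t]).
\]
The rectangles $C \times [0,t]$ form a $\pi$-system generating the product Borel $\sigma$-algebra, and the family of $A$ with $\mu(A) = \int_{[0,1]} \nu_x(A_x)\, d\mu_1(x)$ is a $\lambda$-system (closed under complements because the $\nu_x$ are probability measures and $\mu$ is normalized, and under increasing unions by monotone convergence). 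A final monotone class argument, together with the fact that sections $A_x$ of Borel sets are Borel, yields the identity for all Borel $A \subseteq [0,1]^2$.

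The main obstacle is exactly the passage in the second step: the Radon--Nikodym derivatives are defined only up to null sets and only for countably many thresholds, so one must intersect the relevant null sets to obtain a single co-null $X_0$ and then regularize by right-continuity so that each surviving $F_x$ is a bona fide CDF of a \emph{countably additive} probability measure. The delicate point is to ensure this regularization does not disturb the disintegration identity, which is precisely what the dominated-convergence computation on rectangles guarantees.
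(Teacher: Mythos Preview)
The paper does not give a proof of this statement; it is stated as classical measure-theoretic background (the Rokhlin disintegration theorem) and used as a black box in the proof of Lemma~\ref{lem: diffuse fibers}. So there is no ``paper's own proof'' to compare against.

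Your argument is the standard construction of regular conditional distributions via Radon--Nikodym derivatives of the partial CDFs, and it is correct. The key steps --- defining $G_t = d\mu^{(t)}/d\mu_1$ for rational thresholds $t$, intersecting the countably many exceptional null sets to obtain a single good set $X_0$, regularizing to a right-continuous CDF $F_x$ via an infimum over rationals, and then running a $\pi$--$\lambda$ argument first for measurability of $x \mapsto \nu_x(A)$ and then for the disintegration identity on rectangles $C \times [0,t]$ --- are all sound. The dominated-convergence step justifying $\int_C F_x(t)\,d\mu_1(x) = \mu(C \times [0,t])$ for arbitrary (not just rational) $t$ is exactly the point where the regularization is reconciled with the original data, and you have handled it correctly. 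One cosmetic remark: when you say ``$F_x$ is the CDF of a unique Borel probability measure $\nu_x$ on $[0,1]$,'' you are implicitly extending $F_x$ by $0$ on $(-\infty,0)$ and $1$ on $(1,\infty)$; this is harmless since $0 \le F_x \le 1$ and $F_x(1)=1$, so $\nu_x$ is automatically supported on $[0,1]$.
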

	
	One can easily obtain the following equivalent notions of a probability space having ``diffuse fibers'' relative to a given random variable $X$.  This result could also be deduced from certain versions of the spectral theorem or from direct integral decompositions of von Neumann algebras, but we prefer to give a more self-contained proof.  Note that condition (3) is exactly the stability under tensorization assumed in Theorem \ref{introthm: EE disintegration}.
	
	\begin{lemma} \label{lem: diffuse fibers}
		Consider $[0,1]$ with Lebesgue measure.  Let $X$ be any Borel random variable $[0,1] \to [0,1]$ and let $Y$ be the identity function $[0,1] \to [0,1]$.  Let $\mu_X \in \mathcal{P}([0,1])$ be the distribution of $X$, that is, the pushforward of Lebesgue measure by the function $X$. Similarly, let $\mu_{X,Y} \in \mathcal{P}([0,1] \times [0,1])$ be the distribution of $(X,Y)$, or the pushforward Lebesgue measure under $(X,Y)$.  Let $\pi_1, \pi_2: [0,1] \times [0,1] \to [0,1]$ denote the coordinate projections.  The following are equivalent:
		\begin{enumerate}
			\item For $\mu_X$-almost every $x$, the measure $\nu_x$ in the disintegration for $\mu_{X,Y}$ from Theorem \ref{thm: Rokhlin} has no atoms.
			\item There exists an isomorphism of measure spaces $\Phi: ([0,1],\Leb) \to ([0,1],\mu_X) \times ([0,1],\Leb)$ such that $X = \pi_1 \circ \Phi$.
			\item There exists an isomorphism of measure spaces $\Psi: ([0,1],\Leb) \to ([0,1],\Leb) \times ([0,1],\Leb)$ such that $X = X \circ \pi_1 \circ \Psi$.
			\item There exists an isomorphism of measure spaces $\Psi: ([0,1],\Leb) \to ([0,1],\Leb) \times ([0,1],\Leb)$ and a random variable $Z$ such that $X = Z \circ \pi_1 \circ \Psi$.
			\item There exists a sequence of random variables $Z_n: [0,1] \to \{-1,1\}$ such that for all random variables $W: [0,1] \to [0,1]$, we have $\norm{E[Z_n W \mid X]}_{L^2} \to 0$.
		\end{enumerate}
	\end{lemma}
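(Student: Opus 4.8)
The plan is to establish the cycle of implications $(1) \Rightarrow (2) \Rightarrow (3) \Rightarrow (4) \Rightarrow (5) \Rightarrow (1)$. Two steps are essentially immediate: $(3) \Rightarrow (4)$ holds by taking $Z = X$, and throughout I will use that $X \colon ([0,1],\Leb) \to ([0,1],\mu_X)$ is measure-preserving by the definition of $\mu_X$ as a pushforward. For the disintegration $(\nu_x)$ of $\mu_{X,Y}$ provided by Theorem~\ref{thm: Rokhlin}, note that since $Y$ is the identity, $\nu_x$ is exactly the conditional law of the point $\omega$ given $X(\omega) = x$; thus the atomlessness of the fibers in $(1)$ is a statement about these conditional measures.

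For $(1) \Rightarrow (2)$, I would build $\Phi$ explicitly from conditional quantile functions. Define the conditional CDF $F_x(y) = \nu_x([0,y])$; under $(1)$ each $F_x$ is continuous and pushes $\nu_x$ forward to $\Leb$. Set $\Phi(\omega) = (X(\omega), F_{X(\omega)}(\omega))$. Joint measurability of $(x,y) \mapsto F_x(y)$ comes from the measurability clause of Theorem~\ref{thm: Rokhlin}. The first coordinate pushes $\Leb$ to $\mu_X$, and conditionally on $X = x$ the second coordinate has law $\Leb$ for every $x$, so $\Phi_*\Leb = \mu_X \otimes \Leb$. For the inverse, the conditional quantile function $G_x(t) = \inf\{y : F_x(y) \ge t\}$ is jointly measurable and inverts $F_x$ modulo null sets, so $(x,t) \mapsto G_x(t)$ is a measurable inverse to $\Phi$; hence $\Phi$ is an isomorphism of measure spaces with $\pi_1 \circ \Phi = X$.

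For $(2) \Rightarrow (3)$ I would split off a second independent coordinate. Write $\Phi = (X, V)$ as in $(2)$, so $V$ is uniform and independent of $X$; fix a measure-preserving isomorphism $\theta$ with $\theta(V) = (V_1, V_2)$, making $X, V_1, V_2$ mutually independent with $V_1, V_2$ uniform. Put $p = \Phi^{-1}(X, V_1)$ and $\Psi = (p, V_2)$. Then $\Phi(p) = (X, V_1)$, so $X \circ p = X$; moreover $p$ is measure-preserving (since $(X,V_1)_*\Leb = \mu_X \otimes \Leb$ and $\Phi^{-1}$ carries $\mu_X\otimes\Leb$ to $\Leb$) and independent of $V_2$, so $\Psi_*\Leb = \Leb \otimes \Leb$, and $\Psi$ is invertible because from $(p, V_2)$ one recovers $(X, V_1, V_2)$ and hence $\omega = \Phi^{-1}(X, V)$. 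For $(4) \Rightarrow (5)$, I would use the independent second coordinate of $\Psi$: writing $(t,s) = \Psi$ with $s \perp t$ and $X = Z(t)$ measurable over $t$, set $Z_n = r_n(s)$ for Rademacher functions $r_n$ (which are $\{-1,1\}$-valued and satisfy $r_n \rightharpoonup 0$ weakly in $L^2[0,1]$). For any $W \colon [0,1] \to [0,1]$, viewed as $W(t,s)$ via $\Psi$, we have $E[Z_n W \mid t] = \int_0^1 r_n(s) W(t,s)\,ds =: g_n(t)$, which tends to $0$ pointwise by weak convergence and is bounded by $1$; dominated convergence gives $\norm{g_n}_{L^2} \to 0$, and then $\norm{E[Z_n W \mid X]}_{L^2} = \norm{E[g_n \mid X]}_{L^2} \le \norm{g_n}_{L^2} \to 0$, using $\sigma(X) \subseteq \sigma(t)$.

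Finally, I would prove $(5) \Rightarrow (1)$ by contraposition: if the fibers have atoms on a set of positive $\mu_X$-measure, I produce a single $W$ defeating every candidate sequence. The set $\{(x,y) : \nu_x(\{y\}) > 0\}$ has countable $x$-sections, so by the Luzin--Novikov theorem it is a countable union of graphs of measurable functions $y = s_k(x)$; since the total atomic mass of the fibers is positive, summing over $k$ the quantities $\int \nu_x(\{s_k(x)\})\,d\mu_X(x)$ is positive, so some $s_k$ gives a positive term. Let $A = \{\omega : \omega = s_k(X(\omega))\}$, a measurable set meeting each fiber in at most one point. Taking $W = \mathbbm{1}_A$, for any $\{-1,1\}$-valued $Z_n$ we get $E[Z_n \mathbbm{1}_A \mid X](x) = Z_n(s_k(x))\,\nu_x(\{s_k(x)\})$, whose absolute value is exactly $\nu_x(\{s_k(x)\})$, independent of $n$; hence $\norm{E[Z_n \mathbbm{1}_A \mid X]}_{L^2}^2 = \int \nu_x(\{s_k(x)\})^2\,d\mu_X(x) > 0$ for all $n$, contradicting $(5)$. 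I expect the main obstacles to be the two descriptive-measure-theoretic points: the joint measurability of the conditional quantile functions in $(1)\Rightarrow(2)$, and the measurable selection of a single atom per fiber in $(5)\Rightarrow(1)$; both are handled by the measurability built into Theorem~\ref{thm: Rokhlin} together with standard measurable-selection results.
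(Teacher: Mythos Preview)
Your proposal is correct and follows essentially the same cycle of implications as the paper, with the same constructions: the conditional CDF/quantile map for $(1)\Rightarrow(2)$, splitting the uniform coordinate for $(2)\Rightarrow(3)$, and Rademacher functions on the independent second factor for $(4)\Rightarrow(5)$. The only cosmetic differences are that in $(4)\Rightarrow(5)$ the paper argues via density of simple tensors rather than your pointwise-plus-DCT argument, and in $(5)\Rightarrow(1)$ the paper selects an atom by hand as $f_\epsilon(x)=\inf\{y:\nu_x(\{y\})\ge\epsilon\}$ rather than invoking Luzin--Novikov; both variants are standard and neither buys anything substantive over the other.
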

	
	\begin{proof}
		(1) $\implies$ (2).  Note that $([0,1],\Leb)$ is isomorphic to $([0,1] \times [0,1], \mu_{X,Y})$ as a measure space since $(X,Y)$ generate the entire Borel $\sigma$-algebra on $[0,1]$.  Hence, it suffices to produce an isomorphism $([0,1] \times [0,1], \mu_{X,Y}) \to ([0,1],\mu_X) \times ([0,1],\Leb)$ that preserves the first coordinate.
		
		Let $F_{\nu_x}: [0,1] \to [0,1]$ be the cumulative distribution function of $\nu_x$, that is, $F_{\nu_x}(y) = \nu_x([0,y])$.  Since $\nu_x$ has no atoms, $F_{\nu_x}$ is continuous.  Since $F_{\nu_x}$ is an increasing surjective continuous function $[0,1] \to [0,1]$, there is a unique strictly increasing, right-continuous function $G_x: [0,1] \to [0,1]$ such that $F_{\nu_x} \circ G_x = \id$.  Let $F(x,y) = F_{\nu_x}(y)$ and let $G(x,y) = G_x(y)$.  We claim that $F$ and $G$ are Borel-measurable.  Indeed, note that
		\[
		\{(x,y): F(x,y) \leq t\} = \{(x,y): \nu_x([0,y]) \leq t\},
		\]
		which is Borel-measurable by the properties of the disintegration.  On the other hand,
		\[
		\{(x,y): G(x,y) \leq t \} = \{(x,y): y \leq F(x,t) \},
		\]
		which is Borel-measurable since $F$ is Borel-measurable.  Note that $(F_{\nu_x})_* \nu_x = \Leb$ since $(F_{\nu_x})_* \nu_x([0,t]) = \nu_x(F_{\nu_x}^{-1}([0,t])) = \nu_x([0,G_x(t)]) = F_{\nu_x}(G_x(t)) = t$.
		
		Now let $\tilde{F}(x,y) = (x, F(x,y))$ and $\tilde{G}(x,y) = (x,G(x,y))$. Note that $F(x,G(x,y)) = y$ by construction, and $\mu_{X,Y}$-almost everywhere we have $G(x,F(x,y)) = y$ since the set of jump discontinuities of $G_x$ has measure zero, and hence $\tilde{F}$ and $\tilde{G}$ are inverses of each other almost everywhere.  Moreover, $\tilde{F}_* \mu_{X,Y} = \mu_X \times \Leb$ because using the disintegration theorem, for $h \in C([0,1] \times [0,1])$,
		\begin{align*}
			\int_{[0,1]^2} h \circ \tilde{F}\,d\mu_{X,Y} &= \int_{[0,1]} \int_{[0,1]} h(x,F(x,y))\,d\nu_x(y)\,d\mu_X(x) \\
			&= \int_{[0,1]} \int_{[0,1]} h(x,y) \,d(F_{\nu_x})_*\nu_x(y)\,d\mu_X(x) \\
			&= \int_{[0,1]} \int_{[0,1]} h(x,y)\,dy\,d\mu_X(x) \\
			&= \int_{[0,1] \times [0,1]} h\,d(\mu_X \times \Leb).
		\end{align*}

		(2) $\implies$ (3).  Let $\Phi$ be as in (2).  Let $\Gamma: [0,1] \to [0,1] \times [0,1]$ be any isomorphism of measure spaces (for instance, one can take $\Gamma = \Phi$).  Then let $\Psi = (\Phi^{-1} \times \id_{[0,1]}) \circ (\id_{[0,1]} \times \Gamma) \circ \Phi$.  We leave the verification to the reader.
		
		(3) $\implies$ (4).  This is immediate.
		
		(4) $\implies$ (5).  Let $\Psi$ be as in (4), and let $\tilde{X} = X \circ \pi_1 \circ \Psi$.  We can use the maps $\Psi$ and $\Psi^{-1}$ to transform between random variables on $[0,1] \times [0,1]$ and $[0,1]$, hence it suffices to show that there are random variables $Z_n: [0,1] \times [0,1] \to \{-1,1\}$, such that for all random variables $W: [0,1] \times [0,1] \to [0,1]$, we have $\norm{E[Z_n W \mid \tilde{X}]}_{L^2} \to 0$.  Let $f_n: [0,1] \to \{-1,1\}$ be a function that alternates between $\pm 1$ on intervals of length $2^{-n}$.  Let $Z_n = f_n \circ \pi_2$.  Given Borel $W_1$, $W_2: [0,1] \to \R$, consider the tensor product $W_1 \otimes W_2$ on $[0,1] \times [0,1]$.  Then
		\begin{multline*}
			\norm{E[Z_n (W_1 \otimes W_2) \mid \tilde{X}]}_{L^2} \leq \norm{E[Z_n(W_1 \otimes W_2) \mid \pi_1]}_{L^2} \\
			= \norm{(W_1 \otimes 1) E[Z_n W_2]}_{L^2} = \norm{W_1}_{L^2} | E[Z_n W_2]| \to 0.
		\end{multline*}
		Since simple tensors span a dense subset of the probability space, the claim follows.
		
		(5) $\implies$ (1).  Use the same notation from the first step. Let $S_\epsilon$ be the set of $(x,y)$ such that $\nu_x$ has an atom of size $\geq \epsilon$ at $y$.  We can write
		\[
		S_\epsilon = \{(x,y): \lim_{z \to y^-} F_{\nu_x}(z) \leq F_{\nu_x}(y) - \epsilon \},
		\]
		and hence $S_\epsilon$ is a Borel set since $(x,y) \mapsto F_{\nu_x}(y)$ is Borel-measurable.  Let $(S_\epsilon)_x$ be the slice $\{y: (x,y) \in S_\epsilon\}$.  From this it is not hard to see that
		\[
		f_\epsilon(x) = \begin{cases} \inf (S_\epsilon)_x, & (S_\epsilon)_x \neq \varnothing, \\ -1, \text{ else.} \end{cases}
		\]
		is a Borel function.  Let $(Z_n)_{n \in \N}$ be a sequence of random variables as in (5).  Let $W_\epsilon = \mathbf{1}_{Y = f_\epsilon(X)}$.  By passing to a subsequence, assume that $E[W_\epsilon Z_n \mid X] \to 0$ almost surely (that is, $\mu_X$-almost everywhere).  But note that when $(S_\epsilon)_x \neq \varnothing$, we have
		\[
		|E[W_\epsilon Z_n \mid X](x)| = \left| \int_{[0,1]} Z_n(x,y) \mathbf{1}_{y = f_\epsilon(x)}\,d\nu_x(y) \right| = |Z_n(x,f_\epsilon(x)) \nu_x(\{f_\epsilon(x)))| \geq \epsilon.
		\]
		Therefore, whenever $E[W_\epsilon Z_n \mid X](x) \to 0$, we have $(S_\epsilon)_x = \varnothing$.  Hence, for $\mu_X$-almost every $x$, we have $(S_\epsilon)_x = \varnothing$.  Since this holds for every $\epsilon$, we conclude that for $\mu_X$-almost every $x$, the measure $\nu_x$ has no atoms.
	\end{proof}

	\begin{proof}[Proof of Theorem \ref{introthm: EE disintegration}]
		The measure space $(\Omega_1,\mu_1)$ can be decomposed into countably many atoms and an atomless space, which we in turn can assume is $([0,1],\Leb)$.  Thus, suppose that $\Omega_1 = I \sqcup [0,1]$ and
		\[
		(M,\tau) = \bigoplus_{i \in I} \alpha_i (M_i,\tau_i) \oplus \alpha_0 \int_{[0,1]}^{\oplus} (M_\omega,\tau_\omega)\,d\omega,
		\]
		where $(\alpha_i)_{i \in I}$ are the weights of the atoms and $\alpha_0 = 1 - \sum_{i \in I} \alpha_i$.  Let $(M_0,\tau_0) = \int_{[0,1]}^{\oplus} (M_\omega,\tau_\omega)\,d\omega$. Similarly, suppose that
		\[
		(N,\sigma) = \bigoplus_{j \in J} \beta_i (M_j,\sigma_j) \oplus \beta_0 \int_{[0,1]}^{\oplus} (N_\omega,\sigma_\omega)\,d\omega,
		\]
		and let $(N_0,\sigma_0) = \int_{[0,1]}^{\oplus} (N_\omega,\sigma_\omega)\,d\omega$.
		
		Since we assumed $(M,\tau) \equiv (N,\sigma)$, the Keisler-Shelah theorem yields some ultrafilter $\cU$ on some index set $S$ such that $(M,\tau)^{\cU} \cong (N,\sigma)^{\cU}$.  Note that there is an isomorphism
		\[
		(M,\tau)^{\cU} \cong \bigoplus_{i \in \{0\} \cup I} \alpha_i (M_i,\tau_i)^{\cU}
		\]
		in the obvious way.  Indeed, any element in the ultrapower $(M,\tau)^{\cU}$ can be lifted to some tuple $(x_s)_{s \in S}$ over the index set $S$, which in turn can be decomposed as a direct sum of elements $x_{s,i} \in M_i$,  $i \in I \cup \{0\}$.  Moreover, $\norm{x_s}_2 \to 0$ along the ultrafilter if and only if $\norm{x_{s,i}}_2 \to 0$ along the ultrafilter for each $i$.  The elements $(M_i,\tau_i)^{\cU}$ are factors for each $i \in I$ (by Proposition \ref{prop: center of ultrafiber} for instance), while $(M_0,\tau_0)^{\cU}$ has diffuse center.  Thus, the elements $1_{M_i}$ are exactly the minimal central projections in $(M,\tau)^{\cU}$ for $i \in I$.
		
		The analogous claims hold for $(N,\sigma)$ as well.  Hence, $(M,\tau)^{\cU} \cong (N,\sigma)^{\cU}$ becomes
		\[
		\bigoplus_{i \in I} \alpha_i (M_i,\tau_i)^{\cU} \oplus \alpha_0 (M_0,\tau_0)^{\cU} \cong \bigoplus_{j \in J} \beta_j (N_j,\sigma_j)^{\cU} \oplus \beta_0 (N,\sigma)^{\cU}.
		\]
		The isomorphism must map minimal central projections on the left-hand side to minimal central projections on the right-hand side.  Therefore there is a bijection $f_1: I \to J$ such that the identity in $(M_i,\tau_i)^{\cU}$ is mapped to the identity in $(N_{f_1(i)},\sigma_{f_1(i)})^{\cU}$ under the isomorphism.  It easily follows that $\alpha_i = \beta_{f_1(i)}$ and $(M_i,\tau_i)^{\cU} \cong (N_{f_1(i)},\sigma_{f_1(i)})^{\cU}$, which means that $(M_i,\tau_i) \equiv (N_{f(i)},\sigma_{f(i)})$.  Moreover, the remaining summands with diffuse center in the direct sum decomposition must also correspond under the isomorphism, so that $(M_0,\tau_0)^{\cU} \cong (N_0,\sigma_0)^{\cU}$, which means that $(M_0,\tau_0) \equiv (N_0,\sigma_0)$.
		
		By Corollary \ref{col: same distribution of theories}, $(M_0,\tau_0)$ and $(N_0,\sigma_0)$ have the same distribution of theories.  By Lemma \ref{lem: direct integral distribution of theories}, this means that the random variables $X: \omega \mapsto \Th(M_\omega,\tau_\omega)$ and $Y: \omega \mapsto \Th(N_\omega,\sigma_\omega)$ have the same distribution, or $\mu_X = \mu_Y$ as probability measures on $\mathbb{T}_{\tr}$.  Let $g: \mathbb{T}_{\tr} \to [0,1]$ be an isomorphism of standard Borel spaces, and let $\tilde{X} = g \circ X$.  Since we assumed that the diffuse part of the direct integral decomposition satisfies stability under tensorization, it is easy to see that the random variable $\tilde{X}$ satisfies condition (3) of Lemma \ref{lem: diffuse fibers}.  Hence, by condition (2), there exists a measurable isomorphism $\tilde{\Phi}: ([0,1],\Leb) \to ([0,1],\mu_{\tilde{X}}) \times ([0,1],\Leb)$ such that $\tilde{X} = \pi_1 \circ \tilde{\Phi}$.  Let $\Phi = (g^{-1} \times \id) \circ \tilde{\Phi}: ([0,1],\Leb) \to (\mathbb{T}_{\tr},\mu_{X}) \times ([0,1],\Leb)$; then we have $X = \pi_1 \circ \Phi$.  Similarly, obtain an isomorphism $\Psi: ([0,1],\Leb) \to (\mathbb{T}_{\tr},\mu_{Y}) \times ([0,1],\Leb)$ such that $Y = \pi_1 \circ \Psi$.
		
		Then $f_0 = \Psi^{-1} \circ \Phi$ gives an isomorphism $([0,1],\Leb) \to ([0,1],\Leb)$ such that $Y \circ f_0 = X$, meaning that $\Th(N_{f_0(\omega)},\sigma_{f_0(\omega)}) = \Th(M_\omega,\tau_\omega)$.  Let $f: I \sqcup [0,1] \to J \sqcup [0,1]$ be the function given by $f|_I = f_1$ and $f|_{[0,1]} = f_0$.  Then $f$ is a bijection such that $(N_{f(\omega)},\sigma_{f(\omega)}) \equiv (M_\omega, \tau_\omega)$. 
	\end{proof}
	
	\begin{rmk} \label{rmk: Thm A general case}
		In the setup of Theorem \ref{introthm: EE disintegration}, if we remove the hypothesis of stability under tensorization for the diffuse part of the direct integral decomposition, can we still find a measurable isomorphism $f: \Omega_1 \to \Omega_2$ such that $M_\omega \equiv N_{f(\omega)}$ almost surely?  We claim that the answer comes down to the behavior of the fiber measures in the distintegration of the diffuse parts of $\Omega_1$ and $\Omega_2$ over the random variables $X$ and $Y$ respectively.
		
		The proof given above still allows us to match up the direct summands over each atom in the probability spaces $(\Omega_1,\mu_1)$ and $(\Omega_2,\mu_2)$, and so it remains to consider what happens for the diffuse parts.  So for simplicity, assume that there are no atoms in $\Omega_1$ and $\Omega_2$.  Assume $\Omega_1$ and $\Omega_2$ are standard Borel spaces.  Disintegrate $\mu_1$ with respect to the random variable $X(\omega) = \Th(M_\omega,\tau_\omega)$ as $\mu_1 = \int_{\mathbb{T}_{\tr}} \nu_x \,d\mu_X(x)$, where $\nu_x \in \mathcal{P}(\Omega_1)$ is supported on $S_x = \{\omega \in \Omega_1: X(\omega) = x\}$.  This is possible since $\Omega_1$ and $\mathbb{T}_1$ are standard Borel spaces, hence isomorphic to $[0,1]$ as measure spaces, and for the same reason, Lemma \ref{lem: diffuse fibers} also applies to this disintegration.  In particular, the hypothesis of stability under tensorization means precisely that for $\mu_X$-a.e.\ $x$, the fiber measure $\nu_x$ is diffuse.  Similarly, let $\mu_2 = \int_{\mathbb{T}_{\tr}} \tilde{\nu}_y\,d\mu_Y(y)$ where $\tilde{\nu}_y$ is supported on $\tilde{S}_y = \{\omega \in \Omega_2: Y(\omega) = y\}$.  As in the proof above, we still have $\mu_X = \mu_Y$.  By construction, for each $x \in \mathbb{T}_{\tr}$, the tracial von Neumann algebras $M_\omega$ for $\omega S_x$ and $N_\omega$ for $\omega \in S_y$ have theory equal to $x$, and so they are all elementarily equivalent to each other.
		
		Thus, if the sizes of the atoms of the measures $\nu_x$ and $\tilde{\nu}_x$ agree for almost every $x$, then there is an isomorphism of $(S_x,\nu_x)$ to $(\tilde{S}_x,\tilde{\nu}_x)$.  In this case, one can show that these maps can be chosen to depend measurably on $x$, or there is a measurable isomorphism $f: (\Omega_1,\mu_1) \to (\Omega_2,\mu_2)$ sending $S_x$ to $\tilde{S}_x$ (we leave the details to the reader, but the idea is to measurably pick out atoms of a certain size in each fiber as in the proof of Lemma \ref{lem: diffuse fibers} (5) $\implies$ (1)).  Then $X(\omega) = \Th(M_{\omega}) =Y(f(\omega)) = \Th(N_{f(\omega)})$, so that $M_{\omega} \equiv N_{f(\omega)}$ almost everywhere.  Conversely, if such a measurable isomorphism exists, then it is necessary that for $\mu_X$-a.e.\ $x \in \mathbb{T}_{\tr}$, the fiber measures $\nu_x$ and $\tilde{\nu}_x$ have the same number of atoms of each size.
	\end{rmk}
	
	\section{Isomorphism classes of ultrafibers} \label{sec: saturation}
	
	In this section, we show that under the continuum hypothesis (CH), the ultrafibers obtained from separable algebras are all isomorphic to some ultraproduct over the natural numbers.  More precisely, we show the following.
	
	\begin{prop} \label{prop: isomorphism}
		Assume CH.  Let $(\Omega,\mu)$ be a standard Borel probability space and let $\cU$ be a pure state on $L^\infty(\Omega,\mu)$.  Let $(M_\omega)_{\omega \in \Omega}$ be a measurable field of separable tracial von Neumann algebras and write $(M,\tau) = \int_\Omega^{\oplus} (M_\omega,\tau_\omega)\,d\mu(\omega)$, and let $E: M \to L^\infty(\Omega,\mu)$ be the canonical expectation. Then there exists a sequence $(\omega_n)_{n \in \N}$ in $\Omega$ and a non-principal ultrafilter $\cV$ on $\N$ such that $M^{/E,\cU} \cong \prod_{n \to \cV} M_{\omega_n}$.
	\end{prop}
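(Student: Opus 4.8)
The plan is to obtain the isomorphism by choosing $(\omega_n)$ and $\cV$ so that the discrete ultraproduct is elementarily equivalent to the ultrafiber, and then invoking the uniqueness of countably saturated models under CH. Concretely, I would reduce the statement to three facts: (a) for a suitable sequence and ultrafilter, $\prod_{n\to\cV} M_{\omega_n} \equiv M^{/E,\cU}$; (b) both algebras are countably saturated; and (c) both have density character $\aleph_1 = \mathfrak c$ under CH. Granting these, the standard back-and-forth for saturated metric structures (a recursion of length $\omega_1$) produces an isomorphism. Throughout I may assume $M^{/E,\cU}$ is non-separable, hence infinite-dimensional; if it is finite-dimensional it is a finite direct sum of matrix algebras, and choosing $\omega_n$ with $M_{\omega_n}$ having that same (isolated) theory gives $\prod_{n\to\cV}M_{\omega_n}$ equal to it directly.

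For (a): by the {\L}o\'s theorem for ultrafibers (Theorem \ref{thm: continuous Los}) applied to sentences, $\Th(M^{/E,\cU}) = \cU\circ\ev_E^M =: T$, a point of $\mathbb{T}_{\tr}$. Writing $\Theta\colon\Omega\to\mathbb{T}_{\tr}$, $\Theta(\omega)=\Th(M_\omega,\tau_\omega)$ (measurable by Lemma \ref{direct-integral-case}), Lemma \ref{lem: direct integral distribution of theories} identifies the distribution of theories with the pushforward $\Theta_*\mu$. The key point I would establish is that $T\in\supp(\Theta_*\mu)$: if some open $O\ni T$ had $\Theta_*\mu(O)=0$, pick by Urysohn a continuous $h\in C(\mathbb{T}_{\tr};\R)=\mathcal{P}_\varnothing$ with $h(T)=1$ and $\supp h\subseteq O$; then $\ev_E^M(h)=h\circ\Theta=0$ in $L^\infty(\Omega,\mu)$, so $1=h(T)=\cU(\ev_E^M(h))=0$, a contradiction. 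Since $\mathbb{T}_{\tr}$ is metrizable (Lemma \ref{lem: separability}), each ball $B(T,1/m)$ then has positive measure, hence nonempty preimage; choosing $\omega_m$ there gives $\Th(M_{\omega_m})\to T$, so $\lim_{m\to\cV}\Th(M_{\omega_m})=T$ for every nonprincipal $\cV$. By Proposition \ref{prop: theories as characters}(3) this yields $\Th(\prod_{m\to\cV}M_{\omega_m})=T=\Th(M^{/E,\cU})$, i.e.\ elementary equivalence.

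For (b) and (c): countable saturation of the discrete ultraproduct over a nonprincipal ultrafilter on $\N$ is classical. For the ultrafiber I would first check that $\cU$ is countably incomplete in the non-separable case, i.e.\ there are projections $1=q_0\ge q_1\ge\cdots$ in $N$ with $\cU(q_k)=1$ and $\tau(q_k)\to 0$; this is obtained by repeatedly halving a set in $\cU$, the only obstruction being $\cU$ equal to point evaluation at an atom (which is exactly the excluded separable case). Granting countable incompleteness, countable saturation of $M^{/E,\cU}$ follows by adapting the classical ultraproduct argument: given a consistent type over countably many parameters, lift the parameters to $M$ (Proposition \ref{prop: ultrafiber-well-defined}), use {\L}o\'s (Theorem \ref{thm: continuous Los}) together with Lemma \ref{neighborhood-of-character} to find, for each finite subfamily, a projection $p\in N$ with $\cU(p)=1$ on which the conditions hold approximately, and then paste the approximate realizers along a projection-valued measure via Lemma \ref{formula-partition} and the incompleteness projections $q_k$ into a single realizer in the fiber. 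For density characters, $M^{/E,\cU}$ is a quotient of $M=\int_\Omega M_\omega$, whose density character is at most $\mathfrak c$, and the discrete ultraproduct of separable algebras over $\N$ also has density character at most $\mathfrak c$; under CH both are at most $\aleph_1$, and being countably saturated and infinite-dimensional forces them to equal $\aleph_1$.

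Finally, with (a)--(c) in hand, two elementarily equivalent, $\aleph_1$-saturated structures of density character $\aleph_1$ are isomorphic by the usual back-and-forth in continuous logic, proving $M^{/E,\cU}\cong\prod_{n\to\cV}M_{\omega_n}$. I expect the main obstacle to be (b): establishing countable saturation of the ultrafiber, since unlike the discrete case it is not standard and requires combining the countable incompleteness of $\cU$ with the pasting machinery of Section \ref{sec: Los}. A secondary subtlety is the degenerate case where $\cU$ is a point mass at an atom, where the ultrafiber is a single separable fiber and the statement must be interpreted (or excluded) accordingly.
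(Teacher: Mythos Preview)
Your proposal follows essentially the same strategy as the paper: produce a sequence $(\omega_n)$ with $\Th(M_{\omega_n})\to\Th(M^{/E,\cU})$, establish countable saturation on both sides, match density characters under CH, and invoke the back-and-forth (Proposition~\ref{saturation-iso}). Your argument for (a) via $T\in\supp(\Theta_*\mu)$ is a mild variant of the paper's closure argument, and your sketch of (b) is exactly what the paper carries out in Proposition~\ref{saturation-thm}. You also correctly flag the degenerate case of $\cU$ a point mass at an atom.

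There is, however, a genuine gap in your case split and in claim (c). Your dichotomy ``non-separable versus finite-dimensional'' omits the case where $M^{/E,\cU}$ is an \emph{infinite} direct sum of matrix algebras: such an algebra is separable and infinite-dimensional, and its operator-norm unit ball is $\|\cdot\|_2$-compact (the weights are summable), so it is countably saturated yet has density character $\aleph_0$. Thus your assertion that ``countably saturated and infinite-dimensional forces density character $\aleph_1$'' is false as stated; the correct hypothesis is ``has a diffuse direct summand.'' The paper avoids this by making the split \emph{atomic versus has a diffuse summand}: in the atomic case it cites \cite[Lemma 4.4]{FG24} to conclude that the atom sizes and matrix dimensions are determined by the theory, whence elementary equivalence already gives isomorphism; in the diffuse-summand case it proves density character exactly $2^{\aleph_0}$ by an explicit construction of continuum-many orthogonal elements (Proposition~\ref{prop: density character}) rather than deducing it from saturation. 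Once you replace ``infinite-dimensional'' by ``has a diffuse summand'' (noting that this is first-order, so it transfers to the discrete ultraproduct), your outline goes through.
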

	
	We remark that Proposition \ref{prop: isomorphism} in particular applies to generalized ultrapowers over separable commutative tracial von Neumann algebras as follows.  Suppose that $(M,\tau_M)$ is a tracial von Neumann algebra.  Let $A$ be a separable diffuse commutative tracial von Neumann algebra, and let $\cU$ is a character on $A$.  We can assume without loss of generality that $A = L^\infty(\Omega)$, and then $A \overline{\otimes} M$ is identified with $\int_\Omega^\omega (M,\tau_M) \,d\omega$, and the ultrafiber $M^{/E,\cU}$ is exactly the generalized ultrapower $M^{\cU}$ of Definition \ref{def: gen ultrapower}.  Hence, the proposition implies that $M^{\cU}$ is isomorphic to an ultrapower $M^{\mathcal{V}}$ with respect to a non-principal ultrafilter $\mathcal{V}$ on $\N$.
	
	The isomorphism in Theorem \ref{prop: isomorphism} arises from the following classic fact from model theory.  The notions of density character and saturation are explained below.
	
	\begin{prop}[{\cite[Proposition 4.13]{FHS2014a}}] \label{saturation-iso}
		If $(M, \tau_M)$ and $(N, \tau_N)$ are saturated tracial von Neumann algebras of the same uncountable density character, then $M \equiv N$ if and only if $M \cong N$.
	\end{prop}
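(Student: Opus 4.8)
The forward implication $M \cong N \implies M \equiv N$ is immediate, since a trace-preserving $*$-isomorphism preserves the interpretation of every formula. The content is the converse, which I would establish by the classical uniqueness-of-saturated-models argument adapted to the continuous setting, namely a transfinite back-and-forth. Write $\kappa$ for the common (uncountable) density character and fix $\norm{\cdot}_2$-dense subsets $\{a_\alpha : \alpha < \kappa\} \subseteq (M)_1$ and $\{b_\alpha : \alpha < \kappa\} \subseteq (N)_1$. Call a function $f$ from a subset $A \subseteq M$ into $N$ a \emph{partial elementary map} if $\varphi^M(\overline{x}) = \varphi^N(f(\overline{x}))$ for every formula $\varphi$ and every tuple $\overline{x}$ from $A$; since $\norm{x-y}_2$ is definable, any such $f$ is automatically $\norm{\cdot}_2$-isometric, and by comparing trace-polynomials one checks that $f$ is a trace-preserving $*$-homomorphism whenever $A$ is closed under the $*$-algebra operations. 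I will use saturation in the form: if $A$ has density character $<\kappa$ and $p$ is a complete type over $A$, then $p$ is realized (in $M$, and likewise in $N$).

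The plan is to build an increasing chain $(f_\alpha)_{\alpha < \kappa}$ of partial elementary maps $f_\alpha : A_\alpha \to N$, where each $A_\alpha$ is a $*$-subalgebra of density character $<\kappa$, starting from $f_0 = \varnothing$ (which is elementary precisely because $M \equiv N$) and taking unions at limit stages. At stage $\alpha+1$ I perform a \emph{forth} step then a \emph{back} step. For the forth step, to force $a_\alpha$ into the domain: let $p$ be the type of $a_\alpha$ over $A_\alpha$ in $M$, and let $p'$ be its pushforward over $f_\alpha(A_\alpha)$, obtained by replacing each parameter $c$ by $f_\alpha(c)$. Elementarity of $f_\alpha$ guarantees $p'$ is a consistent complete type over a parameter set of density character $<\kappa$, so by $\kappa$-saturation of $N$ it is realized by some $b \in N$; extending $f_\alpha$ by $a_\alpha \mapsto b$ and then closing the domain under the $*$-algebra operations keeps the map elementary and keeps the density character below $\kappa$ (the subalgebra generated by $<\kappa$ elements over $\Q[i]$ still has density character $<\kappa$). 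The back step is symmetric, using $\kappa$-saturation of $M$ to produce a preimage for $b_\alpha$. These two steps ensure $\{a_\alpha\}$ lands in the eventual domain and $\{b_\alpha\}$ in the eventual range.

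Finally I set $f = \bigcup_{\alpha<\kappa} f_\alpha$. Its domain is a $\norm{\cdot}_2$-dense $*$-subalgebra of $(M)_1$, its range a dense $*$-subalgebra of $(N)_1$, and $f$ is a trace-preserving, $\norm{\cdot}_2$-isometric $*$-homomorphism. Because the algebra operations are $\norm{\cdot}_2$-uniformly continuous on the unit ball and $(M)_1,(N)_1$ are $\norm{\cdot}_2$-complete, $f$ extends uniquely to a $\norm{\cdot}_2$-isometric $*$-homomorphism on $(M)_1$, trace-preserving since $\tau$ is $\norm{\cdot}_2$-continuous; rescaling yields a trace-preserving $*$-homomorphism $\bar f : M \to N$. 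Surjectivity holds because $\bar f((M)_1)$ is the isometric image of a complete set, hence $\norm{\cdot}_2$-closed, and it is dense, hence all of $(N)_1$. Thus $\bar f$ is an isomorphism. I expect the main obstacle to be the careful handling of types and saturation in the continuous setting: one must phrase $\kappa$-saturation in terms of the \emph{density character} of the parameter set (not its cardinality), verify that each $A_\alpha$ genuinely has density character $<\kappa$ throughout the transfinite recursion (which is where uncountability of $\kappa$ enters, via $|\alpha| + \aleph_0 < \kappa$ for $\alpha < \kappa$), and confirm that the pushforward type $p'$ is consistent and realized. The subalgebra-closure bookkeeping that makes the limit map a genuine $*$-homomorphism rather than merely a partial isometry is the other point requiring care.
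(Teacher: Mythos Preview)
The paper does not prove this proposition; it is quoted from \cite[Proposition 4.13]{FHS2014a} and used as a black box. Your argument is the standard transfinite back-and-forth that one finds in that reference (and in \cite[Proposition 7.14]{BYBHU08} for general metric structures), so there is nothing to compare against here, and your outline is correct.

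One small point: in the paper's Definition of $\kappa$-saturation the parameter set is bounded by \emph{cardinality} $|A| < \kappa$, not density character. Your remark that one ``must phrase $\kappa$-saturation in terms of the density character'' is therefore slightly off relative to this paper's conventions, but harmless: since formulas are uniformly continuous in their parameters, a type over $A$ is determined by its restriction to any $\norm{\cdot}_2$-dense subset of $A$, so the two formulations are equivalent for the purposes of the back-and-forth. Either way, the inductive bookkeeping you describe --- that the $*$-algebra generated at stage $\alpha$ has density character at most $\max(|\alpha|,\aleph_0) < \kappa$ --- goes through exactly because $\kappa$ is uncountable.
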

	
	Recall that the \emph{density character} of a metric space is the minimum cardinality of a dense subset, or equivalently the maximum cardinality of a collection of disjoint balls.  The density character of a tracial von Neumann algebra is understood with respect to the $L^2$-metric, or equivalently, it refers to the density character of the predual.  Thus, the first prerequisite for applying Proposition \ref{saturation-iso} is to determine the density character of the ultrafibers in question.  We will show the density character is $2^{\aleph_0}$ if the algebra has a diffuse part, and in fact, this is true even without assuming CH.
	
	\begin{prop} \label{prop: density character}
		Let $(\Omega,\mu)$ be a standard Borel probability space and let $\cU$ be a character on $L^\infty(\Omega,\mu)$.  Assume that $\cU$ is not given by a point mass located at some atom of $\Omega$.  Let $(M_\omega)_{\omega \in \Omega}$ be a measurable field of separable tracial von Neumann algebras and write $(M,\tau) = \int_\Omega^{\oplus} (M_\omega,\tau_\omega)\,d\mu(\omega)$, and let $E: M \to L^\infty(\Omega,\mu)$ be the canonical expectation.
		\begin{enumerate}
			\item If $M^{/E,\cU}$ has a diffuse summand, then its density character is $2^{\aleph_0}$.
			\item Otherwise, $M^{/E,\cU}$ is a direct sum of matrix algebras, and hence is separable.
		\end{enumerate}
	\end{prop}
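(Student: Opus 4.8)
The plan is to treat the statement as a genuine dichotomy, handling the (uniform) upper bound first. For the \textbf{upper bound}, valid in both cases, note that $M = \int_\Omega^\oplus M_\omega\,d\mu$ is $\norm{\cdot}_2$-separable (its $L^2$-space is a direct integral of separable Hilbert spaces over a standard Borel space, hence separable), so $|M| \le 2^{\aleph_0}$ and therefore $|M^{/E,\cU}| \le 2^{\aleph_0}$, which bounds the density character by $2^{\aleph_0}$. For \textbf{part (2)}, suppose $M^{/E,\cU}$ has no diffuse summand. By the structure theory of finite von Neumann algebras we may write $M^{/E,\cU} = P_{\mathrm{diff}} \oplus P_{\mathrm{at}}$ with $P_{\mathrm{diff}}$ having no minimal projections and $P_{\mathrm{at}}$ atomic; the absence of a diffuse summand forces $P_{\mathrm{diff}} = 0$, so $M^{/E,\cU}$ is atomic, i.e.\ $\bigoplus_i M_{n_i}(\C)$. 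Since $\tau_{E,\cU}$ is a faithful tracial state, the traces of the minimal central projections are strictly positive and sum to $1$, so there are only countably many summands and $M^{/E,\cU}$ is separable.

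All remaining work is \textbf{part (1)}. Assume $M^{/E,\cU}$ has a diffuse summand, cut off by a central projection $Z = \pi_{E,\cU}(z)$ with $z \in Z(M)$ a projection (available since $Z(M^{/E,\cU}) = \pi_{E,\cU}(Z(M))$ by Proposition \ref{prop: center of ultrafiber}). Working inside this corner, I first extract a $\cU$-robust fiber invariant witnessing diffuseness. As the corner is diffuse, for every $N$ it contains $N$ orthogonal projections of equal trace summing to $Z$; lifting these to sections and cleaning up via functional calculus shows that on a $\cU$-large set of $\omega$ the fiber corner $z(\omega) M_\omega z(\omega)$ admits $N$ near-equal orthogonal projections (equivalently, by Theorem \ref{thm: continuous Los} together with the fiberwise formula of Lemma \ref{direct-integral-case}, the ``$N$-splitting'' formulas vanish in the corner and hence vanish $\cU$-almost everywhere on the fibers). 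Combining a fixed fraction of these pieces into trace-$\approx \tfrac12\tau_\omega(z(\omega))$ projections, and using that one can choose exponentially many $\tfrac N2$-subsets of $\{1,\dots,N\}$ with pairwise symmetric difference $\ge N/3$, I obtain a fixed $\delta > 0$ such that the capacity $K_\delta(\omega)$ (the maximal number of trace-$\approx \tfrac12\tau_\omega(z(\omega))$ projections in $M_\omega$ that are pairwise $\ge \delta$ apart in $\norm{\cdot}_{2,\tau_\omega}$) satisfies $\lim_\cU K_\delta(\omega) = \infty$.

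I would then build $2^{\aleph_0}$ uniformly separated elements by a tree-coding along $\cU$. Using the hypothesis that $\cU$ is not a point mass at an atom, I first produce a decreasing sequence $C_1 \supseteq C_2 \supseteq \cdots$ with $\cU(\mathbbm{1}_{C_n}) = 1$ and $\mu(C_n) \to 0$ (splitting the diffuse part repeatedly, together with non-principality on the countable set of atoms), so that the depth $d(\omega) = \sup\{n : \omega \in C_n\}$ is finite almost everywhere yet $\to \infty$ along $\cU$. Setting $m(\omega) = \min(d(\omega), \lfloor \log_2 K_\delta(\omega) \rfloor)$ gives a measurable, a.e.\ finite function with $\lim_\cU m(\omega) = \infty$ for which each fiber carries at least $2^{m(\omega)}$ pairwise $\delta$-separated projections; a measurable selection (Jankov--von Neumann) yields such a family $\{q^\omega_s : s \in \{0,1\}^{m(\omega)}\}$. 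For $x \in \{0,1\}^{\N}$ define the section $g_x(\omega) = q^\omega_{x \restriction m(\omega)}$. If $x \ne x'$ first differ at bit $k$, then on $\{m \ge k\} \in \cU$ the truncations $x \restriction m(\omega)$ and $x' \restriction m(\omega)$ differ, so $\norm{g_x(\omega) - g_{x'}(\omega)}_{2,\tau_\omega}^2 \ge \delta$ there; since $\norm{\pi_{E,\cU}(g_x) - \pi_{E,\cU}(g_{x'})}_{2,\tau_{E,\cU}}^2 = \cU(\omega \mapsto \norm{g_x(\omega) - g_{x'}(\omega)}_{2,\tau_\omega}^2) \ge \delta\,\cU(\mathbbm{1}_{\{m \ge k\}}) = \delta$, the family $\{\pi_{E,\cU}(g_x)\}_{x \in \{0,1\}^{\N}}$ consists of $2^{\aleph_0}$ elements pairwise at distance $\ge \sqrt{\delta}$. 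With the upper bound, the density character is exactly $2^{\aleph_0}$.

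The \textbf{main obstacle} is the step converting diffuseness of the ultrafiber into the quantitative statement $\lim_\cU K_\delta(\omega) = \infty$ for a \emph{single} $\delta$: this requires both the transfer from the ultrafiber to the fibers (via Łoś/lifting, plus a functional-calculus cleanup replacing approximate partitions by genuine projections on a $\cU$-large set) and the measurable selection of the separated projection families, so that the sections $g_x$ are genuinely measurable. The hypothesis that $\cU$ is not a point mass at an atom enters precisely to produce the sets $C_n \in \cU$ with $\mu(C_n) \to 0$, i.e.\ to make the depth function finite yet $\cU$-unbounded.
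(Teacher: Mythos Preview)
Your upper bound and part (2) match the paper's argument. For the lower bound in (1) your route is correct but genuinely different and heavier: you transfer diffuseness to the fibers via {\L}o{\'s}, extract a uniform packing number $K_\delta(\omega) \to_\cU \infty$ by a symmetric-difference combinatorial estimate, and then invoke Jankov--von Neumann measurable selection to produce tree-indexed projection families in each fiber. This can be made to work, but the two obstacles you flag---cleaning up approximate partitions into genuine separated projections on a $\cU$-large set, and making the fiberwise selection measurable---absorb essentially all the effort.

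The paper sidesteps both by lifting \emph{once} rather than fiberwise. Since the diffuse corner contains a copy of $L^\infty(\{-1,1\}^{\N})$, the paper simply lifts the countably many Rademacher coordinates $v_k$ to sections $v_k(\omega)$ (only surjectivity of $\pi_{E,\cU}$ on the unit ball is needed; no selection theorem), and for each $S \subseteq \N$ sets $v_S(\omega) = \prod_{i \in S,\, i \le n} v_i(\omega)$ on the shell $C_n \setminus C_{n+1}$, using exactly the same countable-incompleteness sets $C_n$ you construct. Orthogonality of the Walsh characters $\prod_{i \in S} v_i$ in the ultrafiber then forces $\tau_{E,\cU}(v_S^* v_{S'}) = 0$ for $S \ne S'$ directly from the moment conditions defining the $C_n$'s, yielding $2^{\aleph_0}$ mutually orthogonal nonzero elements with no {\L}o{\'s} transfer, no functional-calculus cleanup, and no measurable selection. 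Your argument buys nothing additional here; the paper's encoding via products of a single lifted countable sequence is strictly simpler.
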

	
	For the proof, we need one more property of the ultrafilters or characters on $L^\infty(\Omega)$, namely countable incompleteness.  Recall that an ultrafilter $\mathcal{U}$ on an index set $I$ is called \textit{countably incomplete} if there exists a decreasing sequence of sets $\{A_n\}_{n = 1}^\infty \subseteq \mathcal{U}$ such that $\bigcap_n A_n = \varnothing$. For instance, all non-principal ultrafilters on $\mathbb{N}$ are countably incomplete since we can take $A_n = \{n,n+1,\dots\}$.  If we view the ultrafilter as a pure state on $\ell^\infty(I)$, then countable incompleteness means that there is a decreasing sequence of projections $p_n$ such that $\cU(p_n) = 1$ and $\bigwedge_{n \in \N} p_n = 0$.  Thus, we adapt the definition to characters on commutative von Neumann algebras as follows.
	
	\begin{defn}
		Let $\mathcal{U}$ be a character on a commutative von Neumann algebra $N$. We say $\mathcal{U}$ is \textit{countably incomplete} if there exists a sequence of decreasing projections $\{p_n\}_{n = 1}^\infty \subseteq N$ s.t. $p_n \searrow 0$ and $\mathcal{U}(p_n) = 1$ for all $n$.
	\end{defn}
	
	\begin{prop}
		Let $N$ be a commutative tracial von Neumann algebra and $\mathcal{U}$ a character on $N$. Then $\mathcal{U}$ is countably incomplete unless there is a minimal projection $p$ such that $px = \cU(x) p$ for all $x \in N$.
	\end{prop}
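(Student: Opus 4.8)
The plan is to prove an explicit dichotomy, reducing to the contrapositive. The first step is the easy observation that for a \emph{minimal} projection $q$ of the commutative algebra $N$ one has $qN = qNq = \C q$, so every $x \in N$ satisfies $qx = \lambda q$ for some scalar $\lambda$; applying the character gives $\cU(q)\cU(x) = \lambda \cU(q)$. Hence as soon as $\cU(q) = 1$ we read off $\lambda = \cU(x)$, that is $qx = \cU(x)q$ for all $x$, so $q$ is exactly the exceptional projection in the statement. Conversely, if $p$ is any minimal projection with $px = \cU(x)p$, then taking $x = p$ forces $\cU(p) = 1$. Thus the exceptional alternative holds precisely when $\cU$ fails to vanish on all minimal projections, and it remains to show that if $\cU(q) = 0$ for every minimal projection $q$, then $\cU$ is countably incomplete.

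Assume $\cU$ kills every minimal projection. I would decompose $1 = q_{\mathrm{at}} + q_{\mathrm{diff}}$, where $q_{\mathrm{at}} = \bigvee_i q_i$ is the join of the minimal projections $q_i$ (at most countably many, being pairwise orthogonal of positive trace), and $q_{\mathrm{diff}}$ is the diffuse part, which has no minimal subprojection. Since $\cU$ is a character, it sends each projection to $\{0,1\}$ and $\cU(q_{\mathrm{at}}) + \cU(q_{\mathrm{diff}}) = 1$, so exactly one of these two central projections has $\cU$-value $1$; I would treat the two cases separately.

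If $\cU(q_{\mathrm{diff}}) = 1$, I would build the witnessing sequence by repeated halving. The key auxiliary fact, which I would establish by a Zorn's-lemma argument using normality of $\tau$ on increasing nets together with the observation that within the diffuse part every nonzero projection admits subprojections of arbitrarily small positive trace, is that any projection in $Nq_{\mathrm{diff}}$ can be split into two subprojections of equal trace. Starting from $p_0 = q_{\mathrm{diff}}$ and given $p_n$ with $\cU(p_n) = 1$, split $p_n = a + b$ with $\tau(a) = \tau(b) = \tau(p_n)/2$; since $\cU(a) + \cU(b) = 1$, exactly one of them, call it $p_{n+1}$, satisfies $\cU(p_{n+1}) = 1$, and it has trace $\tau(p_n)/2$. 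Then $(p_n)$ is decreasing with $\cU(p_n) = 1$ and $\tau(p_n) = \tau(q_{\mathrm{diff}})2^{-n} \to 0$, so $\tau(\bigwedge_n p_n) = 0$ and hence $\bigwedge_n p_n = 0$ by faithfulness of $\tau$, which is exactly countable incompleteness.

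If instead $\cU(q_{\mathrm{at}}) = 1$, the construction is purely combinatorial: enumerate the minimal projections as $q_1, q_2, \dots$ and set $p_n = q_{\mathrm{at}} - \sum_{i=1}^n q_i$. These are decreasing, and since $\cU(q_i) = 0$ for each $i$ we get $\cU(p_n) = \cU(q_{\mathrm{at}}) = 1$; moreover $\tau(p_n) = \sum_{i > n}\tau(q_i) \to 0$ because $\sum_i \tau(q_i) = \tau(q_{\mathrm{at}}) < \infty$, so again $\bigwedge_n p_n = 0$ and $\cU$ is countably incomplete. I expect the only genuinely delicate point to be the halving fact in the diffuse case: since $N$ may be non-separable one cannot simply invoke the intermediate value property for sets in a standard atomless probability space, and it is the abstract Zorn/normality argument that makes the halving go through in full generality.
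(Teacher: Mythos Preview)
Your proof is correct and shares the paper's overall architecture---split into atomic and diffuse parts, treat each separately, and in both cases exhibit a decreasing sequence of projections with $\cU$-value $1$ whose traces tend to zero---but in the diffuse case you take a genuinely different route. The paper invokes Maharam's theorem to realize the diffuse part concretely as $L^\infty([0,1]^\kappa,\lambda^{\otimes\kappa})$, observes that $\cU$ restricts to evaluation at a point $t$ on $C([0,1]^\kappa)$, and then takes the $p_n$ to be the indicator functions of shrinking cylinder sets around $t(0)$ in the first coordinate. Your halving argument is more elementary: it avoids Maharam entirely, needing only the abstract fact (which you correctly isolate and whose Zorn/normality proof you sketch) that in a diffuse tracial abelian von Neumann algebra every projection splits into two subprojections of equal trace. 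What your approach buys is self-containment and independence from any classification theorem; what the paper's approach buys is a concrete picture of where the $p_n$ live. Both arrive at the same destination via $\tau(p_n)\to 0$ and faithfulness. In the atomic case your tail construction is essentially identical to the paper's.
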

	
	\begin{proof}
		First, consider the case where $N$ is diffuse.  By Maharam's theorem \cite[Theorem 1]{Mah42}, we may assume $(N, \tau_N) = L^\infty([0, 1]^\kappa, \lambda^{\otimes \kappa})$ for some infinite cardinal $\kappa$, where $\lambda$ is the Lebesgue measure. Thus, $A = C([0, 1]^\kappa) \subseteq N$, so $\mathcal{U}$ necessarily restricts to a character on $A$, which must be of the form of the evaluation functional at some $t \in [0, 1]^\kappa$. For each positive integer $n$, let $f_n: [0, 1] \to [0, 1]$ be a continuous function s.t. $f_n = 1$ on $[t(0) - 1/(2n), t(0) + 1/(2n)] \cap [0, 1]$ and $f_n = 0$ outside $[t(0) - 1/n, t(0) + 1/n] \cap [0, 1]$. Let $p: [0, 1]^\kappa \to [0, 1]$ be the projection map onto the zeroth coordinate. Then $g_n = f_n \circ p \in A$. Furthermore, by definition $\mathcal{U}(g_n) = g_n(t) = 1$. Let $p_n \in N$ be the projection corresponding to the set $([t(0) - 1/n, t(0) + 1/n] \cap [0, 1]) \times [0, 1]^{\kappa \setminus \{0\}}$. Then $p_n \geq g_n$, so $\mathcal{U}(p_n) = 1$. We also have $\tau_N(p_n) \leq 2/n \to 0$, so $p_n \searrow 0$.
		
		Now consider the general case.  Choose a family $(p_i)_{i \in I}$ of mutually orthogonal minimal projections in $N$ that is maximal with respect to inclusion.  Since $N$ has a faithful state, $I$ must be countable.  Let $p = \sum_{i \in I} p_i$.  Then $N$ is the direct sum of $pN$ and $(1-p)N$, and so the character $\cU$ must be supported on one of these two summands.  If it is supported on $(1-p)N$, then we can apply the diffuse case.  If it is supported on $pN$, then the argument is the same as for $\ell^\infty(\N)$.
	\end{proof}
	
	\begin{proof}[Proof of Proposition \ref{prop: density character}]
		(1) Suppose that $p$ is a central projection such that $p M^{/E,\cU}$ is diffuse, and hence it contains a copy of $L^\infty(\{-1,1\}^{\N})$ where $\{-1,1\}^{\N}$ is the probability space equipped with the infinite tensor product of the uniform measure $\frac{1}{2} (\delta_{-1} + \delta_1)$.  Let $v_k: \{-1,1\}^{\N} \to \{-1,1\}$ be the $k$th coordinate projection, viewed as an element of $pM^{/E,\cU}$.  Note that $v_k$ is self-adjoint with $v_k^2 = p$, and the $v_k$'s commute.  Since $M^{/E,\cU}$ is a quotient of $M$, write
		\[
		v_k = \pi_{E,\cU} \left[ \int_{\Omega}^{\oplus} v_k(\omega)\,d\mu(\omega) \right],
		\]
		where $v_k(\omega) \in (M_\omega)_1$ is self-adjoint.  For $n \in \N$, let $A_n \subseteq \Omega$ be the set of $\omega$ such that
		\[
		\left| \tau_\omega\left[ \prod_{j=1}^m v_{i(j)}(\omega) \right] - \tau_{E,\cU}\left[ \prod_{j=1}^m v_{i(j)} \right] \right| \leq \frac{1}{n} \text{ for } m \leq 2n \text{ and } i(1), \dots, i(m) \in \{1,\dots,n\}.
		\]
		Note that because $\cU$ evaluated on $\omega \mapsto \tau_\omega\left[ \prod_{j=1}^m v_{i(j)}(\omega) \right]$ is equal to $\tau_{E,\cU}\left[ \prod_{j=1}^m v_{i(j)} \right]$, we must have that
		\[
		\cU(\mathbbm{1}_{A_n}) = 1.
		\]
		Also, $A_n$ is a decreasing sequence.  Moreover, since $\cU$ is countably incomplete, choose a decreasing collection of subsets $B_n$ such that $\cU(\mathbbm{1}_{B_n}) = 1$ and $\mu(B_n) \to 0$.  Let $C_n = A_n \cap B_n$, so that $\cU(\mathbbm{1}_{C_n}) = 1$ and $\mu(C_n) \to 0$. 
		For each $S \subseteq \N$, define
		\[
		v_S(\omega) = \sum_{n \in \N} \mathbbm{1}_{C_n \setminus C_{n+1}}(\omega) \prod_{i \in S \cap \{1,\dots,n\}} v_i(\omega),
		\]
		where the product runs from left to right when the indices are written in increasing order.  Suppose the sets $S$ and $S'$ are not equal, and say they differ at some integer $m$.   Fix some $n_0 \geq m$.  Then for $n \geq n_0$, we have
		\[
		\tau_{E,\cU}\left[ \left( \prod_{i \in S \cap \{1,\dots,n\}} v_i \right)^* \prod_{i \in S' \cap \{1,\dots,n\}} v_i \right] = 0,
		\]
		and hence by definition of $A_n$, we have for $\omega \in C_n \setminus C_{n+1}$ that
		\[
		|\tau_\omega(v_{S}(\omega)^* v_{S'}(\omega))| = \left|\tau_\omega\left[ \left( \prod_{i \in S \cap \{1,\dots,n\}} v_i(\omega)\right)^* \prod_{i \in S' \cap \{1,\dots,n\}} v_i(\omega) \right] \right| \leq \frac{1}{n} \leq \frac{1}{n_0}.
		\]
		This holds on $\bigcup_{n \geq n_0} (C_n \setminus C_{n+1})$, which equals $C_{n_0}$ up to null sets.  Since $\cU(\mathbbm{1}_{C_{n_0}}) = 1$, we have that
		\[
		|\tau_{E,\cU}(v_S^* v_{S'})| \leq \frac{1}{n_0}.
		\]
		Since $n_0$ was arbitrary, $\tau_{E,\cU}(v_S^* v_{S'}) = 0$.  Similar reasoning shows that $\tau_{E,\cU}(v_S^*v_S) = \tau_{E,\cU}(p)$.  Therefore, $(v_S)_{S \subseteq \N}$ gives an orthogonal set of nonzero elements in $pM$ of cardinality equal to $2^{\aleph_0}$, so the density character is at least $2^{\aleph_0}$.
		
		On the other hand, since $M$ is separable, its cardinality is at most $2^{\aleph_0}$.  And $M^{/E,\cU}$ is the image of $M$ under the quotient map, and so its cardinality is at most $2^{\aleph_0}$.  Hence, the density character is also at most $2^{\aleph_0}$.
		
		(2) Suppose $M^{/E,\cU}$ has no diffuse summand.  Choose a family $(p_i)_{i \in I}$ of mutually orthogonal minimal projections in $Z(M^{/E,\cU})$ that is maximal with respect to inclusion.  This collection must be countable since $M^{/E,\cU}$ has a faithful trace.  We must have $\sum_{i \in I} p_i = 1$ or else we could construct a minimal projection under the complement and contradict maximality.  Note that $p_i M^{/E,\cU}$ must be a factor or else it would contain a nontrivial central projection and contradict minimality of $p_i$.  Thus, $p_i M^{E,\cU}$ must be a tracial factor that admits a minimal projection, and so is isomorphic to a matrix algebra.
	\end{proof}
	
	Now that we have computed the density character of the generalized ultraproducts, we move on to saturation.  Roughly speaking, saturation means that if some conditions have approximate solutions, then they have a solution; saturation is thus somewhat similar to compactness (see \cite[Proposition 2.28]{JekelDefinable}).  The number of constants that can appear in the conditions is a specified cardinality $\kappa$.  The definition is as follows; see \cite[Definition 7.5]{BYBHU08}, \cite[\S 4.4]{FHS2014a}.
	
	\begin{defn}
		Let $M$ be a tracial von Neumann algebra.  Let $A \subseteq (M)_1$ and let $\Gamma$ be a collection of formulas whose free variables lie within $\{x_1, \cdots, x_n\} \cup \{y_a\}_{a \in A}$.  with free variables $x_1,\dots,x_n$ and $(y_i)_{i \in I}$.
		\begin{enumerate}
			\item The collection $\Gamma$ is \emph{satisfiable} in $M$ if there exist $x_1$, \dots, $x_n \in (M)_1$, such that
			\[
			\varphi^M(x_1, \cdots, x_n, \{a\}_{a \in A}) = 0 \text{ for } \varphi \in \Gamma.
			\]
			\item The collection $\Gamma$ is \emph{finitely approximately satisfiable} if, for every finite subset $\Gamma_0 \subseteq \Gamma$ and every $\epsilon > 0$, there exists $\{x_1, \cdots, x_n\} \subseteq (M)_1$, such that
			\[
			|\varphi^M(x_1, \cdots, x_n, \{a\}_{a \in A})| < \epsilon \text{ for } \varphi \in \Gamma_0.
			\]
			\item Let $\kappa$ be an infinite cardinal.  We say that $M$ is \emph{$\kappa$-saturated} if for every $A \subseteq (M)_1$ with $|A| < \kappa$ and every collection $\Gamma$ as above, if $\Gamma$ is finitely approximately satisfiable, then $\Gamma$ is satisfiable.
			\item We call $M$ \emph{countably saturated} if it is $\aleph_1$-saturated, meaning that the above condition holds whenever $|A| < \aleph_1$, i.e., when $A$ is countable.
			\item We call $M$ \emph{saturated} if it is $\kappa$-saturated where $\kappa$ is the density character of $M$.
		\end{enumerate}
	\end{defn}
	
	Hence, in Proposition \ref{saturation-iso}, the requirement is for $M$ and $N$ to be saturated with the cardinal specified by their density character.  We showed in Proposition \ref{prop: density character} that the density character of ultrafibers arising from a direct integral is $2^{\aleph_0}$ in the non-atomic case.  Under CH, this equals $\aleph_1$, so in this case \emph{countable} saturation is enough to obtain saturation.  It is well known that ultraproducts with respect to countably incomplete ultrafilters are countably saturated \cite[Proposition 7.6]{BYBHU08}, \cite[Proposition 4.11]{FHS2014a}.  Our next goal is to show the same conclusion for ultrafibers over continuous index sets by adapting the classic diagonalization argument.
	
	\begin{prop}\label{saturation-thm}
		Let $M$ be a finite von Neumann algebra, $N \subseteq Z(M)$ be a subalgebra of its center, $E: M \to N$ be a normal, tracial, faithful conditional expectation, and $\mathcal{U}$ be a countably incomplete character on $N$. Then $M^{/E, \mathcal{U}}$ is countably saturated.
	\end{prop}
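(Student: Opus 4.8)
The plan is to adapt the classical diagonalization proof of countable saturation of ultraproducts (as in \cite[Proposition 7.6]{BYBHU08}), using the PVM-pasting Lemma \ref{formula-partition} in place of gluing along sets of an index set, and using the countable incompleteness of $\mathcal{U}$ to manufacture a vanishing tail. First I would reduce to a \emph{countable} type. Given a countable parameter set $A \subseteq (M^{/E,\mathcal{U}})_1$ and a collection $\Gamma$ of conditions $\varphi = 0$ (with free variables $x_1,\dots,x_n$ and parameters from $A$) that is finitely approximately satisfiable, note that $\Gamma$ lives in the separable space $\mathcal{P}_X$ of definable predicates in the countably many variables $\{x_1,\dots,x_n\}\cup\{y_a:a\in A\}$ (Lemma \ref{lem: separability}). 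Choosing a $\norm{\cdot}_u$-dense countable subcollection $\Gamma'\subseteq\Gamma$, one checks $\Gamma'$ is again finitely approximately satisfiable, and by the uniform bound $|\chi^{M^{/E,\mathcal{U}}}(\cdot)|\le\norm{\chi}_u$ any common zero in $M^{/E,\mathcal{U}}$ of $\Gamma'$ is a common zero of $\Gamma$; so we may assume $\Gamma=\{\varphi_k\}_{k\in\N}$. Using Proposition \ref{prop: ultrafiber-well-defined}, I would also fix lifts $\mathbbm{a}\in(M)_1$ of the parameters under $\pi := \pi_{E,\mathcal{U}}$.

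Next I would use finite approximate satisfiability together with our {\L}o{\'s} theorem to produce, for each $m$, an approximate solution and convert it into an $N$-level estimate. For each $m$, finite approximate satisfiability applied to $\{\varphi_1,\dots,\varphi_m\}$ and $\epsilon = 1/m$ yields $\hat{x}^{(m)}=(\hat{x}_1^{(m)},\dots,\hat{x}_n^{(m)})\in (M^{/E,\mathcal{U}})_1^n$ with $|\varphi_k^{M^{/E,\mathcal{U}}}(\hat{x}^{(m)},\overline{a})|<1/m$ for $k\le m$. Lift to $x^{(m)}\in(M)_1^n$ and set $\psi_k^{(m)} := (\varphi_k)_E^M(x^{(m)},\overline{\mathbbm{a}})\in N_{\sa}$. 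Theorem \ref{thm: continuous Los} gives $\mathcal{U}(\psi_k^{(m)})=\varphi_k^{M^{/E,\mathcal{U}}}(\hat{x}^{(m)},\overline{a})$, so $|\mathcal{U}(\psi_k^{(m)})|<1/m$ for $k\le m$. Lemma \ref{neighborhood-of-character} then produces, for each such $k$, a projection with $\mathcal{U}$-value $1$ on which $\psi_k^{(m)}$ is within $1/m$ of its (small) $\mathcal{U}$-value; intersecting these finitely many projections over $k\le m$ gives a single projection $q_m\in N$ with $\mathcal{U}(q_m)=1$ and $\norm{\psi_k^{(m)}q_m}_\infty<2/m$ for all $k\le m$.

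The heart of the argument is the diagonalization. Using countable incompleteness, fix projections $p_m\searrow 0$ with $\mathcal{U}(p_m)=1$, and set $t_m := \bigwedge_{j=1}^m (p_j\wedge q_j)$. These are decreasing with $\mathcal{U}(t_m)=1$ and $t_m\le p_m$, hence $t_m\searrow 0$; also $t_m\le q_m$. Form the PVM $e_0 := 1-t_1$ and $e_m := t_m-t_{m+1}$ for $m\ge 1$, which sums to $1$ since $t_m\searrow 0$, and define the single element $x_i := \sum_{m\ge 0} e_m\, x_i^{(m)}\in(M)_1$ (with $x^{(0)}$ arbitrary). By Lemma \ref{formula-partition} applied to this PVM, the parameters being held constant across the pieces, $(\varphi_k)_E^M(x,\overline{\mathbbm{a}}) = \sum_{m\ge 0} e_m\,\psi_k^{(m)}$. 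Now fix $k$. For $m\ge k$ we have $e_m\le t_m\le q_m$, so $\norm{e_m\psi_k^{(m)}}_\infty<2/m$; hence for any $M\ge k$, since $t_M=\sum_{m\ge M}e_m$ is a sum of mutually orthogonal central pieces, $\norm{t_M\,(\varphi_k)_E^M(x,\overline{\mathbbm{a}})}_\infty = \sup_{m\ge M}\norm{e_m\psi_k^{(m)}}_\infty\le 2/M$. As $\mathcal{U}(t_M)=1$ and $\mathcal{U}$ is multiplicative, $|\mathcal{U}((\varphi_k)_E^M(x,\overline{\mathbbm{a}}))| = |\mathcal{U}(t_M\,(\varphi_k)_E^M(x,\overline{\mathbbm{a}}))|\le 2/M$, and letting $M\to\infty$ gives $\mathcal{U}((\varphi_k)_E^M(x,\overline{\mathbbm{a}}))=0$. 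By Theorem \ref{thm: continuous Los} again, $\varphi_k^{M^{/E,\mathcal{U}}}(\pi(x),\overline{a})=0$ for every $k$, so $\pi(x)$ witnesses satisfiability of $\Gamma$. The main obstacle is organizing the two families of projections---the $q_m$ coming from the approximate solutions and the incompleteness projections $p_m$---into one decreasing sequence $t_m\searrow 0$, so that the pasted element is simultaneously controlled by $\psi_k^{(m)}$ on each region while the tail is annihilated in the limit; the estimate $\norm{t_M (\varphi_k)_E^M(x,\overline{\mathbbm{a}})}_\infty\le 2/M$ with $\mathcal{U}(t_M)=1$ is exactly what replaces the ``$A_n\in\mathcal{U}$ with $\bigcap_n A_n=\varnothing$'' bookkeeping of the discrete case.
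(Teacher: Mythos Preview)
Your proof is correct and follows essentially the same diagonalization argument as the paper: lift approximate solutions, use Lemma \ref{neighborhood-of-character} to localize the $N$-valued interpretations, intersect with the incompleteness projections to get a PVM, and paste via Lemma \ref{formula-partition}. The only cosmetic difference is in the reduction to countable $\Gamma$: you take a countable $\norm{\cdot}_u$-dense subset of $\Gamma$ directly (using separability of $\mathcal{P}_X$), whereas the paper approximates each $\varphi_i$ by elements of a fixed countable dense set $\Lambda$ and passes to auxiliary formulas $\psi_{i,m}=\max(0,|\lambda_{i,m}|-2/m)$; both reductions are routine and yield the same conclusion.
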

	
	\begin{proof}
		Let $A \subset (M^{/E, \mathcal{U}})_1$ be a countable set. For each $a \in A$, fix a lift $\mathbbm{a} \in (M)_1$ of $a$. Let $\Gamma$ be a collection of formulae whose free variables lie within $\{x_1, \cdots, x_n\} \cup \{y_a\}_{a \in A}$, and suppose that $\Gamma$ is finitely approximately satisfiable in $M^{/E,\cU}$.  First, we consider the case where $\Gamma$ is countable, so write $\Gamma = \{\varphi_k: k \in \N\}$.
		
		For each $m \in \N$, since $\Gamma$ is finitely satisfiable, fix $x_1^{(m)},\dots,x_n^{(m)} \in (M^{/E, \mathcal{U}})_1$ such that
		\[
		|\varphi_k(x_1^{(m)},\dots,x_n^{(m)},(a)_{a \in A})| < \frac{1}{m} \text{ for } k = 1, \dots, m.
		\]
		Let $\mathbbm{x}_j^{(m)} \in (M)_1$ such that $\pi_{E,\cU}(\mathbbm{x}_j^{(m)}) = x_j^{(m)}$.  By Theorem \ref{thm: continuous Los}, we have
		\[
		|\mathcal{U}((\varphi_k)_E^M(\mathbbm{x}_1^{(m)}, \cdots, \mathbbm{x}_n^{(m)}, \{\mathbbm{a}\}_{a \in A}))| < \frac{1}{m} \text{ for } k = 1, \dots, m.
		\]
		By Lemma \ref{neighborhood-of-character}, there exists a projection $p_m \in N$ s.t. $\mathcal{U}(p_m) = 1$ and
		\[
		\|p_m(\varphi_k)_E^M(\mathbbm{x}_1^{(m)}, \cdots, \mathbbm{x}_n^{(m)}, \{\mathbbm{a}\}_{a \in A})\|_\infty < \frac{1}{m} \text{ for } k = 1, \dots, m.
		\]
		As $\mathcal{U}$ is countably incomplete, fix a sequence of projections $q_n \searrow 0$ with $\mathcal{U}(q_n) = 1$.  Then let $e_m = (\bigwedge_{k = 1}^m p_k) \wedge q_m$, so that $e_m \searrow 0$ and $e_m$ satisfies the same condition as $p_m$ above.
		
		Let $e_0' = 1 - e_1$ and $e_m' = e_m - e_{m+1}$ for $m \geq 1$.  Then $(e_m')_{m=0}^\infty$ is a PVM over $N$.  For $i = 1, \dots, n$, define
		\[
		\mathbbm{x}_i = \sum_{m=0}^\infty e_m' \mathbbm{x}_i^{(m)},  \qquad x_i = \pi_{E,\cU}(\mathbbm{x}_i).
		\]
		Given $m_0 \in \N$, we have that $m \geq m_0$,
		\[
		\|e_m(\varphi_k)_E^M(\mathbbm{x}_1^{(m)}, \cdots, \mathbbm{x}_n^{(m)}, \{\mathbbm{a}\}_{a \in A})\|_\infty < \frac{1}{m} \text{ for } k = 1, \dots, m,
		\]
		hence
		\[
		\|e_m'(\varphi_k)_E^M(\mathbbm{x}_1^{(m)}, \cdots, \mathbbm{x}_n^{(m)}, \{\mathbbm{a}\}_{a \in A})\|_\infty < \frac{1}{m_0} \text{ for } k = 1, \dots, m_0.
		\]
		Therefore,
		\[
		\norm*{ e_{m_0} \sum_{m=0}^\infty e_m'(\varphi_k)_E^M(\mathbbm{x}_1^{(m)}, \cdots, \mathbbm{x}_n^{(m)}, \{\mathbbm{a}\}_{a \in A}) }_\infty \leq \frac{1}{m_0} \text{ for } k = 1, \dots, m_0.
		\]
		By Lemma \ref{formula-partition}, this means that
		\[
		\norm*{ e_{m_0} (\varphi_k)_E^M(\mathbbm{x}_1, \cdots, \mathbbm{x}_n, \{\mathbbm{a}\}_{a \in A}) }_\infty \leq \frac{1}{m_0} \text{ for } k = 1, \dots, m_0.
		\]
		Since $\cU(e_{m_0}) = 1$, we therefore have by Theorem \ref{thm: continuous Los} that
		\[
		|\varphi_k^{M^{/E,\cU}}(x_1,\dots,x_m,(a)_{a \in A})| = \left| \cU \left[ (\varphi_k)_E^M(\mathbbm{x}_1, \cdots, \mathbbm{x}_n, \{\mathbbm{a}\}_{a \in A}) \right] \right| \leq \frac{1}{m_0} \text{ for } k = 1, \dots, m_0.
		\]
		Since $m_0$ was arbitrary, $\varphi_k^{M^{/E,\cU}}(x_1,\dots,x_m,(a)_{a \in A}) = 0$ for all $k \in \N$, and therefore $\Gamma$ is satisfiable.
		
		It remains to remove the assumption that $\Gamma$ is countable.  Consider a general $\Gamma = (\varphi_i)_{i \in I}$ associated to the same countable set $A$.  By Lemma \ref{lem: separability}, choose a countable set of formulas $\Lambda$ that is dense with respect to $\norm{\cdot}_u$.  For each $i \in I$ and $m \in \N$, choose $\lambda_{i,m} \in \Lambda$ such that $\norm{\lambda_{i,m} - \varphi_i}_u \leq 1/m$.  Let
		\[
		\psi_{i,m} = \max(0, |\lambda_{i,m}| - 2/m).
		\]
		Since $\lambda_{i,m}$ comes from the countable set $\Lambda$, the set $\Gamma' = \{\psi_{i,m}: i \in I, m \in \N\}$ is countable.  Furthermore, note that for each $i \in I$,
		\[
		|\varphi_i| \leq \frac{1}{m} \implies \psi_{i,m} = 0 \implies |\varphi_i| \leq \frac{2}{m}.
		\]
		Thus, since $\Gamma$ is finitely approximately satisfiable, so is $\Gamma'$.  By the preceding argument $\Gamma'$ is satisfiable in $M$.  If $(x_1,\dots,x_n)$ satisfies $\Gamma'$, then it satisfies $|\varphi_i(x_1,\dots,x_m,(a)_{a \in A})| \leq 2/m$ for all $i \in I$ and $m \in \N$, and therefore, it satisfies $\Gamma$ as desired.
	\end{proof}
	
	Finally, we put the pieces together to complete the proof.
	
	\begin{proof}[Proof of Proposition \ref{prop: isomorphism}]
		Consider a direct integral $(M,\tau) = \int_\Omega^{\oplus} (M_\omega,\tau_\omega)\,d\mu(\omega)$.  Let $E: M \to L^\infty(\Omega,\mu)$ be the canonical conditional expectation.  First, we claim that
		\[
		\Th(M^{/E,\cU}) \in \overline{\{\Th(M_\omega): \omega \in \Omega\}}.
		\]
		Let $\ev_E^M: \mathcal{P}_{\varnothing} \to L^\infty(\Omega,\mu)$ be the evaluation map.  By Theorem \ref{thm: continuous Los}, $\Th(M^{/E,\cU})$ is the mapping $\mathcal{P}_{\varnothing} \to \R$ given by $\cU \circ \ev_E^M$.  For each $\varphi \in \mathcal{P}_{\varnothing}$, we have
		\[
		|\Th(M^{/E,\cU})[\varphi]| \leq \norm{\varphi_E^M}_{L^\infty(\Omega,\mu)} \leq \sup_{\omega \in \Omega} |\varphi^{M_\omega}| = \sup_{\omega \in \Omega} |\Th(M_\omega)[\varphi]|.
		\]
		Recall that $\varphi$ is equivalent to a continuous function on the compact Hausdorff space $\mathcal{X} = \Spec(\mathcal{P}_{\varnothing})$.  If $\mathcal{Y} \subseteq \mathcal{X}$ and we have $|f(x)| \leq \sup_{y \in \mathcal{Y}} |f(y)|$ for all $f \in C(\mathcal{X};\R)$, then $x \in \overline{\mathcal{Y}}$.  Since the theories are characters corresponding to point evaluations, this shows in our case that $\Th(M^{/E,\cU})$ is in the closure of $\{\Th(M_\omega): \omega \in \Omega\}$ as desired.
		
		Now since the space of theories is separable and metrizable, there exists a sequence $\omega_n$ such that $\Th(M_{\omega_n}) \to \Th(M^{/E,\cU})$.  Fix a non-principal ultrafilter $\cV$ on $\N$, and let $N = \prod_{\cV} (M_{\omega_n},\tau_{\omega_n})$.  By {\L}o{\'s}'s theorem, $\Th(N) = \lim_{n \to \cV} \Th(M_{\omega_n}) = \Th(M^{/E,\cU})$.
		
		By \cite[Lemma 4.4, proof of Proposition 4.3]{FG24}, we have that $N$ is atomic if and only if $M^{/E,\cU}$ is atomic; moreover, the sizes of the atoms in the decomposition and the dimensions of the matrix algebras are uniquely determined by the theory.  Thus, in the atomic case, we have $M^{/E,\cU} \cong N$.
		
		Otherwise, $M^{/E,\cU}$ and $N$ both have a diffuse summand.  By Proposition \ref{prop: density character}, the density character of $M^{/E,\cU}$ and of $N$ is the continuum $2^{\aleph_0}$.  Note that Proposition \ref{saturation-thm} implies that $M^{/E,\cU}$ and $N$ are countably saturated, i.e., $\aleph_1$-saturated.  By CH, $\aleph_1 = 2^{\aleph_0}$, and therefore $M^{/E,\cU}$ and $N$ are saturated, so by Proposition \ref{saturation-iso}, $M^{/E,\cU} \cong N$.
	\end{proof}
	
	\section{Random matrices and ultraproducts} \label{sec: RM}
	
	Our goal in this section is to describe how ultrafibers shed light on the relationship between random matrices and ultraproducts.  We denote by $\mathbb{M}_n$ the algebra of $n \times n$ complex matrices equipped with the normalized trace $\tr_n$.  Suppose that $X^{(n)} = (X_i^{(n)})_{i \in I}$ is a tuple of random $n \times n$ matrices on the probability space $([0,1],\Leb)$ and suppose that $\limsup_{n \to \infty} \norm{X_i^{(n)}} < \infty$ almost surely.  Fix also a free ultrafilter $\cU$ on $\mathbb{N}$.  Then in what sense does the sequence $X_i^{(n)}$ define a random element in the ultraproduct $\prod_{n \to \cU} \mathbb{M}_n$?
	
	For concreteness, assume that $X_i^{(n)}$ is an independent family of matrices from the Gaussian unitary ensemble (GUE) (for background, see e.g.\ \cite{AGZ2009,MingoSpeicher}).  For almost every $\omega \in [0,1]$, the sequence $(X_i^{(n)}(\omega))_{n \in \N}$ defines an element $X_i(\omega)$ in $\mathcal{Q} = \prod_{n \to \cU} \M_n$.  However, $\omega \mapsto X_i(\omega)$ is \emph{not} measurable in the usual sense.  Indeed, for $X_i(\omega)$ to be Bochner-measurable, it would have to almost surely take values in a separable subspace of $\mathcal{Q}$, which would imply there is some $L^2$ ball $B_r(Y)$ in $\mathcal{Q}$ such that $\{\omega: X_i(\omega) \in B_r(Y)\}$ has strictly positive measure with $r < 1$.  Write $Y = [Y^{(n)}]_{n \in \N}$.  Then this set can also be written as
	\begin{align*}
		\{\omega: \lim_{n \to \cU} \norm{X_i^{(n)} - Y^{(n)}}_2 < r\} &\subseteq \{\omega: \limsup_{n \to \cU} \norm{X_i^{(n)}(\omega) - Y^{(n)}}_2 < r\} \\
		&\subseteq \limsup_{n \to \infty} \{\omega: \norm{X_i^{(n)}(\omega) - Y^{(n)}}_2 < r\}
	\end{align*}
	Thus,
	\[
	\Leb(\omega: X_i(\omega) \in B_r(Y)) \leq \limsup_{n \to \infty} \Leb(\omega: \norm{X_i^{(n)}(\omega) - Y^{(n)}}_2 < r),
	\]
	which is zero because the Gaussian measure of the $r$-ball vanishes as $n \to \infty$ if $r < 1$.  It is difficult even to obtain \emph{weak}-$*$ measurability for $\omega \mapsto X_i(\omega)$ because the inner products with elements of $\mathcal{Q}$ are not ordinary limits of inner products with elements of $\M_n$, but rather \emph{ultralimits}.  Even if we could achieve weak-$*$ measurability, it would not be that helpful since the predual of the matrix ultraproduct is not separable; thus, many natural conditions we would want for the random matrix would end up producing uncountable unions and intersections.  Hence, great care has to be taken in arguments that combine probability and ultraproducts as in \cite{FJP2023,JP2024}.
	
	Therefore, rather than attempting to study $X_i$ as a map $[0,1] \to \prod_{n \to \cU} \M_n$, it seems much more natural to consider $X_i$ as an element of $\prod_{n \to \cU} (L^\infty[0,1] \otimes \M_n)$, which heuristically is more like a random variable on the non-separable probability space $([0,1],\Leb)^{\cU}$ corresponding to $L^\infty[0,1]^{\cU}$ than on the probability space $[0,1]$ itself.  However, it is not even a random variable over $([0,1],\Leb)^{\cU}$ in the ordinary sense.  And so, rather than trying to extract a deterministic element of $\prod_{n \to \cU} \M_n$ from $X_i$ by choosing a \emph{point} in some probability space, we can form an \emph{ultrafiber} of $\prod_{n \to \cU} (L^\infty[0,1] \otimes \M_n)$ using a pure state $\cV$ on its center $L^\infty[0,1]^{\cU}$.  As we will see below, the ultrafiber $M$ obtained from this process is \emph{larger} than the subalgebra $Q = \prod_{n \to \cU} \M_n \subseteq \prod_{n \to \cU} (L^\infty[0,1] \otimes \M_n)$ corresponding to the deterministic matrices.  In fact, the image of $X_i$ in $M$ will be freely independent of $Q$ (for background on free independence, see \cite{VDN1992,AGZ2009,MingoSpeicher}).
	
	\begin{prop}
		Let $\cU$ be a free ultrafilter on $\N$.  Let $N = \prod_{n \to \cU} (L^\infty[0,1] \otimes \M_n)$ and let $Q = \prod_{n \to \cU} \M_n \subseteq N$.  Let $\cV$ be a pure state on $Z(N) \cong L^\infty[0,1]^{\cU}$, let $E: N \to Z(N)$ be the trace-preserving conditional expectation, let $M = N^{/E,\cV}$ be the corresponding ultrafiber, and let $\pi_{E,\cV}: N \to M$ be the natural quotient map.
		
		Let $I$ be a countable index set and let $(X_i^{(n)})_{i \in I}$ be a family of independent GUE matrices on the probability space $([0,1],\Leb)$.  Let $f: \R \to \R$ be a bounded continuous function with $f|_{[-2,2]} = \id$, and let $X_i = [f(X_i^{(n)})]_{n \in \N} \in N$.  Then
		\begin{enumerate}[(1)]
			\item $\pi_{E,\cV}$ is injective on $Q$, and $\pi_{E,\cV}(Q)$ is an elementary submodel of $M$.
			\item $X_i$ does not depend on the particular choice of $f$.
			\item The elements $\pi_{E,\cV}(X_i)$ are freely independent of each other and of $\pi_{E,\cV}(Q)$.
			\item For each $k+\ell$-variable formula $\varphi$, indices $i_1$, \dots, $i_k \in I$, and elements $A_1$, \dots, $A_\ell \in (Q)_1$, the value of
			\[
			\varphi^{M}(\pi_{E,\cV}(X_{i_1}/2),\dots,\pi_{E,\cV}(X_{i_k}/2),\pi_{E,\cV}(A_1),\dots,\pi_{E,\cV}(A_\ell))
			\]
			does not depend on $\cV$ (it only depends on $\cU$).
		\end{enumerate}
	\end{prop}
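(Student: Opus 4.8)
The plan is to reduce every assertion to a computation of the $Z(N)$-valued interpretation $\varphi_E^N$ via {\L}o{\'s}'s theorem for ultrafibers (Theorem \ref{thm: continuous Los}), and then to invoke concentration of measure for the GUE to show that these central quantities are scalar. Two preliminary reductions set this up. Since each $\M_n$ is a factor, $Z(L^\infty[0,1]\otimes\M_n)=L^\infty[0,1]$ with center-valued trace $E_n=\id\otimes\tr_n$, so (as in Proposition \ref{prop: center of ultrafiber}) $Z(N)=L^\infty[0,1]^{\cU}$ and, by uniqueness of the trace-preserving expectation, $E$ acts coordinatewise, $E([x_n])=[E_n(x_n)]$. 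I would first record a ``coordinatewise {\L}o{\'s}'' identity: for $\overline{x}=[\overline{x}^{(n)}]\in N$,
\[
\varphi_E^N(\overline{x})=\bigl[\,\omega\mapsto\varphi^{\M_n}(\overline{x}^{(n)}(\omega))\,\bigr]_{n\to\cU}\in L^\infty[0,1]^{\cU}.
\]
This follows by an induction on complexity parallel to Proposition \ref{prop: discretization} — using that $[\,\cdot\,]$ respects coordinatewise products and continuous functional calculus, and that norm-one elements of $N$ lift coordinatewise (Proposition \ref{prop: ultrafiber-well-defined}) together with Proposition \ref{prop: maximizer} for the quantifier step — combined with the fiberwise formula of Lemma \ref{direct-integral-case} for the constant field $L^\infty[0,1]\otimes\M_n=\int_{[0,1]}^{\oplus}\M_n$.

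Part (2) comes first, as it makes $X_i$ well-defined. For two admissible functions $f,f'$ the difference vanishes on $[-2,2]$, so
\[
\norm{f(X_i^{(n)})-f'(X_i^{(n)})}_2^2\leq\norm{f-f'}_\infty^2\,\mathbb{E}\Bigl[\tfrac1n\#\{\text{eigenvalues of }X_i^{(n)}\notin[-2,2]\}\Bigr]\to 0,
\]
since the expected fraction of GUE eigenvalues outside the semicircle support $[-2,2]$ tends to $0$; hence $[f(X_i^{(n)})]=[f'(X_i^{(n)})]$. I may therefore fix $f$ to be the $1$-Lipschitz clip onto $[-2,2]$, so that $f(X_i^{(n)})$ has norm at most $2$ and $X_i/2\in(N)_1$.

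The crux is a concentration lemma: for $g_n(\omega)=\varphi^{\M_n}\bigl(f(X_{i_1}^{(n)}(\omega))/2,\dots,f(X_{i_k}^{(n)}(\omega))/2,A_1^{(n)},\dots,A_\ell^{(n)}\bigr)$ with the $A_j^{(n)}$ deterministic, I claim $\norm{g_n-\mathbb{E}g_n}_{L^2[0,1]}\to 0$. Writing $g_n=h_n\circ\Phi_n$, where $\Phi_n$ maps the underlying standard Gaussian vector to the tuple $(f(X_i^{(n)})/2)_i$, the GUE scaling makes $\Phi_n$ an $O(n^{-1})$-Lipschitz map from Euclidean distance into $\norm{\cdot}_2$ (using that $t\mapsto f(t)$ is operator-Lipschitz for the normalized $2$-norm on self-adjoints), while $h_n=\varphi^{\M_n}(\,\cdot\,,\overline{A}^{(n)})$ has a modulus of continuity \emph{independent of} $n$ by Lemma \ref{uniform-continuity}. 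Applying Gaussian isoperimetry to a median level set and enlarging it by radius $r=\delta(\epsilon)/O(n^{-1})\to\infty$ forces $g_n$ within $\epsilon$ of its median with probability tending to $1$; uniform boundedness by $\norm{\varphi}_u$ then yields the $L^2$-concentration. Consequently $\varphi_E^N=[g_n]_{n\to\cU}=c\cdot 1_{Z(N)}$ is scalar, with $c=\lim_{n\to\cU}\mathbb{E}g_n$ depending only on $\cU$. I expect this to be the main obstacle, precisely because formula interpretations are only uniformly continuous, not Lipschitz, so Gaussian Poincaré does not apply directly; it is the vanishing Lipschitz constant $O(n^{-1})$ of $\Phi_n$ that drives the isoperimetric enlargement.

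Granting the lemma, the parts follow quickly. For (4), Theorem \ref{thm: continuous Los} and the coordinatewise identity give
\[
\varphi^M(\pi_{E,\cV}(X_{i_1}/2),\dots,\pi_{E,\cV}(A_\ell))=\cV(\varphi_E^N(X_{i_1}/2,\dots,A_\ell))=\cV(c\cdot 1)=c,
\]
depending only on $\cU$. For (1), when all arguments lie in $Q$ the fiberwise value $\varphi^{\M_n}(\overline{a}^{(n)})$ is constant in $\omega$, so concentration is automatic and $\varphi_E^N(\overline{a})=\varphi^Q(\overline{a})\cdot 1$ by ordinary {\L}o{\'s}; thus $\varphi^M(\pi_{E,\cV}(\overline{a}))=\varphi^Q(\overline{a})$, which applied to $\varphi=\re\tr(x^*x)$ shows $\pi_{E,\cV}$ is $\norm{\cdot}_2$-isometric (hence injective) on $Q$, and for general $\varphi$ says exactly that $\pi_{E,\cV}(Q)\prec M$. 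For (3), trace-preservation and the concentration lemma applied to the polynomial formula $\tr(w)$ give, for a word $w$ alternating between centred $\pi_{E,\cV}(X_i)$'s and centred elements of $\pi_{E,\cV}(Q)$,
\[
\tau_{E,\cV}(\pi_{E,\cV}(w))=\lim_{n\to\cU}\mathbb{E}_\omega\tr_n(w^{(n)}(\omega)),
\]
and Voiculescu's asymptotic freeness of independent GUE matrices from deterministic matrices with convergent joint distribution (convergence along $\cU$, where the deterministic entries converge to the corresponding elements of $Q$) identifies this limit with the trace of the same alternating centred word in free semicirculars free from $Q$, namely $0$; this vanishing of mixed moments is precisely free independence.
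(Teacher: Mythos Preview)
Your proposal is correct and follows the same overall architecture as the paper: identify $\varphi_E^N$ fiberwise, show via Gaussian concentration that it is a scalar in $Z(N)$, and then read off everything from Theorem \ref{thm: continuous Los}. The implementation differs in three places, and it is worth noting what each choice buys.

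First, where the paper passes to the ambient direct integral $\tilde N = L^\infty[0,1]\,\overline{\otimes}\,\bigoplus_n \M_n$ and invokes Lemma \ref{direct-integral-case} together with $M \cong \tilde N^{/\tilde E,\cV\circ q}$, you instead prove a ``coordinatewise {\L}o{\'s}'' identity $\varphi_E^N([\overline{x}^{(n)}]) = [\varphi_{E_n}^{L^\infty\otimes\M_n}(\overline{x}^{(n)})]$ directly by induction, using Proposition \ref{prop: maximizer} in each coordinate for the quantifier step. Both are valid; the paper's route reuses existing lemmas verbatim, while yours isolates a statement that could be reused for other ultraproducts of direct integrals.

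Second, for (4) the paper sidesteps the non-Lipschitz issue by restricting to the $\norm{\cdot}_u$-dense class of rational polynomial formulas, which \emph{are} Lipschitz on the unit ball (closure under $\sup$/$\inf$ preserves a Lipschitz bound), and then applies standard Gaussian concentration. Your isoperimetry-plus-uniform-continuity argument handles all formulas at once and is the right move if one did not know a convenient Lipschitz dense class existed; the paper's shortcut is shorter but yours makes the mechanism (vanishing Lipschitz constant $n^{-1}$ of the Gaussian-to-matrix map beating any fixed modulus $\delta$) more transparent.

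Third, for (3) the paper shows $E[\text{word}]=0$ directly from \emph{almost sure} asymptotic freeness (the fiberwise trace converges to $0$ a.s., hence in $L^2$, hence represents $0$ in $L^\infty[0,1]^{\cU}$), never invoking concentration. You instead route through concentration to reduce to $\lim_{n\to\cU}\mathbb{E}_\omega\tr_n(w^{(n)})$ and then use asymptotic freeness in expectation. Both are standard; the paper's is one step shorter, while yours only needs the weaker expectation form of Voiculescu's theorem. A small point to tidy in your version: the centering constants in $M$ are the $\cU$-limits, not the finite-$n$ expectations, so you should note that the discrepancy vanishes in the limit (or simply expand and use linearity). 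Similarly, for (2) your ESD-in-expectation argument is correct and in fact uses a weaker input than the paper's almost-sure operator-norm convergence.
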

	
	\begin{proof}
		(1) Note that $N = \prod_{n \to \cU} (L^\infty[0,1] \otimes \M_n)$ is a quotient of
		\[
		\tilde{N} = \bigoplus_{n \in \N} L^\infty[0,1] \otimes \M_n \cong L^\infty[0,1] \overline{\otimes} \bigoplus_{n \in \N} \M_n;
		\]
		let $q$ denote the quotient map, and let $\tilde{E}: \tilde{N} \to Z(\tilde{N)} \cong L^\infty[0,1] \overline{\otimes} \ell^\infty(\N)$ be the center-valued trace.  As in the proof of Proposition \ref{prop: distribution ultrapower}, we have
		\[
		M = N^{/E,\cV} \cong \tilde{N}^{/\tilde{E},\cV \circ q}.
		\]
		Note also that $\tilde{N}$ can be viewed as a direct integral of $M_n(\C)$ over the probability space $\bigsqcup_{n \in \N} [0,1]$ corresponding to the algebra $L^\infty[0,1] \overline{\otimes} \ell^\infty(\N)$ (after fixing a probability measure on $\N$ with full support, which, however, will play no role in the argument).  For $j = 1$, \dots, $k$, let $(A_j^{(n)})_{n \in \N} \in \bigoplus_{n \in \N} \M_n$ be a sequence of matrices bounded in operator norm, and let
		\[
		\tilde{A}_j = 1_{L^\infty[0,1]} \otimes \bigoplus_{n \in \N} A_j^{(n)} \in \tilde{N}.
		\]
		Let $\varphi$ be a formula with $k$ free variables.  By considering these with respect to the direct integral decomposition over $L^\infty[0,1] \overline{\otimes} \ell^\infty(\N)$ and applying Lemma \ref{direct-integral-case}, we see that
		\[
		\varphi_{\tilde{E}}^{\tilde{N}}(\tilde{A}_1,\dots,\tilde{A}_k) = 1_{L^\infty[0,1]} \otimes \bigoplus_{n \in \N} \varphi^{\M_n}(A_1^{(n)},\dots,A_k^{(n)}).
		\]
		Then by Theorem \ref{thm: continuous Los}, we have
		\begin{align*}
			\varphi^{M}(\pi_{\tilde{E},\cV \circ q}(\tilde{A}_1),\dots, \pi_{\tilde{E},\cV \circ q}(\tilde{A}_k)) &= \cV \circ q[\varphi_{\tilde{E}}^{\tilde{N}}(\tilde{A}_1,\dots,\tilde{A}_k)] \\
			&= \cV \circ q \left( 1_{L^\infty[0,1]} \otimes \bigoplus_{n \in \N} \varphi^{\M_n}(A_1^{(n)},\dots,A_k^{(n)}) \right) \\
			&= \cV \circ q \left( \bigoplus_{n \in \N} 1_{L^\infty[0,1]} \otimes \varphi^{\M_n}(A_1^{(n)},\dots,A_k^{(n)}) \right) \\
			&= \cV \left( 1_{N} \lim_{n \to \cU} \varphi^{\M_n}(A_1^{(n)},\dots,A_k^{(n)}) \right) \\
			&= \lim_{n \to \cU} \varphi^{\M_n}(A_1^{(n)},\dots,A_k^{(n)}) \\
			&= \varphi^{Q}(A_1,\dots,A_k),
		\end{align*}
		where $A_j$ is the image of $[A_j^{(n)}]_{n \in \N}$ in $Q$.  Note that $\pi_{\tilde{E},\cV \circ q}(\tilde{A}_j) = \pi_{E,\cV}(A_j)$, we have shown that
		\[
		\varphi^M(\pi_{E,\cV}(A_1),\dots,\pi_{E,\cV}(A_k)) = \varphi^Q(A_1,\dots,A_k),
		\]
		for all formulas $\varphi$ and $A_j \in Q$.  Taking $\varphi$ to be the distance, we see that $\pi_{E,\mathcal{V}}|_Q$ is isometric, hence injective.  And since this relation in fact holds for all formulas $\varphi$, the mapping $Q \to M$ is an elementary embedding.
		
		(2) It is well known that for GUE matrices, we have $\lim_{n \to \infty} \norm{X_i^{(n)}} = 2$ almost surely.  Suppose $f$ and $g$ are two bounded continuous functions that both equal identity on $[-2,2]$.  Given $\epsilon > 0$, there exists $\delta > 0$ such that $|f - g| < \epsilon$ on $[-2-\delta,2+\delta]$.  Thus, when $\norm{X_i^{(n)}} \leq 2 + \delta$, which happens for sufficiently large $n$ almost surely, we have $\norm{f(X_i^{(n)}) - g(X_i^{(n)})} \leq \epsilon$.  Therefore, $\limsup_{n \to \infty} \norm{f(X_i^{(n)}) - g(X_i^{(n)})}_{L^2([0,1],\M_n)} \leq \epsilon$, and since $\epsilon$ was arbitary, the limit is zero.  This implies that $(f(X_i^{(n)}))_{n \in \N}$ and $(g(X_i^{(n)}))_{n \in \N}$ produce the same element of $N = \prod_{n \to \cU} (L^\infty[0,1] \otimes \M_n)$.
		
		(3) Recall that to show free independence of $(\pi_{E,\cV}(X_i))_{i \in I}$ and $(\pi_{E,\cV}(Q))$, we need to show that for non-commutative polynomials $p_1$, \dots, $p_k$ and for $A_1$, \dots, $A_k \in Q$, we have
		\[
		\tau_{M} \left[ \prod_{j=1}^k (p_j((\pi_{E,\cV}(X_i))_{i\in I}) - \tau_{M}[p_j((\pi_{E,\cV}(X_i))_{i\in I})]) (\pi_{E,\cV}(A_j) - \tau_{M}[\pi_{E,\cV}(A_j)]) \right] = 0,
		\]
		where the product is understood to run from left to right.  We also have to prove similar claims for alternating strings that start with $A_1 - \tau(A_1)$ or end with $p_k((X_i)_{i \in I}) - \tau(p_k((X_i)_{i \in I}))$, but the proof in these cases, of course, is symmetrical, so we focus on the first case written above.  Since $\pi_{E,\cV}: N \to M$ is a $*$-homomorphism and $\tau_{M} = \cV \circ E$, it suffices to show that
		\[
		E \left[ \prod_{j=1}^k (p_j((X_i)_{i\in I}) - E[p_j((X_i)_{i\in I})]) (A_j - E(A_j)) \right] = 0 \text{ in } N.
		\]
		The latter is the element of the ultraproduct $\prod_{n \to \cU} (L^\infty[0,1] \otimes \M_n)$ given by the sequence
		\[
		\tr_n \left[ \prod_{j=1}^k (p_j((f(X_i^{(n)}))_{i\in I}) - \tr_n[p_j((f(X_i^{(n)}))_{i\in I})]) (A_j^{(n)} - \tr_n(A_j^{(n)})) \right]
		\]
		where $(A_j^{(n)})_{n \in \N}$ is a sequence representing $A_j$ in $Q$.  By Voiculescu's asymptotic freeness theorem (see e.g.\ \cite[Theorem 5.4.5]{AGZ2009}), this sequence of random variables converges to zero almost surely.  Since they are bounded, they also converge to zero in $L^2$ as $n \to \infty$.  Therefore, this sequence of bounded random variables agrees with the zero element in $N$, as desired.
		
		(4) This follows by a concentration of measure argument similar to \cite[Lemma 4.2]{FJP2023}, so we will proceed briefly here.  By (2), assume without loss of generality that $|f| \leq 2$ everywhere.  For $A_1$, \dots, $A_\ell \in (Q)_1$ and for each $j$, lift $A_j$ to a sequence $(A_j^{(n)})_{n \in \N}$ where $A_j^{(n)} \in (\M_n)_1$.  It suffices to check the claim on a dense set of formulas, such as the rational polynomial formulas in Lemma \ref{lem: separability}.  These formulas are easily seen to be Lipschitz on the unit ball.  Hence,
		\[
		\varphi^{\M_n}(f(X_{i_1}^{(n)})/2,\dots,f(X_{i_k}^{(n)})/2,A_1^{(n)},\dots,A_\ell^{(n)})
		\]
		is a Lipschitz function of $(X_{i_1}^{(n)}, \dots, X_{i_k}^{(n)})$.  Let $a_n$ be its expectation.  Then by Gaussian concentration of measure (see e.g.~\cite[Lemma 2.3.3]{AGZ2009}), we have that
		\[
		\lim_{n \to \infty} \norm{\varphi^{\M_n}(f(X_{i_1}^{(n)})/2,\dots,f(X_{i_k}^{(n)})/2,A_1^{(n)},\dots,A_\ell^{(n)}) - a_n 1}_{L^2[0,1]} = 0.
		\]
		Therefore, the sequence $\varphi^{\M_n}(f(X_{i_1}^{(n)})/2,\dots,f(X_{i_k}^{(n)})/2,A_1^{(n)},\dots,A_\ell^{(n)})$ agrees in $L^\infty[0,1]^{\cU}$ with the constant $\lim_{n \to \cU} a_n$.  It follows by similar reasoning to (1) that
		\begin{align*}
			\varphi^{M}(&\pi_{E,\cV}(X_{i_1}/2),\dots,\pi_{E,\cV}(X_{i_k}/2),\pi_{E,\cV}(A_1),\dots,\pi_{E,\cV}(A_\ell)) \\
			&= \cV \left[ [\varphi^{\M_n}(f(X_{i_1}^{(n)})/2,\dots,f(X_{i_k}^{(n)})/2,A_1^{(n)},\dots,A_\ell^{(n)})]_{n \in \N} \right] \\
			&= \cV (1 \cdot \lim_{n \to \cU} a_n) \\
			&= \lim_{n \to \cU} a_n.
		\end{align*}
		This is clearly independent of $\cV$, and only depends on $\cU$ and $A_1$, \dots, $A_\ell$.
	\end{proof}

	\bibliographystyle{amsalpha}
	\bibliography{ultrafiber}
	
\end{document}